\newtheorem{theorem}{Theorem}[section]
\newtheorem{lemma}{Lemma}[section]
\newtheorem{corollary}{Corollary}[section]
\newtheorem{definition}{Definition}[section]
\newtheorem{remark}[theorem]{Remark}
\numberwithin{equation}{section}
\numberwithin{lemma}{section}
\begin{document}

\title{Deterministic-Statistical Approach for an Inverse Acoustic Source Problem using Multiple Frequency Limited Aperture Data}
\author{Yanfang Liu \thanks{Department of Mathematics, The George Washington University, Washington, DC 20052, U.S.A. ({\it yliu11@gwu.edu})}
\and Zhizhang Wu \thanks{Department of Mathematics, The University of Hong Kong, Pokfulam Road, Hong Kong SAR, China ({\it wuzz@hku.hk}).}
\and Jiguang Sun \thanks{Department of Mathematical Sciences, Michigan Technological University, Houghton, MI 49931, U.S.A. ({\it  jiguangs@mtu.edu}).}
\and Zhiwen Zhang \thanks{Department of Mathematics, The University of Hong Kong, Pokfulam Road, Hong Kong SAR, China ({\it zhangzw@hku.hk}).}}
\maketitle
\begin{abstract}
We propose a deterministic-statistical method for an inverse source problem using multiple frequency limited aperture far field data. The direct sampling method is used to obtain a disc such that it contains the compact support of the source. The Dirichlet eigenfunctions of the disc are used to expand the source function. Then the inverse problem is recast as a statistical inference problem for the expansion coefficients and the Bayesian inversion is employed to reconstruct the coefficients. The stability of the statistical inverse problem with respect to the measured data is justified in the sense of Hellinger distance. A preconditioned Crank-Nicolson (pCN) Metropolis-Hastings (MH) algorithm is implemented to explore the posterior density function of the unknowns. Numerical examples show that the proposed method is effective for both smooth and non-smooth sources given limited-aperture data.
\end{abstract}

{\bf Key words:} inverse source problem, direct sampling method, Bayesian inversion, eigenfunction expansion, limited-aperture data.

\bigskip

{\bf AMS subject classification:} 35R30, 62F15, 65N21

\section{Introduction}
In recent years, the inverse problem of determining an unknown source function has attracted significant attention due to its practical importance in many applications such as the biomedical imaging and the identification of pollution sources \cite{Isakov1990,Arridge1999, ELBadia2002, Davaney2007, Anastasio2007, Eller2009}. The reconstruction of the acoustic source using single frequency data is challenging. Inverse source problems at a fixed frequency do not possess a unique solution due to the existence of non-radiating sources \cite{Bleistein1977, Devaney1982}. For multiple-frequency data, the uniqueness of the inverse source problem is derived in \cite{Eller2009} for a chosen unbounded set of the Dirichlet eigenvalues of the Laplacian using near field data (see also \cite{bao2010multi} for the uniqueness with the measurements taken on a bounded band of frequency). The use of multiple frequency data improves the stability of the inverse source problem \cite{LiYuan2017IPI}. Accordingly, many researchers consider the reconstruction of an extended acoustic source problem using multiple frequency data. Various methods have been proposed  in the last decade including the continuation methods \cite{Bao2011CR, Bao2015SIAMNA}, eigenfunction expansion methods \cite{Eller2009, Zhang2015}, and sampling type methods \cite{Davaney2007, Griesmaier2017SIAMIS,Alzaalig2020IP,Liu2017IP}.

Bayesian statistics is a classical approach for inverse problems \cite{Fitzpatrick1991IP}. Due to the increase of the computational power, Bayesian inversion has been becoming more popular \cite{kaipio2006statistical, Stuart2010, BuiThanh2014, WangMaZheng2015, Yang2020IP}. Recently, focusing on partial data, we combined the deterministic methods and Bayesian inversion to successfully treat several inverse problems including an inverse scattering problem, an inverse source problem, and the reconstruction of moving point sources using limited-aperture data \cite{li2021quality, LiDengSun2020, LiuGuoSun2021}. In particular, we use certain deterministic method to obtain qualitative information of the unknowns. Such information is built into the priors for the Bayesian inversion, which is then used to compute more details of the unknowns. Both the deterministic method and the Bayesian inversion use the same measured data. Numerical results show that such a combination can provide better reconstructions.

In this paper, we propose a deterministic-statistical approach for an inverse source problem using multiple frequency limited aperture data. The direct sampling method is used to find the support of the source. A disc is identified such that the support of the source is contained in the disc. Using the Dirichlet eigenfunctions of the disc (Bessel's functions) as the basis, we expand the source function. These coefficients are the unknowns for the Bayesian inverse problem, whose posterior density function is explored using an M-H (Metropolis-Hastings) MCMC (Markov chain Monte Carlo) algorithm. The conditional mean (CM) is used to represent the solution. Numerical examples show that the proposed approach is effective for limited-aperture data.

The rest of the paper is organized as follows. In Section 2, we introduce the inverse acoustic source problem of interest. Section 3 presents the direct sampling method to reconstruct a disc that contains the support of the source. In Section 4, we first expand the unknown source using the Dirichlet eigenfunctions of the disc and propose a Bayesian approach to reconstruct the expansion coefficients. The proposed method is validated by various numerical examples in Section 5. Finally, we discuss the method and make some conclusions in Section 6.  

\section{The inverse source problem}
Let $\Omega$ be a bounded domain in $\mathbb{R}^2$ with a Lipschitz boundary $\partial \Omega$. We assume that $\mathbb{R}^2 \setminus {\overline{\Omega}}$ is connected. Let $u$ be the outgoing solution to the inhomogeneous Helmholtz equation in $\mathbb{R}^2$:
\begin{equation}\label{Helmholtz1}
\begin{split}
       &\Delta u(x,k)+k^2 u(x,k) =f(x), ~ x=(x_{1},x_{2}) \in \mathbb{R}^2, \\
       &\lim_{r\rightarrow \infty} \sqrt{r} \left(\frac{\partial u}{\partial r}-iku\right)=0, \qquad r=|x|,
\end{split}
\end{equation}
where $k \in K$, $K=[k_a, k_b]$, $0 < k_a < k_b$, is the wavenumber and $f(x)\in L^2(\Omega)$ with $\text{supp}f\subset \Omega$. Note that $k$ is proportional to the frequency.

There exists a unique solution $u$ to \eqref{Helmholtz1} given by (see \cite{colton2013inverse})
\begin{equation}\label{near_field}
u({x},k)=\int_{\Omega}\Phi_k(x,y) f(y)dy,
\end{equation}
where $\Phi_k(x,y)=-\frac{i}{4}H_{0}^{(1)}(k|x-y|)$ is the fundamental solution to the Helmholtz equation and $H_{0}^{(1)}$ denotes the zeroth-order Hankel function of the first kind. Furthermore, $u(x,k)$ has the asymptotic behavior \cite{colton2013inverse}
\begin{equation*}
u(x,k)=\frac{e^{i\frac{\pi}{4}}}{\sqrt{8k\pi}}
\frac{e^{ikr}}{\sqrt{r}}
\left\{u^{\infty}(\hat{x},k)+\mathcal{O}\left(\frac{1}{r}\right)\right\}
\quad \text{as}\; r\rightarrow\infty,
\end{equation*} where $\hat{x}={x}/{|x|}\in\mathbb{S}$, $\mathbb{S}:=\{|\hat{x}| = 1 : \hat{x} \in \mathbb R^2\}$.  The far field pattern $u^{\infty}(\hat{x},k)$ of $u(x,k)$ is given by
\begin{equation}\label{far_field}
u^{\infty}(\hat{x},k)=\int_{\Omega}\Phi^{\infty}_k(\hat{x},y) f(y)dy,
\end{equation}
where
\begin{equation}
\Phi^{\infty}_k(\hat{x},y)=\exp{(-ik\hat{x}\cdot y)}
\end{equation}
is the far field pattern of the fundamental solution $\Phi_k(x,y)$.

We are interested in the inverse source problem of determining the unknown source $f(x)$ from the partial measurement of the far field pattern $u^{\infty}(\hat{x},k)$ prescribed on the unit circle $\mathbb{S}$ for multiple $k$'s, i.e. reconstruct $f(x)$ from $U:=\{u^{\infty}(\hat{x},k)|\hat{x}\in \Gamma,k\in K \}$, where $\Gamma \subset \mathbb{S}$. In practice, the measurement data is usually discrete $u^{\infty}(\hat{x}_i,k_j)$ for
$\hat{x}_i \in \Gamma, i=1, 2, \ldots, I$ and $k_j \in K, j=1, 2, \ldots, J$.

We propose a deterministic-statistical approach to reconstruct the source function in two steps. Firstly, the direct sampling method (DSM) is applied to obtain a disc $\hat{B}$ which contains the compact support of the source function $f(x)$. Secondly, we expand $f(x)$ in terms of the Dirichlet eigenfunctions of $\hat{B}$ and employ the Bayesian statistics to recover the expansion coefficients. Note that, ideally, the disc $\hat{B}$ should be such that $\text{supp}f\subset \hat{B}$ and $\hat{B}\setminus \text{supp}f$ is not too large.

\section{Direct Sampling Method}
The direct sampling method was proposed in \cite{ito2012direct} to reconstruct small scattering objects. It is simple and effective to reconstruct the support of the unknown target (obstacle, inhomogeneous medium, source) and can process limited aperture data. Following \cite{li2021quality,Alzaalig2020IP}, for multiple frequency far field pattern, we employ the direct sampling method to determine a disc such that it contains the compact support of the source. It turns out that the DSM is effective to obtain a disc, which is important for the success of the Bayesian inversion.

Assume that a domain $D$ is known such that $\Omega \subset D$, i.e., the source function $f(x)$ lies inside $D$. Usually, $D$ is the region of interest and is quite large. Let $D$ be covered by a set of uniformly distributed sampling points $S$. For each point $x_{p} \in S$, we define an indicator function
\begin{equation}
    I(x_{p})=\frac{ | \sum _{k_j} \langle u^{\infty}(\hat{x},k_j),\Phi_{k_j}^{\infty}(\hat{x},x_{p}) \rangle_{L^2(\Gamma)} | }{  \sum _{k_j} \| u^{\infty}(\hat{x},k_j) \|_{L^2(\Gamma)}  \|\Phi_{k_j}^{\infty}(\hat{x},x_{p}) \|_{L^2(\Gamma)}  },
\end{equation}
where the inner product $ \langle \cdot,\cdot \rangle_{L^2(\Gamma)}$ is defined as
\begin{equation*}
    \langle u^{\infty}(\hat{x},k_j),\Phi_{k_j}^{\infty}(\hat{x},x_{p}) \rangle_{L^2(\Gamma)}=\int_{L^2(\Gamma)} u^{\infty}(\hat{x},k_j) \bar{\Phi} _{k_j}^{\infty}(\hat{x},x_{p}) ds(\hat{x})
\end{equation*}
and $\bar{\Phi} _{k_j}^{\infty}(\hat{x},x_{p})$ is the conjugate of ${\Phi} _{k_j}^{\infty}(\hat{x},x_{p})$. In the case of discrete data $u^{\infty}(\hat{x}_i,k_j), i=1, \ldots, I, j=1, \ldots, J$, the indicator function becomes
\begin{equation}\label{Ixp}
    I(x_{p})=\frac{ \sum _{j=1}^J |\sum_{i=1}^I u^{\infty}(\hat{x}_i,k_j) \cdot \overline{\Phi_{k_j}^{\infty}(\hat{x}_i,x_{p})}| }{  \sum _{j=1}^J  \sqrt{\sum_{i=1}^I |u^{\infty}(\hat{x}_i,k_j)|^2}  \sqrt{\sum_{i=1}^I|\Phi_{k_j}^{\infty}(\hat{x}_i,x_{p})|^2}}.
\end{equation}

The DSM uses the indicator function to obtain the support of $f(x)$ approximately. It is clear that $I(x_{p}) \in [0,1]$. If $ I(x_{p})$ is small (close to $0$), then the point $x_{p}$ is likely to lie outside the source. On the other hand, if $ I(x_{p})$ is large (close to $1$), $x_{p}$ is likely to lie inside the source. We refer to \cite{Liu2017IP} for some theoretical justification of the indicator function.

Based on the value of the indicator function, we are able to find a subdomain $\hat{B} \subset D$ containing the support of the source such that $I(x_{p})$ is larger than a cutoff value $\gamma$ for $x_p \in \hat{B}$. In particular, we will take $\hat{B}$ as a disc with radius $R$. The radius $R$ is given by
\begin{equation}\label{radius}
    R=\max_{ x_{p}\in D, I(x_{p}) \geq \gamma } \|x_{p}\|,
\end{equation}
The motivation to use a disc $\hat{B}$ is two folds. Firstly, a disc can easily cover the compact support of the source. Secondly, the Dirichlet eigenfunctions for a disc are known. Note that a square/rectangle domain also works.

The algorithm for multiple frequency limited aperture inverse source problems (MFLAISP) is as follows.
\vskip 2mm
{\bf DSM for MFLAISP}
\begin{itemize}
 \item[1.] Collect the data $u^{\infty}(\hat{x}_i,k_j), i=1, \ldots, I, j=1, \ldots, J$ for $x_i \in \Gamma$ and $k_j\in K$.
 \item[2.] Generate sampling points set $S$ for $D$.
 \item[3.] For each $x_p \in S$, compute ${I}(x_p)$ using \eqref{Ixp}.
 \item[4.] Identify a disc $\hat{B}$ using $I(x_p)$ with radius $R$ given by \eqref{radius}.
\end{itemize}
We remark that other deterministic methods such as the orthogonality sampling method and extended sampling method \cite{Griesmaier2011IP, LiuSun2018} can also be used as long as such a method can provide a good prediction of a disc (or a square) that contains the compact support of the source.

\section{Bayesian Inversion}
We expand the source using the Dirichlet eigenfunctions of $\hat{B}$ obtained by the DSM, and use Bayesian inversion to explore the posterior density function of the expansion coefficients. In particular, we shall construct an approximation $f_{BE}$ for the source $f$ in a finite-dimensional subspace spanned by the Dirichlet eigenfunctions of $\hat{B}$.

Let $\|\cdot\|$ be the usual $L^2$-norm.
The Dirichlet eigenvalue problem (see, e.g., \cite{SunZhou2016}) is to find $\lambda$ and nontrivial $w$ such that
\begin{equation} \label{Dirichleteignfunc}
\begin{split}
    -\Delta w &= \lambda w \quad {\text{in}}~ \hat{B}, \\
    w&=0 \quad  \text{on} ~\partial \hat{B}.
\end{split}
\end{equation}
We call $\lambda$ the Dirichlet eigenvalue and $w$ the eigenfunction corresponding to $\lambda$. All the eigenvalues are positive and have no finite point of accumulation. Since the Dirichlet eigenfunctions are associated to an elliptic self-adjoint compact operator on $L^2(\hat{B})$, $\{w_{n}\}_{n=1}^\infty$ forms a complete orthonormal set \cite{Eller2009}.  Consequently, one can expand $f$ as
\begin{equation}
    f(x)= \sum_{n=1}^\infty A_{n}w_{n},
\end{equation}
where the Fourier coefficients are given by \[
A_{n}=\int _{\hat{B}} f(x)w_{n}(x)~ dx.
\]

Using polar coordinate $x=(r \cos(\theta), r\sin(\theta))$, the Dirichlet eigenfunctions of a disc $\hat{B}$ centered at the origin with radius $R$ are given by \cite{Eller2009}
\begin{equation*}
\begin{split}
  & Q_{mn}^{1}(x)=\frac{1}{\sqrt{\pi}RJ_{n+1}(q_{mn})}J_{n}\left(\frac{q_{mn}r}{R}\right)\cos{(n\theta)},\quad m=1,2,3,\cdots, n=0,1,2,\cdots, \\
  & Q_{mn}^{2}(x)=\frac{1}{\sqrt{\pi}RJ_{n+1}(q_{mn})}J_{n}\left(\frac{q_{mn}r}{R}\right)\sin{(n\theta)},\quad m=1,2,3,\cdots, n=1,2,\cdots,
\end{split}
\end{equation*}
where $J_{n}$ is the Bessel function of order $n$ and $q_{mn}$ is the $m$th zero of $J_{n}$. These eigenfunctions satisfy
\begin{equation*}
    \Delta Q_{mn}^{j} + k_{mn}^2 Q_{mn}^{j} =0, \quad j=1,2,
\end{equation*}
with wavenumber $k_{mn}=q_{mn}/R $.

An approximation $f_{BE}$ of $f$ on the disc $\hat{B}$ is given by
\begin{equation}\label{eigenfuncexpansion}
    f_{BE}(x)= \sum_{m=1} ^M \left( \sum_{n=0}^N A_{mn}^{1} Q_{mn}^{1}(x)+\sum_{n=1}^N A_{mn}^{2} Q_{mn}^{2}(x) \right),
\end{equation}
where
\[
A_{mn}^1 =\int_{\hat{B}} f(x) Q_{mn}^1(x) dx , \quad A_{mn}^2 = \int_{\hat{B}} f(x) Q_{mn}^1(x) dx.
\]

 Denote $ H^{s}(\hat{B})$ the Sobolev space of order $s>0$ equipped with the standard norm $\| \cdot \|_s$. Moreover, $ H^{s}_{0}(\hat{B})$ is defined as the closure of $C_{0}^\infty(\hat{B})$ with respect to the norm in $ H^{s}(\hat{B})$.  The property of the Dirichlet eigenfunction expansion is stated in the following lemma \cite{Eller2009}.
\begin{lemma}
Let $f\in H^{s}_{0}(\hat{B})$ with $s>1$. Furthermore, let $\{w_{n}\}_{n=1}^\infty$ be the set of normalized Dirichlet eigenfunctions of $\hat{B}$. There exists a constant $C$ depending only on  $\hat{B}$ such that
\begin{equation}
    \| f- f_{BE}\| \leq C \|f \|_{s} N^{(1-s)/2}.
\end{equation}
\end{lemma}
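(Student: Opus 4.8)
The plan is to phrase everything in the orthonormal basis $\{w_n\}_{n=1}^\infty$ and reduce the estimate to the decay of the Fourier coefficients $A_n=\langle f,w_n\rangle$. Since $f_{BE}$ is exactly the truncation of $f=\sum_{n\ge1}A_nw_n$ to its first $N$ modes, orthonormality (Parseval) immediately gives $\|f-f_{BE}\|^2=\sum_{n>N}|A_n|^2$, so the whole lemma comes down to two quantitative inputs: a per-coefficient decay rate governed by the Sobolev regularity of $f$, and the growth rate of the eigenvalues $\lambda_n$ that converts that decay into decay in the index $n$.

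The core estimate I would establish is the bound $|A_n|\le C\lambda_n^{-s/2}\|f\|_s$. The mechanism is Green's identity together with the eigenvalue equation \eqref{Dirichleteignfunc}: because $-\Delta w_n=\lambda_n w_n$ and $w_n=0$ on $\partial\hat B$, integrating by parts trades each factor of $\lambda_n$ for two derivatives landing on $f$, the boundary contributions vanishing thanks to the homogeneous ($H^s_0$) conditions on $f$. Carrying this out for integer orders and interpolating between them yields the spectral characterization $\sum_n\lambda_n^{s}|A_n|^2\le C\|f\|_s^2$ of the Dirichlet Sobolev space; in particular each individual term obeys $\lambda_n^{s}|A_n|^2\le C\|f\|_s^2$, which is the claimed pointwise decay.

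Next I would feed in the eigenvalue asymptotics. In two dimensions Weyl's law states that the Dirichlet counting function satisfies $\#\{n:\lambda_n\le\lambda\}\sim\frac{|\hat B|}{4\pi}\lambda$, so $\lambda_n\asymp n$ with constants depending only on $\hat B$. Hence $|A_n|\le C n^{-s/2}\|f\|_s$, and the tail becomes a $p$-series:
\[
\|f-f_{BE}\|^2=\sum_{n>N}|A_n|^2\le C\|f\|_s^2\sum_{n>N}n^{-s}\le C\|f\|_s^2\,\frac{N^{1-s}}{s-1}.
\]
Taking square roots gives $\|f-f_{BE}\|\le C\|f\|_s N^{(1-s)/2}$, with $C$ depending only on $\hat B$ (through $|\hat B|$ and the Weyl constant). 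It is precisely here that the hypothesis $s>1$ is used: it is exactly the threshold guaranteeing convergence of $\sum_n n^{-s}$ and producing the finite tail bound $\sim N^{1-s}$. (I note in passing that extracting instead the single smallest excluded eigenvalue from the weighted sum would give the sharper rate $N^{-s/2}$; the route above is the one matching the stated exponent.)

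The step I expect to be the main obstacle is the spectral characterization $\sum_n\lambda_n^{s}|A_n|^2\le C\|f\|_s^2$ for a general real exponent $s>1$ rather than an even integer. For non-integer $s$ the quantity $\sum_n\lambda_n^{s}|A_n|^2$ is the squared norm of the domain of the fractional power $(-\Delta)^{s/2}$ of the Dirichlet Laplacian, and this domain coincides with $H^s_0(\hat B)$ only after one accounts for the boundary conditions that successive powers of $-\Delta$ implicitly impose on the eigenfunctions (the $w_n$ fail to lie in $H^2_0$ because their normal derivatives do not vanish). Reconciling $H^s_0$ regularity with this fractional domain, via interpolation and trace considerations, and absorbing the resulting constant into $C$ is the delicate part; the remaining ingredients (Parseval, Green's identity, and Weyl's law) are standard, and since the lemma is quoted from \cite{Eller2009} I would cross-check the final constant and exponent against their derivation.
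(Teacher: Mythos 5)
The paper does not prove this lemma at all: it is stated verbatim as a quoted result from \cite{Eller2009}, so there is no in-paper argument to compare against. Your proof is the standard one and, as far as I can check it against the cited source, correct: Parseval reduces the claim to the tail $\sum_{n>N}|A_n|^2$, the Green's-identity/interpolation argument gives $\sum_n\lambda_n^{s}|A_n|^2\leq C\|f\|_s^2$ (this is where $f\in H^s_0(\hat B)$, rather than merely $H^s$, is needed to kill the boundary terms), Weyl's law in two dimensions gives $\lambda_n\asymp n$ with constants depending only on $|\hat B|$, and the $p$-series tail $\sum_{n>N}n^{-s}\lesssim N^{1-s}/(s-1)$ is exactly where $s>1$ enters and produces the stated exponent $(1-s)/2$. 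You are right to single out the embedding $H^s_0(\hat B)\hookrightarrow D\bigl((-\Delta_D)^{s/2}\bigr)$ for non-integer $s$ as the only genuinely delicate step; it does hold here because $H^s_0$ imposes at least as many vanishing boundary traces as the fractional domain requires, and the resulting constant depends on $\hat B$ (and on $s$, which the lemma's statement quietly absorbs into $C$). Your parenthetical remark is also correct: weighting the whole tail by $\lambda_{N+1}^{-s}$ instead of using the per-coefficient bound yields the sharper rate $N^{-s/2}$, so the quoted estimate is not optimal, but your cruder route is the one that reproduces the exponent as stated.
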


\begin{remark}
Here we choose a disc $\hat{B}$ since the Dirichlet eigenfunctions are known analytically. One can also use rectangular domains containing the support of the source. If a general domain, e.g., a polygon, is used, the Dirichlet eigenfunctions can be computed using numerical methods such as the finite element methods (see, e.g., \cite{SunZhou2016}).
\end{remark}

Let $A$ be the vector $\{A_{m0}^{1},A_{mn}^{1},A_{mn}^{2}\}_{n=1,m=1}^{N,M }$ and $X$ be the vector space $\mathbb{R}^{(2N+1)M}$. The inverse problem of the reconstruction of the source function becomes the determination of the coefficients $A \in X$ given the measurement $U$.

Based on the eigenfunction expansion \eqref{eigenfuncexpansion}, we employ the Bayesian inversion to reconstruct $A$ for the source $f(x)$ from the measurement data \cite{kaipio2006statistical, Stuart2010}. The statistical model of the inverse source problem can be written as
\begin{equation} \label{statistical model}
   U=\mathcal{F}(A)+\eta,
\end{equation}
where $\mathcal{F}(A)=\int_{\hat{B}}\Phi^{\infty}_k(\hat{x},y) f_{BE}(y)dy$ and $\eta \sim \mathcal{N}(0,\sigma^2 \mathbb{I})$ is the Gaussian noise.

Using Bayes' formula \cite{kaipio2006statistical,Stuart2010}, the posterior density of the random variable $A$ satisfies
\begin{equation} \label{Bayes rule}
        \pi(A|U) \propto \pi(U|A)\pi(A),\\
\end{equation}
where $\propto$ means ``proportional to'', $\pi(A)$ represents the prior density of the unknown $A$, the conditional distribution $\pi(U|A)=\mathcal{N}(U-\mathcal{F}(A),\sigma^2 \mathbb{I})$ is the likelihood function, and the posterior distribution $\pi(A|U)$ is solution to the Bayesian inverse problem. To represent the statistical information of the unknown $A$, point estimators are often used, e.g., the conditional mean (CM)
\[
A_{\text{CM}}=\mathbb{E}(\pi(A|U)).
\]

We now analyze the stability of the Bayesian inverse problem. Define
\[
G(A;U)=\frac{1}{2\sigma^2}\|U-\mathcal{F}(A)\|^2_{L^2(\Gamma)}.
\]
The relationship \eqref{Bayes rule} in terms of measures $\mu^{U}$ and $\mu_{0}$ corresponding to posterior and prior densities can be written as
 \begin{equation}
     \frac{d \mu^{U}}{ d \mu_{0}}(A)=\frac{1}{L(U)}  \exp \left( -G(A;U)  \right),
 \end{equation}
 where $L(U)=\int_{X} \exp \left(-G(A;U)  \right) \mathrm{d} \mu_{0}(A)$ is the normalization constant.

\begin{lemma}
For integer values of $n$, the Bessel function of the first kind $ J_{n}(y)$ can be defined by the Hansen-Bessel Formula \cite{Ito1993}
\begin{equation}\label{Hansen-Bessel Formula}
    J_{n}(y)=\frac{1}{\pi} \int_{0}^{\pi} \cos(y \sin t -nt) dt.
\end{equation}
\end{lemma}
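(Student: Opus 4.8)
The plan is to identify the integral on the right-hand side of \eqref{Hansen-Bessel Formula} as the real part of a Fourier coefficient of the function $t \mapsto e^{iy\sin t}$, and to use the Jacobi--Anger (generating-function) expansion of Bessel functions to evaluate that coefficient.

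First I would recall the generating function $e^{\frac{y}{2}(z - z^{-1})} = \sum_{n=-\infty}^{\infty} J_{n}(y)\, z^{n}$, valid for $z \neq 0$, and substitute $z = e^{it}$. Since $z - z^{-1} = 2i\sin t$, this yields the expansion $e^{iy\sin t} = \sum_{n=-\infty}^{\infty} J_{n}(y)\, e^{int}$, which is precisely the Fourier series of the $2\pi$-periodic function $t \mapsto e^{iy\sin t}$. Reading off the $n$-th Fourier coefficient then gives $J_{n}(y) = \frac{1}{2\pi}\int_{-\pi}^{\pi} e^{i(y\sin t - nt)}\,dt$.

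Next I would separate the real and imaginary parts. Because $n$ is an integer and $y$ is real, $J_{n}(y)$ is real, so only the real part survives, i.e. $J_{n}(y) = \frac{1}{2\pi}\int_{-\pi}^{\pi}\cos(y\sin t - nt)\,dt$; the imaginary part $\int_{-\pi}^{\pi}\sin(y\sin t - nt)\,dt$ vanishes because its integrand is odd under $t \mapsto -t$ (using $\sin(-t)=-\sin t$). Conversely, the cosine integrand is \emph{even} under $t \mapsto -t$, so I can fold the integral over $[-\pi,\pi]$ onto $[0,\pi]$, which replaces the prefactor $1/(2\pi)$ by $1/\pi$ and delivers exactly \eqref{Hansen-Bessel Formula}.

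The one step that needs genuine care is the passage from the generating function to a term-by-term reading of the Fourier coefficient: one must justify interchanging the infinite sum with integration against $e^{-int}$. The hard part is therefore establishing the required convergence of the Jacobi--Anger series — for instance by observing that $\sum_{n} |J_{n}(y)|$ converges for fixed $y$ (the coefficients decay faster than any power of $n$), so that $|J_{n}(y)e^{int}| = |J_{n}(y)|$ gives absolute and uniform convergence in $t$ via the Weierstrass M-test, legitimizing the term-by-term integration. Everything else reduces to an elementary parity argument.
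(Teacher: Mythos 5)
Your derivation is correct and complete. Note, however, that the paper offers no proof of this lemma at all: the Hansen--Bessel formula is simply quoted as a known definition with a citation to an encyclopedic reference, so there is no in-paper argument to compare against. Your route --- substituting $z=e^{it}$ into the generating function $e^{\frac{y}{2}(z-z^{-1})}=\sum_{n=-\infty}^{\infty}J_{n}(y)z^{n}$ to obtain the Jacobi--Anger expansion, reading off $J_{n}(y)$ as the $n$-th Fourier coefficient of $t\mapsto e^{iy\sin t}$, and then using the parity of $\cos(y\sin t-nt)$ and $\sin(y\sin t-nt)$ under $t\mapsto -t$ to discard the imaginary part and fold $[-\pi,\pi]$ onto $[0,\pi]$ --- is the standard textbook derivation and each step is valid. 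You are also right to flag the term-by-term integration as the only point requiring justification, and the Weierstrass M-test via the rapid decay of $|J_{n}(y)|$ in $n$ (for fixed $y$) handles it; alternatively one could note that the Laurent series of the entire function $z\mapsto e^{\frac{y}{2}(z-z^{-1})}$ converges uniformly on the compact annulus $|z|=1$, which gives the interchange directly. In short, you have supplied a self-contained proof where the paper relies on a citation.
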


We prove a property of the operator $\mathcal{F}$ following \cite{Stuart2010}.
\begin{lemma} \label{Lemma1}
There exists a constant $C$ such that, for all $A\in X$,
\begin{equation*}\label{F_property 1}
\|\mathcal{F}(A)\|_{L^2(\Gamma)}\leqslant C\|A\|_{1}.
\end{equation*}
\end{lemma}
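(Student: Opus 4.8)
The plan is to exploit the fact that the forward map $\mathcal{F}$ depends \emph{linearly} on the coefficient vector $A$, and then to estimate the resulting far field using only the trivial pointwise bound $|\Phi_k^\infty(\hat x,y)|=|e^{-ik\hat x\cdot y}|=1$. Substituting the expansion \eqref{eigenfuncexpansion} into the definition of $\mathcal{F}$, I would first record
\[
\mathcal{F}(A)(\hat x,k)=\int_{\hat B}e^{-ik\hat x\cdot y}f_{BE}(y)\,dy
=\sum_{m=1}^M\Bigl(\sum_{n=0}^N A_{mn}^1 g_{mn}^1(\hat x,k)+\sum_{n=1}^N A_{mn}^2 g_{mn}^2(\hat x,k)\Bigr),
\]
where $g_{mn}^j(\hat x,k):=\int_{\hat B}e^{-ik\hat x\cdot y}Q_{mn}^j(y)\,dy$ is the far field radiated by a single eigenfunction. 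The whole estimate therefore reduces to controlling the $g_{mn}^j$ uniformly in $\hat x\in\Gamma$ and $k\in K$.

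The cleanest route avoids the mode-by-mode computation. Since the Dirichlet eigenfunctions form an $L^2(\hat B)$-orthonormal set, one has $\|f_{BE}\|_{L^2(\hat B)}^2=\sum_{m,n,j}|A_{mn}^j|^2=\|A\|_{\ell^2}^2$. Applying Cauchy--Schwarz in $y$ together with $|e^{-ik\hat x\cdot y}|=1$ gives, for every $(\hat x,k)$,
\[
|\mathcal{F}(A)(\hat x,k)|\leq|\hat B|^{1/2}\,\|f_{BE}\|_{L^2(\hat B)}=\sqrt{\pi}\,R\,\|A\|_{\ell^2}.
\]
Integrating the square over $\Gamma$ (and over $K$, if the data norm also ranges over the wavenumbers) then yields $\|\mathcal{F}(A)\|_{L^2(\Gamma)}\leq\sqrt{\pi|\Gamma|}\,R\,\|A\|_{\ell^2}$, and since $\|A\|_{\ell^2}\leq\|A\|_{1}$ the claim follows with a constant $C$ depending only on $\hat B$ and $\Gamma$.

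Alternatively---and this is presumably the intended role of the Hansen--Bessel formula \eqref{Hansen-Bessel Formula}---one can bound each $g_{mn}^j$ explicitly. Passing to polar coordinates $y=(\rho\cos\phi,\rho\sin\phi)$ and $\hat x=(\cos\alpha,\sin\alpha)$, the Jacobi--Anger expansion collapses the angular integral to a single term proportional to $(-i)^n J_n(k\rho)\cos(n\alpha)$ (respectively $\sin(n\alpha)$), leaving the radial overlap $\int_0^R J_n(q_{mn}\rho/R)\,J_n(k\rho)\,\rho\,d\rho$. The Hansen--Bessel formula furnishes the uniform bound $|J_n(y)|\leq 1$, so this integral is at most $R^2/2$ and hence $|g_{mn}^j(\hat x,k)|\leq\sqrt{\pi}\,R/|J_{n+1}(q_{mn})|$, uniformly in $\hat x$ and $k$. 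Summing over the finitely many modes via the triangle inequality and Cauchy--Schwarz again converts $\|A\|_{\ell^2}$ into $\|A\|_1$.

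The estimate itself is soft: because $\mathcal{F}$ is linear into $L^2$, boundedness is automatic, and the only quantitative inputs are $|e^{-ik\hat x\cdot y}|=1$ and (for the explicit route) $|J_n|\leq 1$. The one point requiring care is the dependence of the constant. In the explicit computation the normalization factors $1/|J_{n+1}(q_{mn})|$ grow with the mode index, since $J_{n+1}(q_{mn})\to 0$ along the zeros of $J_n$; one must therefore either keep $N,M$ fixed or argue through orthonormality as above in order to obtain a constant that does not degenerate. I consequently expect the orthonormality argument to be the safest path, with the Hansen--Bessel bound reserved for the companion Lipschitz/continuity estimate that underlies the stability of the posterior.
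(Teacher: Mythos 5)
Your proposal is correct, and your \emph{primary} route is genuinely different from the paper's. The paper proves the lemma by the mode-by-mode estimate that you relegate to the ``alternative'': it pulls the sum out of the integral, bounds $\bigl|\int_{\hat B}e^{-ik\hat x\cdot y}Q_{mn}^j(y)\,dy\bigr|$ by $\pi R^2\sup_y|Q_{mn}^j(y)|$, and then invokes the Hansen--Bessel formula to get $|J_n|\leq 1$ and hence $\sup|Q_{mn}^j|\leq \bigl(\sqrt{\pi}R\,|J_{n+1}(q_{mn})|\bigr)^{-1}$, arriving at $\|\mathcal{F}(A)\|_{L^2(\Gamma)}\leq C\|A\|_1$ with $C$ built from the normalization factors $1/J_{n+1}(q_{mn})$. (The paper does not even need the Jacobi--Anger reduction you sketch; it simply majorizes the exponential by $1$ inside the integral.) Your orthonormality-plus-Cauchy--Schwarz argument buys a cleaner constant $\sqrt{\pi|\Gamma|}\,R$ that depends only on $\hat B$ and $\Gamma$ and not on the mode indices, whereas the paper's constant, as written, leaves the index $(n,m)$ dangling outside the sum and implicitly relies on taking a maximum of $1/|J_{n+1}(q_{mn})|$ over the finitely many retained modes --- exactly the degeneration you flag. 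Since $X=\mathbb{R}^{(2N+1)M}$ is finite-dimensional with $N,M$ fixed, both constants are finite and both proofs are valid; your observation that the orthonormality route is the one that survives if $N,M$ were allowed to grow is a genuine improvement in robustness over the paper's argument.
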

\begin{proof}
From the Fourier-Bessel expansion \eqref{eigenfuncexpansion} and the definition of $\mathcal{F}(A)$, we have
\begin{equation}
    \begin{split}
        |\mathcal{F}(A)| &= \left| \int_{\hat{B}}\Phi^{\infty}_k(\hat{x},y)  \sum_{n,m} A_{nm} Q_{nm}(y) ~dy  \right|\\
        &\leq \sum_{n,m} |A_{nm}| \left| \int_{\hat{B}}\exp(-ik\hat{x}\cdot y)  Q_{nm}(y) ~dy  \right|.\\
    \end{split}
\end{equation}
For simplicity, we consider $\hat{B}=B(0,R)$, the disc centered at the origin with radius $R$ for the proof. The case for a general $\hat{B}$ is similar.  It is clear that
\begin{equation}\label{F inequality}
        |\mathcal{F}(A)|\leq \sum_{n,m}  \pi R^2  |A_{nm}|  |Q_{nm}(y)|.\\
\end{equation}

According to \eqref{Hansen-Bessel Formula}, for $y \in B(0,R)$, we have that
\begin{equation}
       |J_{n}(y)|= \left|\frac{1}{\pi} \int_{0}^{\pi} \cos(y\sin t -nt) dt\right|\leq 1,
\end{equation}
which implies that
\begin{equation} \label{Dirichlet eigenfunc bound}
   | Q_{nm}(y)| \leq \frac{1}{\sqrt{\pi}RJ_{n+1}(q_{mn})}.
\end{equation}
Combining \eqref{F inequality} and \eqref{Dirichlet eigenfunc bound}, we obtain
\begin{equation}
\begin{split}
     |\mathcal{F}(A)| & \leq\frac{ \sqrt{\pi} R}{J_{n+1}(q_{mn})}  \sum_{n,m}    |A_{nm}|.\\
\end{split}
\end{equation}
Consequently, we have that
\begin{equation}
     \left\|\mathcal{F}(A)\right\|_{L^2(\Gamma)} \leq \frac{ \sqrt{2}\pi R}{J_{n+1}(q_{mn})}  \sum_{n,m}    |A_{nm} |=C \|A\|_{1},
\end{equation}
where $C=\frac{ \sqrt{2}\pi R}{J_{n+1}(q_{mn})}$.
\end{proof}
\begin{corollary}
For all $A_{1},A_{2}\in X$, there exists a constant $C$, such that
\begin{equation*}\label{F_property 2}
\|\mathcal{F}(A_1)-\mathcal{F}(A_2)\|_{L^2(\Gamma)} \leqslant
C \|A_1-A_2\|_1.
\end{equation*}
\end{corollary}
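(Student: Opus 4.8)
The plan is to reduce the claimed Lipschitz estimate to the already-established bound in \autoref{Lemma1} by exploiting the fact that the forward operator $\mathcal{F}$ is \emph{linear} in the coefficient vector $A$. This linearity is the crucial structural observation: from the statistical model \eqref{statistical model} we have $\mathcal{F}(A)=\int_{\hat{B}}\Phi^{\infty}_k(\hat{x},y) f_{BE}(y)\,dy$, and the approximate source $f_{BE}$ enters through the Fourier--Bessel expansion \eqref{eigenfuncexpansion} as a finite linear combination $f_{BE}(y)=\sum_{n,m}A_{nm}Q_{nm}(y)$ whose coefficients are exactly the entries of $A$. Since integration against the kernel $\Phi^{\infty}_k(\hat{x},y)$ is a linear functional of $f_{BE}$, the map $A\mapsto\mathcal{F}(A)$ is linear.

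First I would record this linearity explicitly, so that for any $A_1,A_2\in X$,
\begin{equation*}
\mathcal{F}(A_1)-\mathcal{F}(A_2)=\int_{\hat{B}}\Phi^{\infty}_k(\hat{x},y)\,\big(f_{BE}^{(1)}(y)-f_{BE}^{(2)}(y)\big)\,dy=\mathcal{F}(A_1-A_2),
\end{equation*}
where $f_{BE}^{(1)},f_{BE}^{(2)}$ are the expansions associated with $A_1,A_2$, and the last equality uses that the coefficient vector of $f_{BE}^{(1)}-f_{BE}^{(2)}$ is precisely $A_1-A_2$. Because $X=\mathbb{R}^{(2N+1)M}$ is a vector space, the difference $A_1-A_2$ is again an admissible element of $X$, so $\mathcal{F}$ may be legitimately applied to it.

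Next I would invoke \autoref{Lemma1} with the argument $A_1-A_2$ in place of $A$. That lemma supplies a constant $C$, independent of the coefficient vector, with $\|\mathcal{F}(B)\|_{L^2(\Gamma)}\leq C\|B\|_1$ for every $B\in X$; taking $B=A_1-A_2$ and combining with the displayed identity yields
\begin{equation*}
\|\mathcal{F}(A_1)-\mathcal{F}(A_2)\|_{L^2(\Gamma)}=\|\mathcal{F}(A_1-A_2)\|_{L^2(\Gamma)}\leq C\,\|A_1-A_2\|_1,
\end{equation*}
which is exactly the assertion of the corollary, with the same constant $C$.

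I do not expect a genuine obstacle here: all of the analytic work (the Hansen--Bessel bound $|J_n|\le 1$, the control of the far-field kernel on the disc, and the resulting estimate in terms of $\|\cdot\|_1$) has already been carried out in \autoref{Lemma1}, and the corollary is simply its linear-difference counterpart. The only point requiring a line of care is the verification of linearity of $\mathcal{F}$ in $A$, which follows at once from the linear dependence of $f_{BE}$ on its expansion coefficients; once that is stated, the estimate is immediate.
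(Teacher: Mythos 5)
Your proposal is correct and matches the paper's (implicit) reasoning exactly: the paper states this as a corollary of Lemma~\ref{Lemma1} without proof, the intended argument being precisely that $\mathcal{F}$ is linear in the coefficient vector $A$, so $\mathcal{F}(A_1)-\mathcal{F}(A_2)=\mathcal{F}(A_1-A_2)$ and the Lipschitz bound follows from the lemma applied to $A_1-A_2$. Your write-up simply makes this one-line deduction explicit.
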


\begin{definition}
The Hellinger distance between two probability measures $\mu_1$ and $\mu_2$ with common reference measure $\nu$ is defined as
\begin{equation*}
    d_{\rm Hell}(\mu_1, \mu_2)=\left( \int \left(\sqrt{\mathrm{d} \mu_{1} / \mathrm{d} \nu}-\sqrt{\mathrm{d} \mu_{2} / \mathrm{d} \nu }\right)^2 ~ \mathrm{d} \nu \right)^{1/2}.
\end{equation*}
\end{definition}

The following theorem states the well-posedness of the Bayesian inverse problem under investigation.
\begin{theorem}
Let $\mu_{0}$ be a Gaussian measure such that $\mu_0(X)=1$ and $\mu^{U} \ll \mu_{0}$. For $U_1$ and $U_2$ with $\max\{\|U_1\|_{L^2(\Gamma)} , \|U_2\|_{L^2(\Gamma)} \}\leq r$, there exists $M=M(r)>0$ such that
\begin{equation*}
   d_{\rm Hell}(\mu_{U_1}, \mu_{U_2}) \leq M \|U_1-U_2\|_{L^2(\Gamma)} .
\end{equation*}
\end{theorem}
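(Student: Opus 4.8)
The plan is to follow the now-standard template for well-posedness of Bayesian inverse problems \cite{Stuart2010}, exploiting the fact that $X=\mathbb{R}^{(2N+1)M}$ is finite-dimensional and $\mu_0$ is Gaussian, so that every polynomial moment of $\|A\|_1$ is finite. Writing the Hellinger distance with the common reference measure $\mu_0$, one has
\begin{equation*}
d_{\rm Hell}(\mu_{U_1},\mu_{U_2})^2 = \int_X \left( L(U_1)^{-1/2}e^{-G(A;U_1)/2} - L(U_2)^{-1/2}e^{-G(A;U_2)/2}\right)^2 d\mu_0(A).
\end{equation*}
First I would add and subtract the cross term $L(U_1)^{-1/2}e^{-G(A;U_2)/2}$ and apply $(a+b)^2\le 2a^2+2b^2$ to split the integral into a piece $I_1$ (variation of the exponent at fixed normalization) and a piece $I_2$ (variation of the normalization constant). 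The goal is to bound each by $(\text{const}(r))^2\,\|U_1-U_2\|_{L^2(\Gamma)}^2$, so that taking square roots yields the claimed Lipschitz estimate with $M=M(r)$.

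The central ingredient is a Lipschitz bound on the potential in the data. Using $\|a\|^2-\|b\|^2=\langle a-b,a+b\rangle$ I obtain
\begin{equation*}
|G(A;U_1) - G(A;U_2)| \le \frac{1}{2\sigma^2}\|U_1 - U_2\|_{L^2(\Gamma)}\left(\|U_1\|_{L^2(\Gamma)} + \|U_2\|_{L^2(\Gamma)} + 2\|\mathcal{F}(A)\|_{L^2(\Gamma)}\right),
\end{equation*}
and then Lemma~\ref{Lemma1} together with $\|U_i\|_{L^2(\Gamma)}\le r$ gives the bound $|G(A;U_1)-G(A;U_2)|\le \sigma^{-2}\|U_1-U_2\|_{L^2(\Gamma)}\,(r+C\|A\|_1)$. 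Combined with the elementary inequalities $|e^{-x/2}-e^{-y/2}|\le \tfrac12|x-y|$ and $|e^{-x}-e^{-y}|\le|x-y|$ for $x,y\ge 0$ (legitimate because $G\ge 0$), this controls $I_1$ once I integrate $(r+C\|A\|_1)^2 e^{-G(A;U_2)}\le (r+C\|A\|_1)^2$ against $\mu_0$; the finiteness of this integral is precisely where Gaussianity (finite second moment of $\|A\|_1$) enters.

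For $I_2$ I would first secure a positive lower bound on the normalization constant that is uniform over $\|U\|_{L^2(\Gamma)}\le r$: since $\|U-\mathcal{F}(A)\|_{L^2(\Gamma)}^2\le 2r^2+2C^2\|A\|_1^2$, one gets $G(A;U)\le \sigma^{-2}(r^2+C^2\|A\|_1^2)$ and hence
\begin{equation*}
L(U) \ge e^{-r^2/\sigma^2}\int_X e^{-C^2\|A\|_1^2/\sigma^2}\, d\mu_0(A) =: L_{\min}(r) > 0,
\end{equation*}
the integral being finite and strictly positive because $\mu_0$ is a Gaussian probability measure. The mean value theorem applied to $x\mapsto x^{-1/2}$ on $[L_{\min},\infty)$ then gives $|L(U_1)^{-1/2}-L(U_2)^{-1/2}|\le \tfrac12 L_{\min}^{-3/2}|L(U_1)-L(U_2)|$, while $|L(U_1)-L(U_2)|\le \int_X|G(A;U_1)-G(A;U_2)|\,d\mu_0\le C(r)\|U_1-U_2\|_{L^2(\Gamma)}$ by the potential bound and the finite first moment of $\|A\|_1$. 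Since $I_2=|L(U_1)^{-1/2}-L(U_2)^{-1/2}|^2\,L(U_2)$ and $L(U_2)\le 1$ (as $G\ge 0$ and $\mu_0(X)=1$), this closes the estimate.

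I expect the main obstacle to be bookkeeping rather than anything conceptual: one must verify that every constant appearing—from the potential bound, from $L_{\min}(r)$, and from the moment integrals—depends only on $r,\sigma,C$ and $\mu_0$, and not on the particular data $U_1,U_2$, so that the resulting $M=M(r)$ is genuinely uniform. The only real analytic inputs beyond algebra are the finiteness of $\int_X\|A\|_1\,d\mu_0$ and $\int_X\|A\|_1^2\,d\mu_0$ and the strict positivity of $L_{\min}(r)$; both are immediate here since $\mu_0$ is Gaussian on a finite-dimensional space, whereas in the infinite-dimensional setting this is exactly the point at which Fernique's theorem would be required.
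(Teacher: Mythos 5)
Your proposal is correct and follows essentially the same route as the paper's proof: the same splitting of the Hellinger integral via the cross term $L(U_1)^{-1/2}e^{-G(A;U_2)/2}$, the same Lipschitz-in-$U$ bound on $G$ via Lemma~\ref{Lemma1}, a positive lower bound on $L(U)$ from Gaussianity (you integrate $e^{-C^2\|A\|_1^2/\sigma^2}$ over all of $X$ where the paper restricts to $\{\|A\|_1\le 1\}$, an immaterial difference), and the mean value theorem for $x\mapsto x^{-1/2}$. Your explicit remark on where the finite first and second moments of $\|A\|_1$ under $\mu_0$ are used is in fact slightly more careful than the paper's write-up.
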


\begin{proof}
From
\begin{equation*}
    L(U)=\int_{X} \exp \left( -\frac{1}{2\sigma^2} \|U-\mathcal{F}(A)\|^2_{L^2(\Gamma)} \right) \mathrm{d} \mu_{0}(A),
\end{equation*}
we have that
\begin{equation}\label{Hproof1}
    0\leq L(U)\leq 1.
\end{equation}
Using Lemma~\ref{Lemma1}, we obtain that
\begin{equation}\label{Hproof2}
\begin{split}
   L(U) & \geq \int_{{X}} \exp \left(- \frac{1}{2\sigma^2}~\| U\|^2_{L^2(\Gamma)} -\frac{1}{2\sigma^2}~\| \mathcal{F}(A)\|^2_{L^2(\Gamma)}  \right)  \mathrm{d} \mu_{0}(A) \\
   & \geq \int_{\|A\|_{1}\leq 1} \exp \left(- \frac{1}{2\sigma^2}~\| U\|^2_{L^2(\Gamma)}-\frac{C}{2\sigma^2}~\|A\|_{1}\right) \mathrm{d} \mu_{0}(A) \\
   & = \exp(-M)\mu_{0}\{\|A\|_{1}\leq 1 \} \\
   & > 0
\end{split}
\end{equation}
since $\mu_{0}$ is a Gaussian measure.

Using the mean value theorem and Lemma~\ref{Lemma1}, for $\mu_{0}$, it holds that
\begin{equation} \label{Hproof3}
 \begin{split}
& \,| L(U_1)-L(U_2)| \\
\leq&\, \int_{X} \left|\exp \left( -G(A;U_1)  \right)- \exp \left( -G(A;U_2)  \right) \right|  \mathrm{d} \mu_{0}(A)  \\ \leq& \int_{X} \left|  -G(A;U_1) - ( -G(A;U_2) ) \right|  \mathrm{d} \mu_{0}(A) \\
 = &\,\int_{X} \left|-\frac{1}{2\sigma^2} \|U_1-\mathcal{F}(A)\|^2_{L^2(\Gamma)}+\frac{1}{2\sigma^2} \|U_2-\mathcal{F}(A)\|^2_{L^2(\Gamma)}) \right|  \mathrm{d}\mu_{0}(A) \\
 \leq& \, \int_{X}  \frac{1}{2\sigma^2}\left(\left| \| U_1\|_{L^2(\Gamma)} ^2-\| U_2\|_{L^2(\Gamma)} ^2\right| + 2 \|\mathcal{F}(A)\|_{L^2(\Gamma)}  ~ \| U_1- U_2\|_{L^2(\Gamma)}   \right) \mathrm{d} \mu_{0}(A)\\
 \leq &\, \int_{X}  \frac{1}{2\sigma^2}\left( \| U_1\|_{L^2(\Gamma)} +\| U_2\|_{L^2(\Gamma)}  + 2C ||A||_{1}  \right) \mathrm{d} \mu_{0}(A) \| U_1- U_2\| _{L^2(\Gamma)} \\
      \leq &\, M \|U_1-U_2\|_{L^2(\Gamma)}.
       \end{split}
\end{equation}
From the definition of the Hellinger distance, we have that
\begin{equation} \label{Hproof4}
  \begin{split}
    & \,d_{\rm Hell}^2(\mu_{U_1}, \mu_{U_2}) \\ = & \frac{1}{2} \int_{X} \left\{ \left(\frac{\exp (-G(A;U_{1}))}{L(U_{1})} \right)^{1/2} -\left(\frac{\exp (-G(A;U_{2}))}{L(U_{2})} \right)^{1/2}\right\}^2 \mathrm{d} \mu_{0}(A)  \\
    = &\, \frac{1}{2} \int_{X} \left\{ \left(\frac{\exp (-G(A;U_{1}))}{L(U_{1})} \right)^{1/2} -\left(\frac{\exp (-G(A;U_{2}))}{L(U_{1})} \right)^{1/2} \right. \\
    &\, \left. {} +\left(\frac{\exp (-G(A;U_{2}))}{L(U_{1})} \right)^{1/2} -\left(\frac{\exp (-G(A;U_{2}))}{L(U_2)} \right)^{1/2} \right\}^2 \mathrm{d} \mu_{0}(A) \\
    \leq &\, L(U_{1})^{-1} \int_{X} \left\{ {\exp \left(-\frac{1}{2}G(A;U_{1}) \right)} -{\exp \left(-\frac{1}{2}G(A;U_{2}) \right)} \right\}^2 \mathrm{d} \mu_{0}(A) \\
    & \,+ \left| L(U_{1})^{-1/2}- L(U_{2})^{-1/2} \right|^2 \int_{X}  {\exp (-G(A;U_{2}))}  \mathrm{d} \mu_{0}(A).
  \end{split}
\end{equation}
With the mean value theorem and Lemma~\ref{Lemma1}, it holds that
\begin{equation}\label{Hproof5}
    \begin{split}
       & \,\int_{X} \left\{ {\exp \Big(-\frac{1}{2}G(A;U_{1}) \Big)} -{\exp \Big(-\frac{1}{2}G(A;U_{2}) \Big)} \right\}^2 \mathrm{d} \mu_{0}(A) \\
       \leqslant & \, \int_{X} \Big|\frac{1}{2}G(A;U_{1}) -\frac{1}{2}G(A;U_{2}) \Big|^2 \mathrm{d} \mu_{0}(A) \\
       \leqslant &\, \frac{ 1}{16\sigma^4} \int_{X}  \Big|\|U_{1}-\mathcal{F}(A)\|_{L^2(\Gamma)} ^2 -\|U_{2}-\mathcal{F}(A)\|_{L^2(\Gamma)} ^2\Big|^2 \mathrm{d} \mu_{0}(A) \\
       \leqslant &\, M \|U_{1}-U_{2}\|_{L^2(\Gamma)} ^2.
    \end{split}
\end{equation}
Using the bounds on $L(U_1)$ and $L(U_2)$, we have that
\begin{equation} \label{Hproof6}
    \begin{split}
    \left | L(U_1)^{-1/2}- L(U_2)^{-1/2} \right |^2 &\leqslant M \max \Big(L(U_1)^{-3}, L(U_2)^{-3} \Big) | L(U_{1})- L(U_{2}) |^2\\
     &\leqslant M \|U_{1}-U_{2}\|_{L^2(\Gamma)} ^2.
        \end{split}
\end{equation}
Combining \eqref{Hproof1}-\eqref{Hproof6}, we conclude that
\begin{equation*}
   d_{\rm Hell}(\mu_{U_1}, \mu_{U_2}) \leqslant M\|U_1-U_2\|_{L^2(\Gamma)}.
\end{equation*}
\end{proof}

To explore the posterior probability distribution of the unknown $A$, we employ the preconditioned Crank-Nicolson (pCN) Metropolis-Hastings (MH) algorithm for the Markov chain Monte Carlo (MCMC) method  \cite{Cotter2013}.

\vspace{1.5ex}
{\bf pCN-MH:}
\begin{itemize}
  \item[1.] Set $j \leftarrow 0$ and choose an initial value ${A}^{(0)}$.
  \item[2.] Propose a move according to
  \begin{equation*}
   \tilde{A}^{(j)}=\left(1-{\beta}^2\right)^{1/2}{A}^{(j)}+\beta W_n,\quad W_n\sim \mathcal{N}(0, \mathbb{I}).
  \end{equation*}
  \item[3.] Compute
  \begin{equation*}
 \alpha({A}^{(j)}, \tilde{A}^{(j)})=\min \left\{1, {\exp \left(-G(\tilde{A}^{(j)};U)+G({A}^{(j)};U)\right)}\right\}.
  \end{equation*}
  \item[4.] Draw $\tilde{\alpha}\sim \mathcal{U}(0,1)$. If $\alpha({A}^{(j)}, \tilde{A}^{(j)})\geq \tilde{\alpha}$,
      set ${A}^{(j+1)}=\tilde{A}^{(j)}$.  Else, ${A}^{(j+1)}={A}^{(j)}$.
  \item[5.] When $j=\text{MaxIt}$, the maximum sample size, stop.
  	Otherwise, set $j \leftarrow j+1$ and go to Step 2.
\end{itemize}

\section{Numerical Examples}
In this section, we present some numerical experiments to demonstrate the effectiveness of the proposed deterministic-statistical method.

In all examples, the synthetic far field data is generated by decomposing $\Omega$ into a triangular mesh ${\mathcal T}$ and approximating \eqref{far_field} by
\begin{equation}
    u^{\infty}(\hat{x},k) \approx \sum_{T\in {\mathcal T}} \Phi^{\infty}_k(\hat{x},y_{T}) f(y_{T}) |T|, \quad \hat{x}=(\cos\theta,\sin\theta),
\end{equation}
 where $T \in {\mathcal T}$ is a triangle, $y_T$ is the center of $T$, and $|T|$ denotes the area of $T$. The observation directions $\theta$'s are chosen from the following three apertures:
\begin{equation*}
       \Gamma_1 = 0: \frac{\pi}{26}:2\pi- \frac{\pi}{26}, \qquad
        \Gamma_2 = 0: \frac{\pi}{26}:\pi- \frac{\pi}{26}, \qquad
         \Gamma_3 = 0: \frac{\pi}{26}:\frac{\pi}{2}- \frac{\pi}{26},
\end{equation*}
i.e. $\Gamma_1$ is the full aperture, $\Gamma_2$ is a half of the full aperture and $\Gamma_3$ is a quarter of the full aperture. To ensure the accuracy of the far field data, we use fine meshes with the mesh size $h \approx 0.01$. The perturbed far field measurement is given by
 \begin{equation*}
u^{m}(\hat{x},k):=u^{\infty}(\hat{x},k)+0.03 ( \max_{\hat{x}} \Re ( u^{\infty}(\hat{x},k))+ i  \max_{\hat{x}} \Im ( u^{\infty}(\hat{x},k))),
\end{equation*}
where $\Re $ and $\Im$ represent the real and imaginary part, respectively.

For the DSM, the measurement data is the far field pattern correspond to wavenumbers $K_{1}=1:1:3$. The domain $D$ is the square $[-4,4]^2$, which is uniformly covered by $81 \times 81$ sampling points. The cutoff values for the indicator function of three scenarios $\Gamma_1$, $\Gamma_2$ and $\Gamma_3$ are $\gamma=0.41, 0.64$ and $0.70$, correspondingly. These $\gamma$'s are obtained by trial and error.  In the contour plots of the indicator $I_{x_{p}}$ over the sampling domain $D$, the red dashed line represents the exact boundary of source $f(x)$ and the estimation of the support is the black circle. In fact, we will see that the DSM uses a smaller set of the far field data than the Bayesian inversion does. In general, a satisfactory reconstruction of the disc $\hat{B}$ can be obtained using the data for a few smaller $k$'s.

Once we obtain the approximate disc $\hat{B}$, we choose $N=2,M=5$ in \eqref{eigenfuncexpansion} for the approximation $f_{BE}$ ($25$ terms in total). The measurement for the Bayesian method is corresponding to the wavenumbers $K=1:1:20$. In the MCMC we take $\pi(A)=\mathcal{N}(0,0.01)$ and $\sigma=0.04$ in the likelihood. To compute the posterior distribution of $A$, we apply {\bf pCN-MH} with $\beta=0.001$. A Markov chain of sample size $120,000$ is drawn in the Bayesian inversion, of which the first $20,000$ samples are discarded. The CM is then used as a point estimate for $A$. To evaluate the performance of the reconstruction, we compute both the absolute error (AE)  $\|f-f_{BE}\|_2$ and the relative error (RE) $\frac{\|f_{BE}-f\|_2}{\|f\|_2}$.

\vskip 0.2cm
\textbf{Example 1:} Let
\begin{equation}\label{f1}
f(x)=3Q_{11}(x), \quad x\in B(0,0.9),
\end{equation}
i.e., the source function is a constant multiple of an eigenfunction $Q_{11}$ for $B(0,0.9)$. We first show the performance of the Bayesian inversion when the compact support of $f(x)$ is known exactly, namely, $\hat{B} = B(0,0.9)$. Due to \eqref{f1}, we expect that the CM of the coefficient for $Q_{11}$ is $3$ and the CM's of the other coefficients are zeros in \eqref{eigenfuncexpansion}. Using the Bayesian method for three apertures, $\Gamma_1$, $\Gamma_2$ and $\Gamma_3$, the reconstructions $f_{BE}$ are shown in Fig. \ref{fig:example1fig2}. It can be seen that the samples for the coefficient of $Q_{11}(x)$ accumulate around $3$ and the rest accumulate around $0$ for all three apertures.
\begin{figure}[h!]
\begin{center}
\begin{tabular}{lll}
\resizebox{0.30\textwidth}{!}{\includegraphics{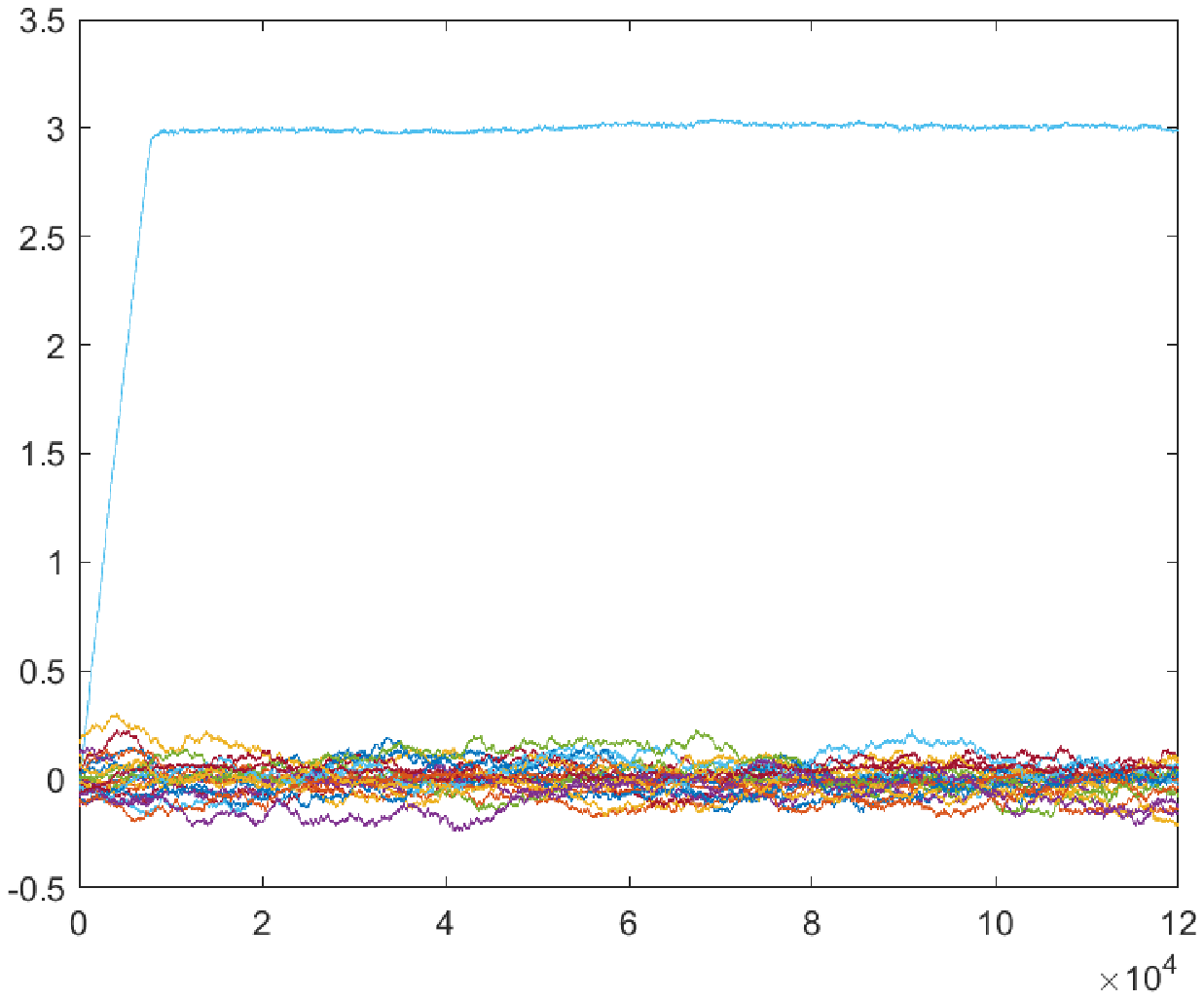}}&
\resizebox{0.30\textwidth}{!}{\includegraphics{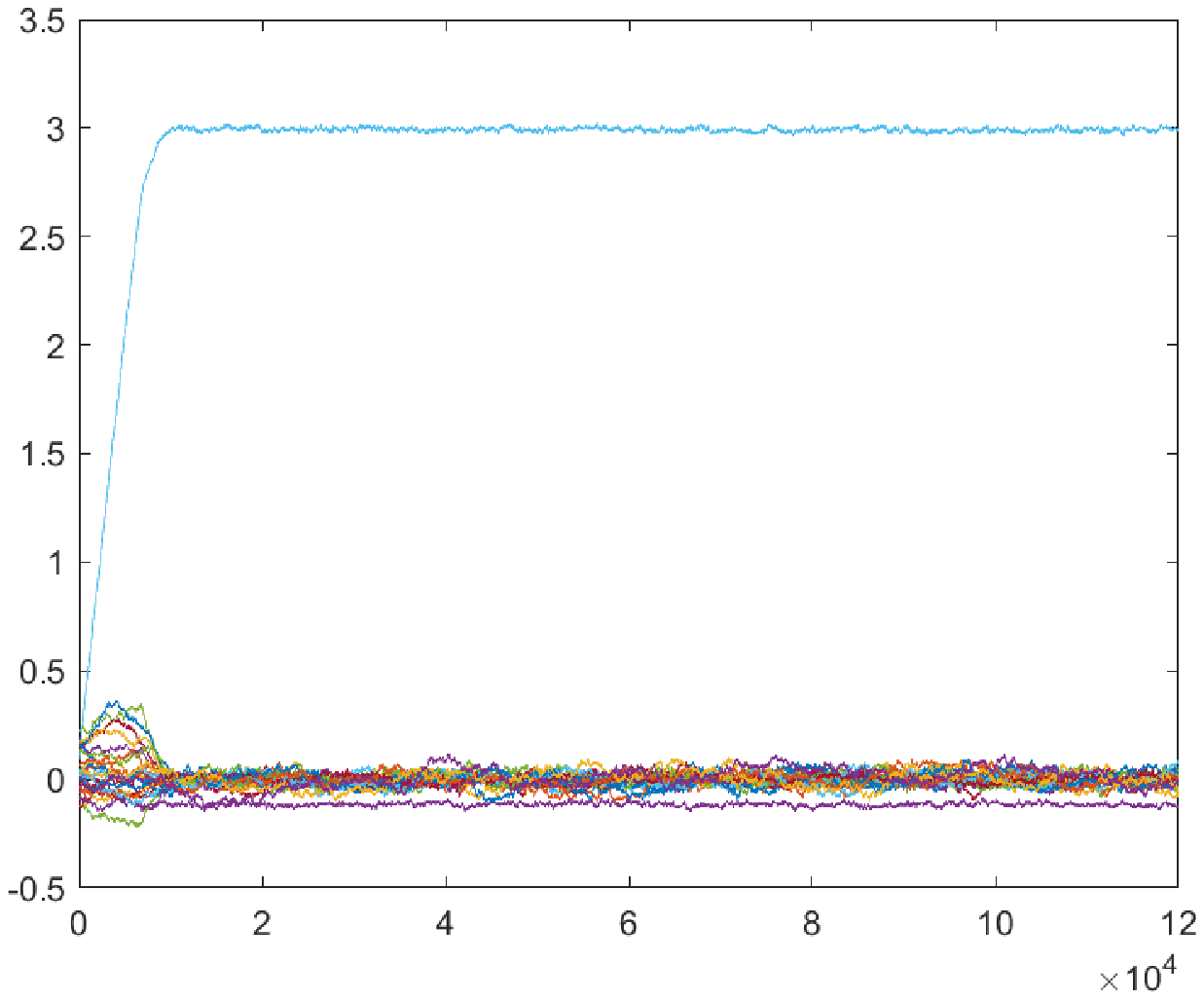}}&
\resizebox{0.30\textwidth}{!}{\includegraphics{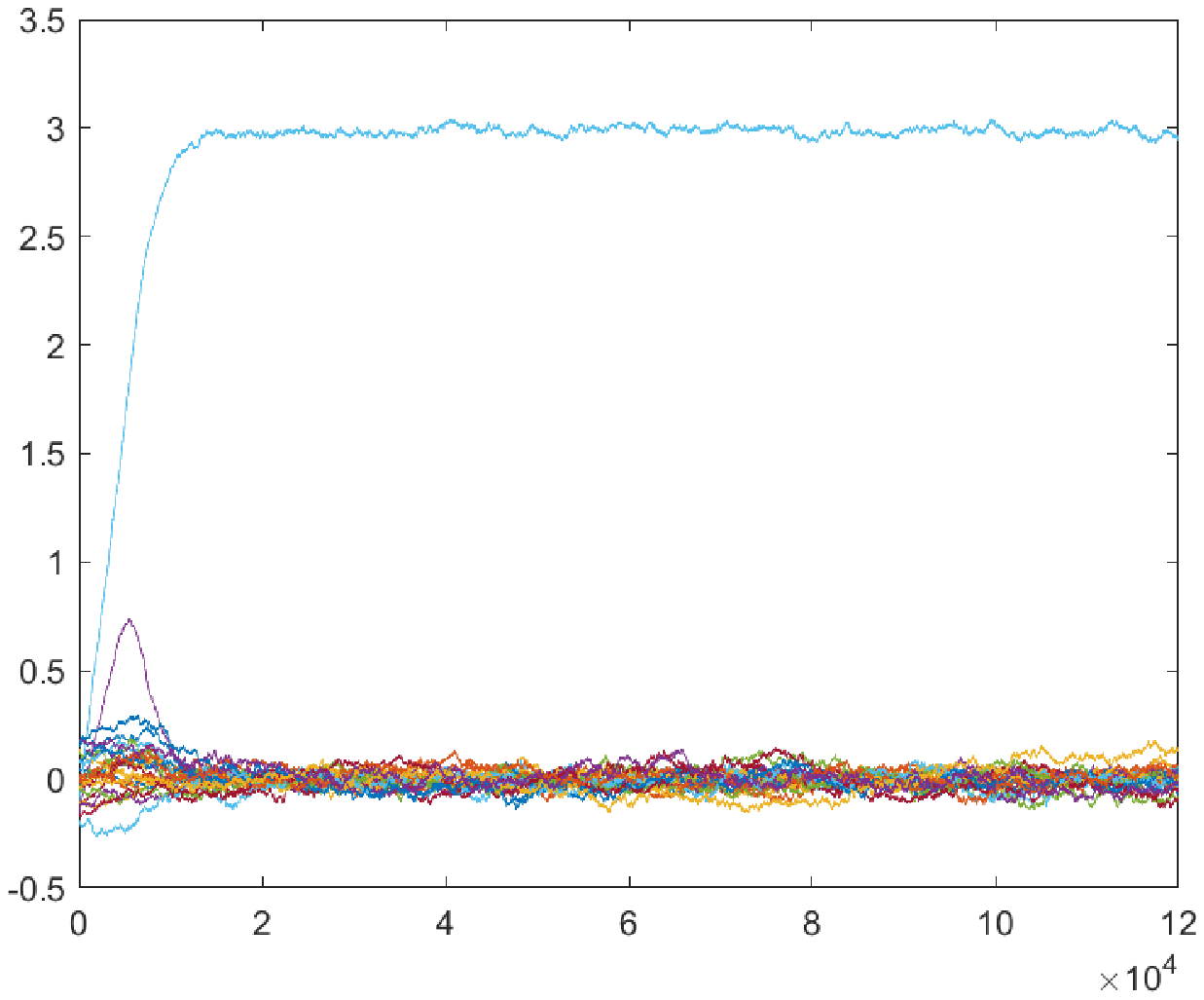}} \\
\resizebox{0.30\textwidth}{!}{\includegraphics{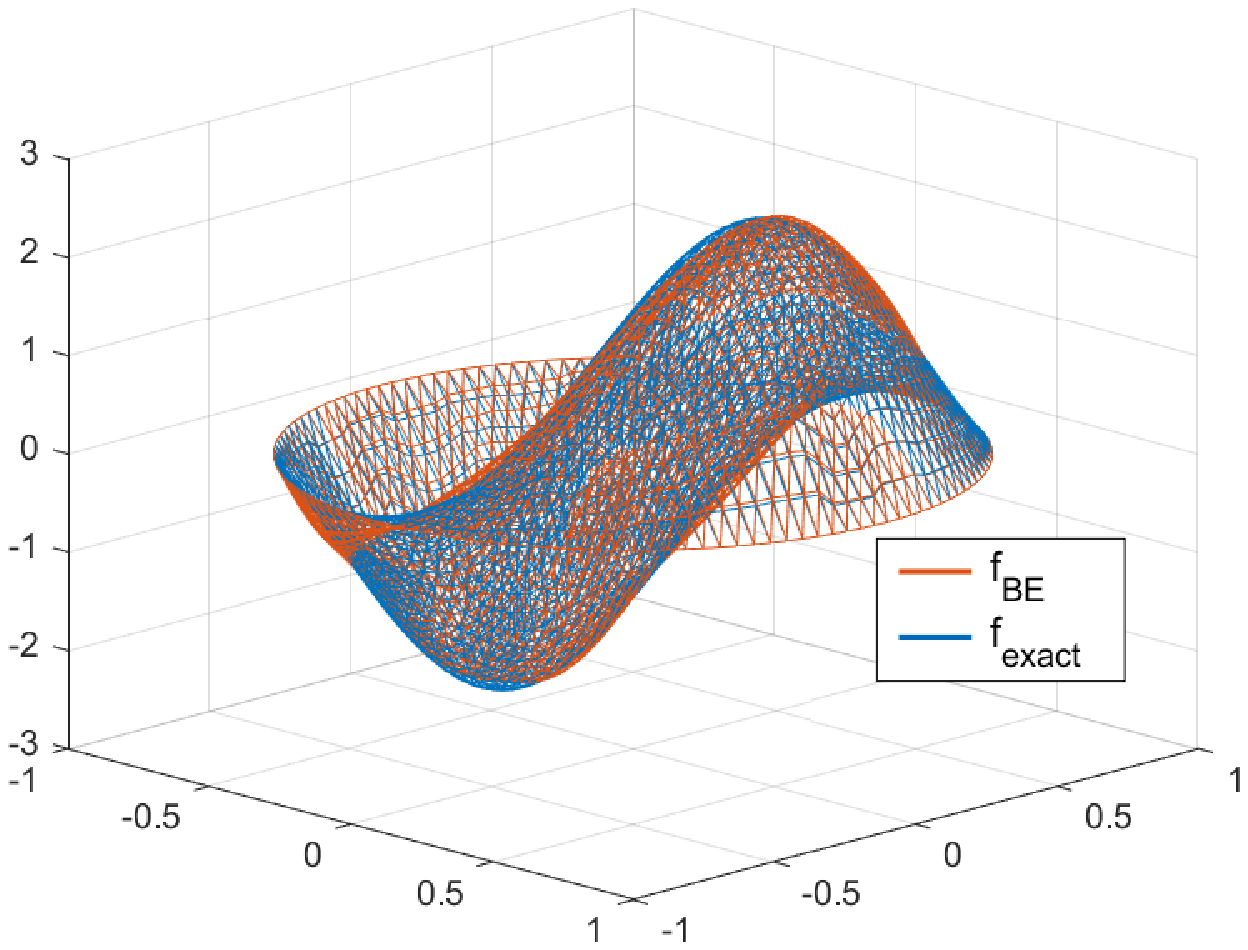}}&
\resizebox{0.30\textwidth}{!}{\includegraphics{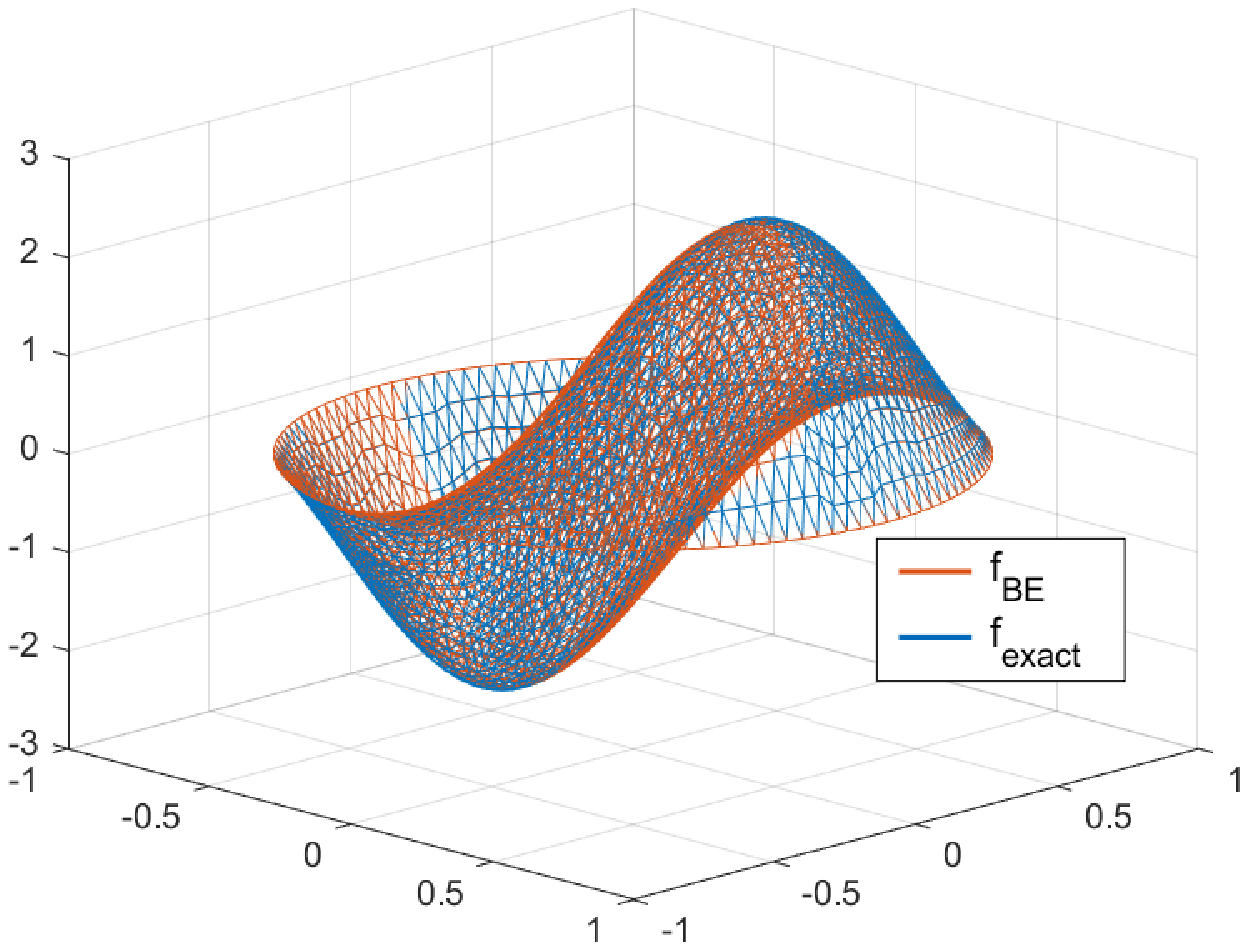}}&
\resizebox{0.30\textwidth}{!}{\includegraphics{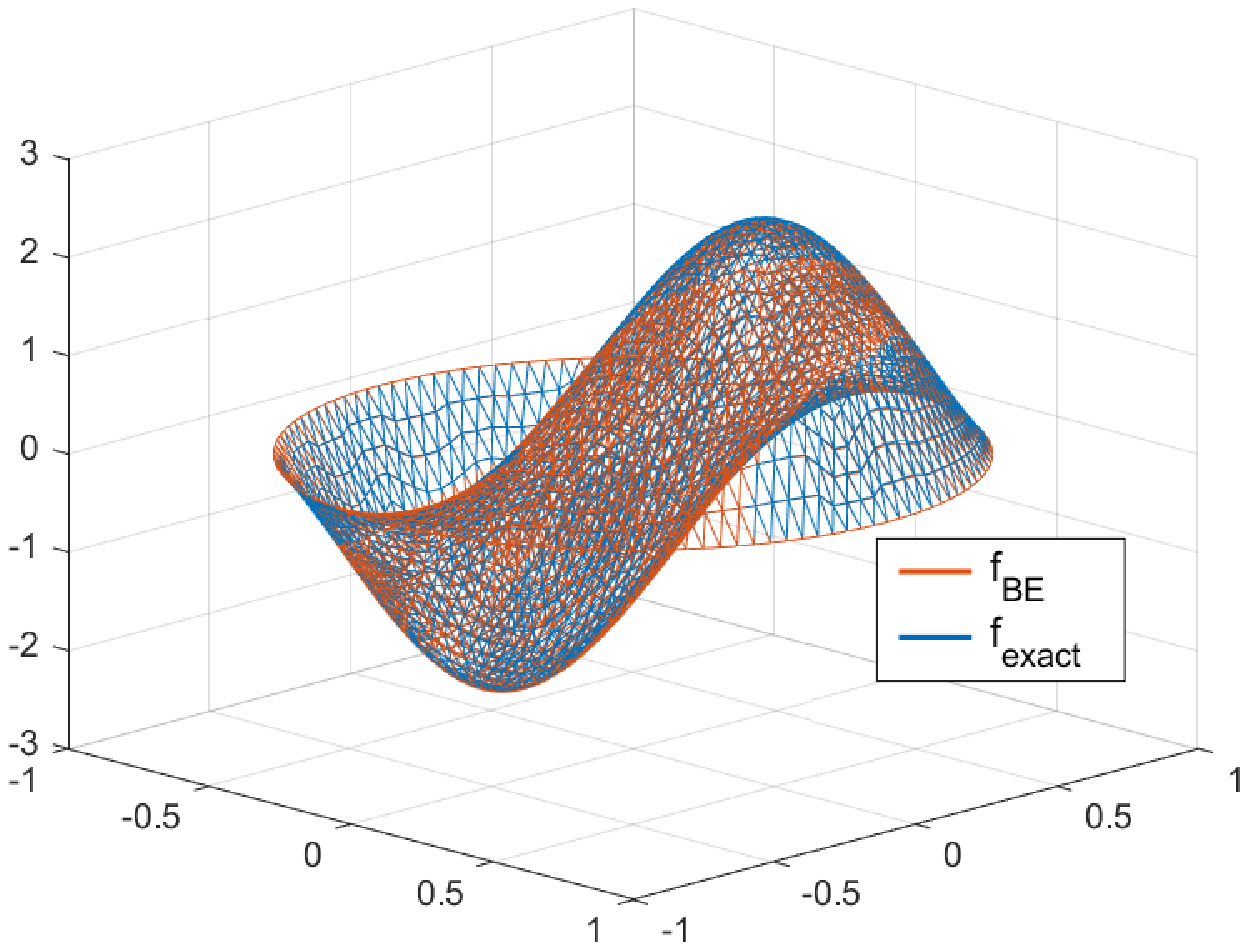}} \\
\end{tabular}
\end{center}
\caption{Example 1 (exact support known). Top row: the histograms of the coefficients for $f_{BE}$ when the support is known exactly. Bottom row: the reconstructed $f_{BE}$ and exact $f$. Left column: $\Gamma_1$. Middle column: $\Gamma_2$. Right column: $\Gamma_3$.}
\label{fig:example1fig2}
\end{figure}

 Next we use the proposed deterministic-statistical method to reconstruct $f(x)$ without the knowledge of its support. The DSM is first used to find a disc $\hat{B}$ containing the support of $f(x)$. For all three apertures $\Gamma_1$, $\Gamma_2$ and $\Gamma_3$, the indicator functions $I(x_{p})$'s and the discs $\hat{B}$'s obtained are shown in the top row of Fig.\ref{fig:example1fig1}.  The associated approximate radii of $\hat{B}$'s are $ 1.3601$, $1.4213 $ and $1.0817$ (see Table~\ref{table1}). All $\hat{B}$'s are close to the exact support, which indicates the effectiveness of the DSM.

In the Bayesian inversion stage, based on the reconstructed $\hat{B}$, we explore the statistical information of the coefficients for $f_{BE}$ using {\bf pCN-MH}. The second row of Fig.~\ref{fig:example1fig1} shows the histograms of the coefficients, which tend to converge. Note that the eigenfunctions of $\hat{B}$ are used and the coefficients for $f_{BE}$ are not zero in general. The exact source function $f$ and the reconstructions $f_{BE}$ are shown in the third row of Fig.~\ref{fig:example1fig1}. The absolute and the relative errors of the reconstructions using the CM's are listed in Table~\ref{table2} (first four columns). It can be seen that all the approximate source functions $f_{BE}$'s are quite close to the exact sources. For all three apertures, the absolute errors are small and  the relative errors are less than 7\%.
\begin{figure}[h!]
\begin{center}
\begin{tabular}{lll}
\resizebox{0.3\textwidth}{!}{\includegraphics{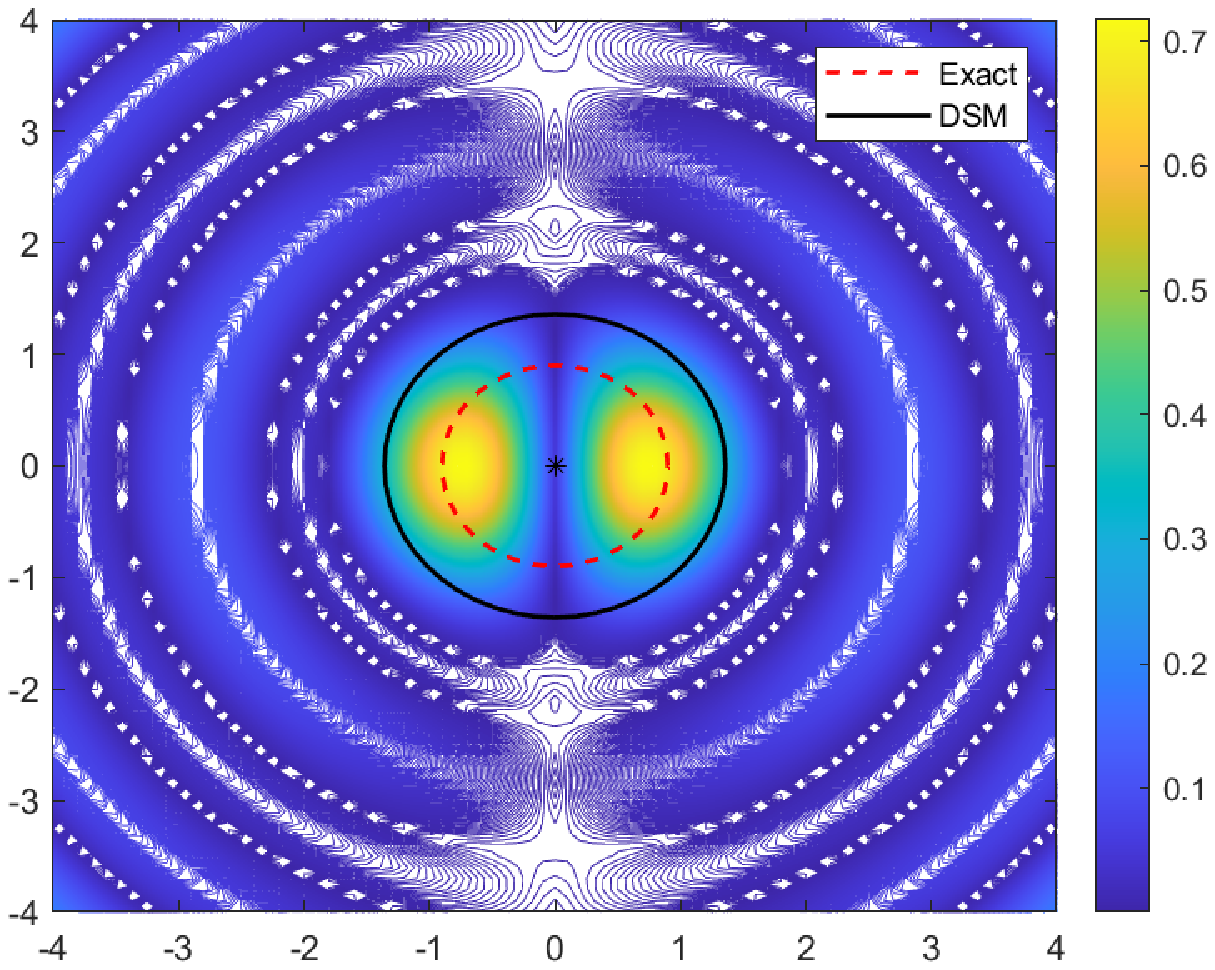}}&
\resizebox{0.3\textwidth}{!}{\includegraphics{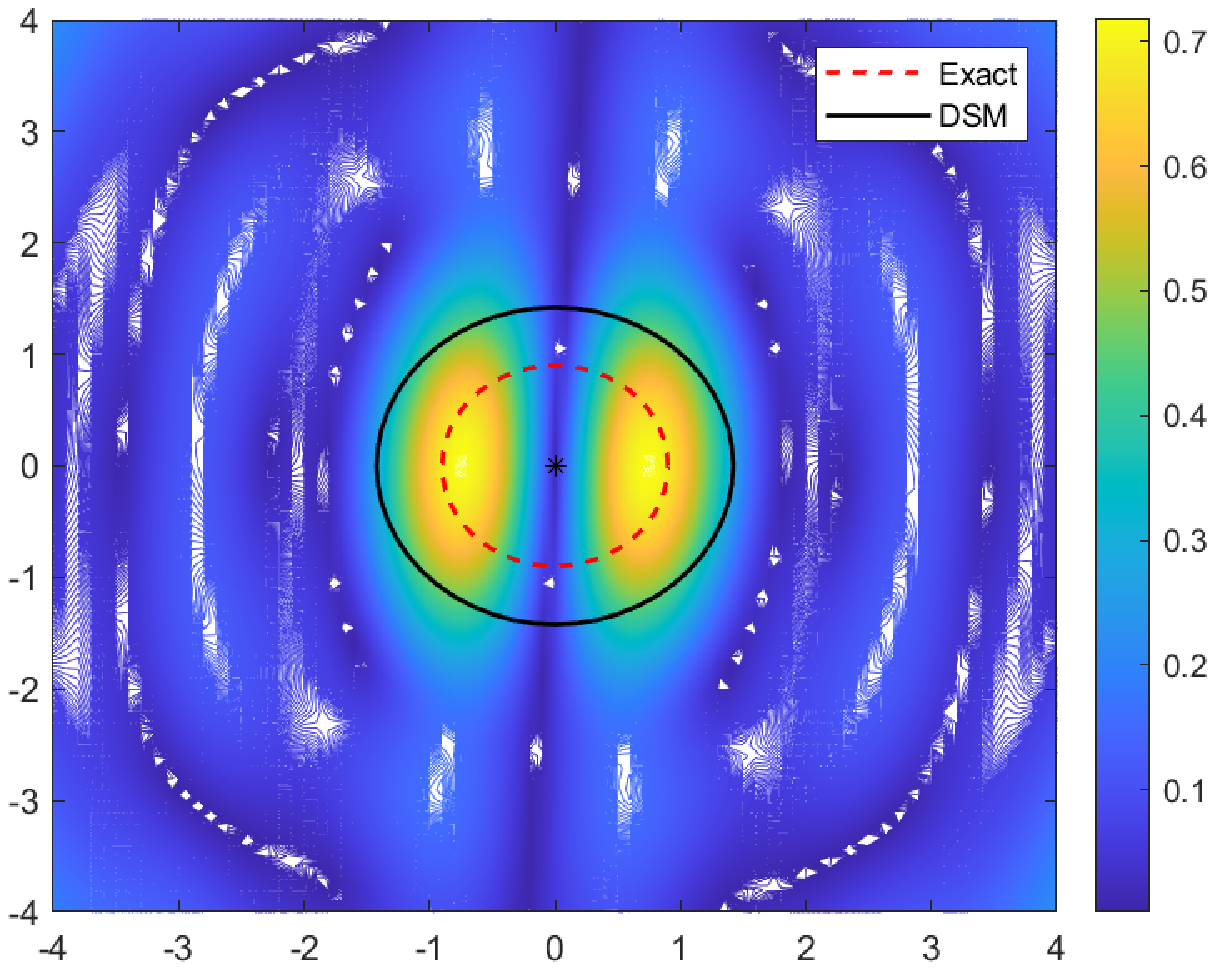}}&
\resizebox{0.3\textwidth}{!}{\includegraphics{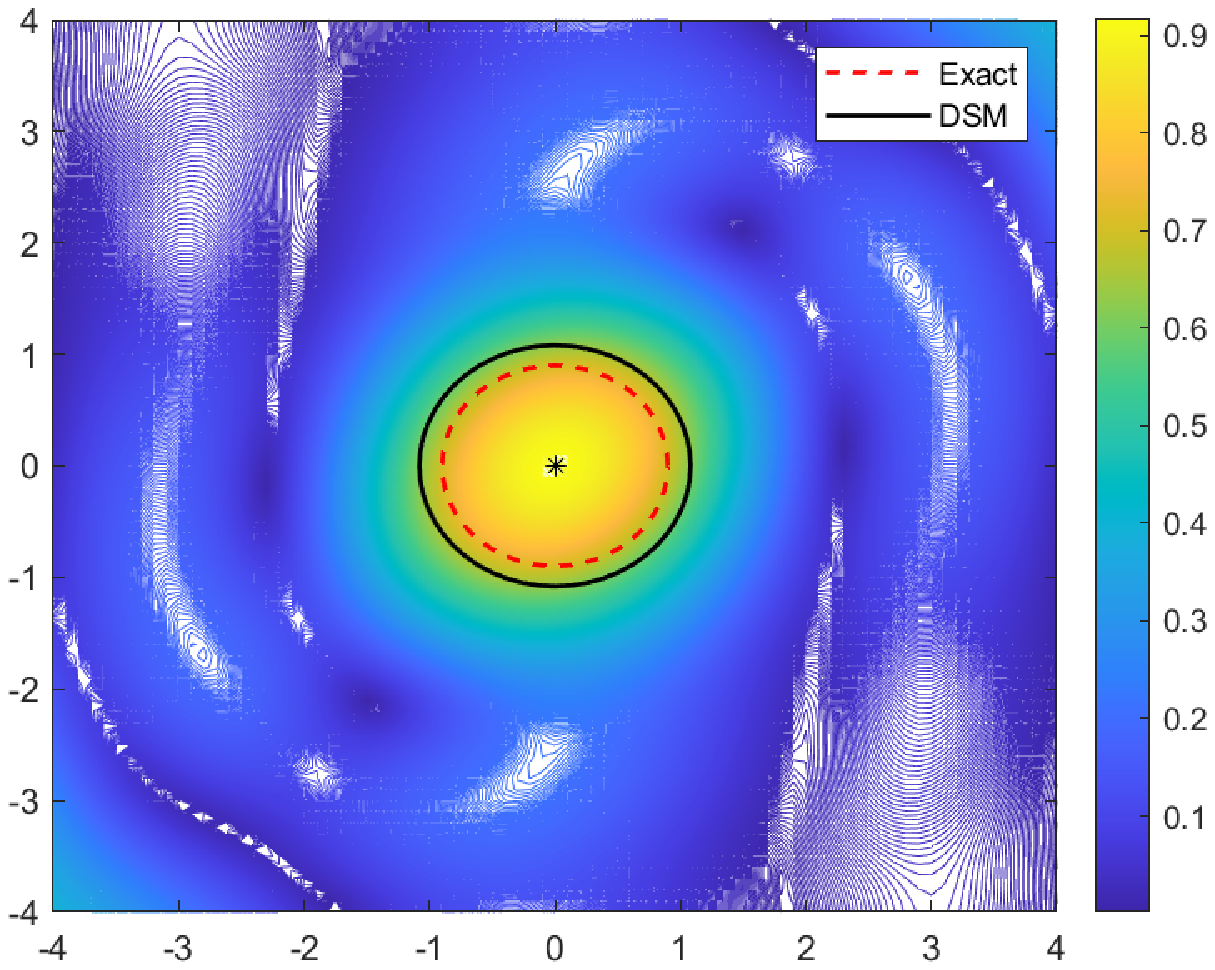}} \\
\resizebox{0.3\textwidth}{!}{\includegraphics{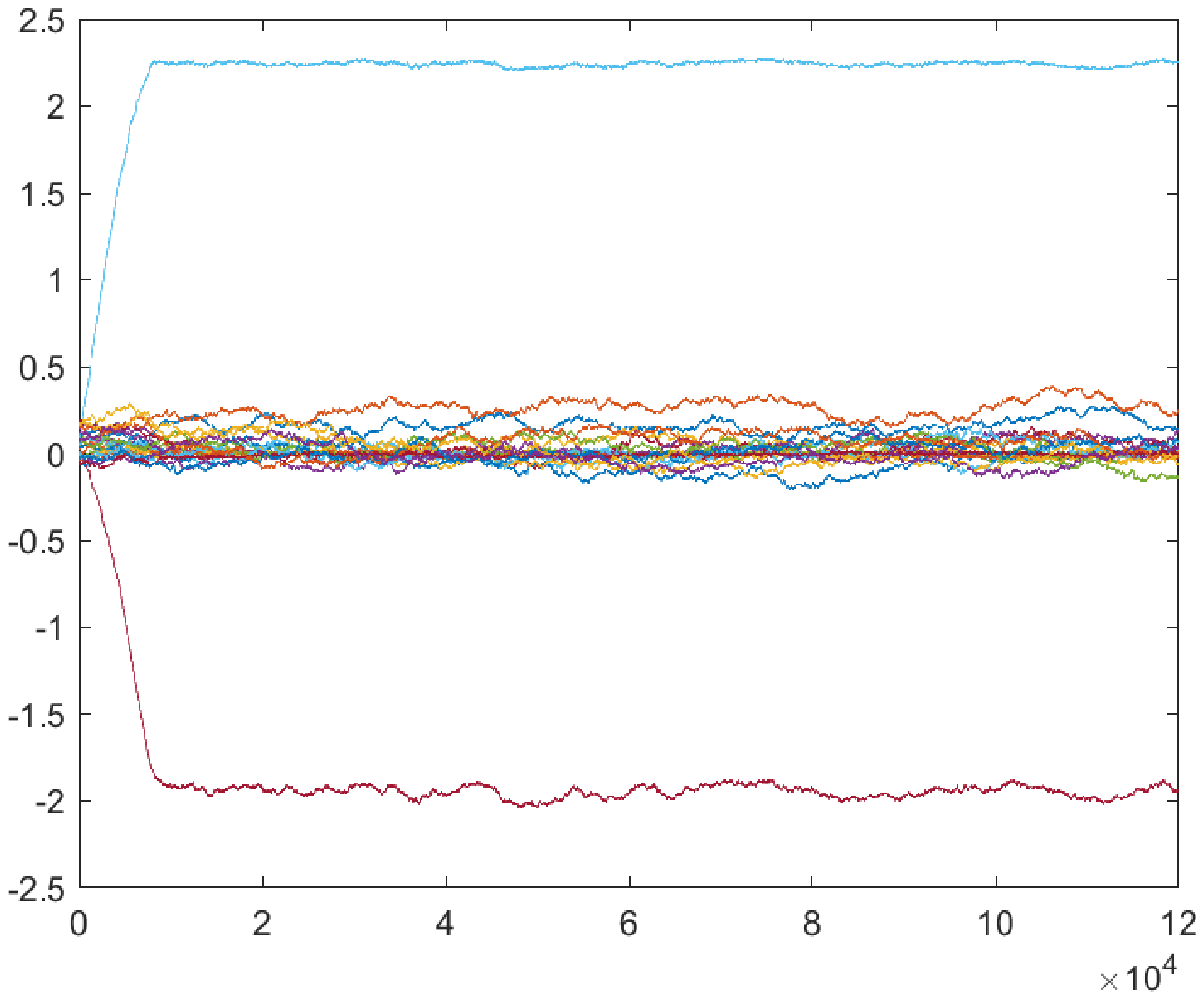}}&
\resizebox{0.3\textwidth}{!}{\includegraphics{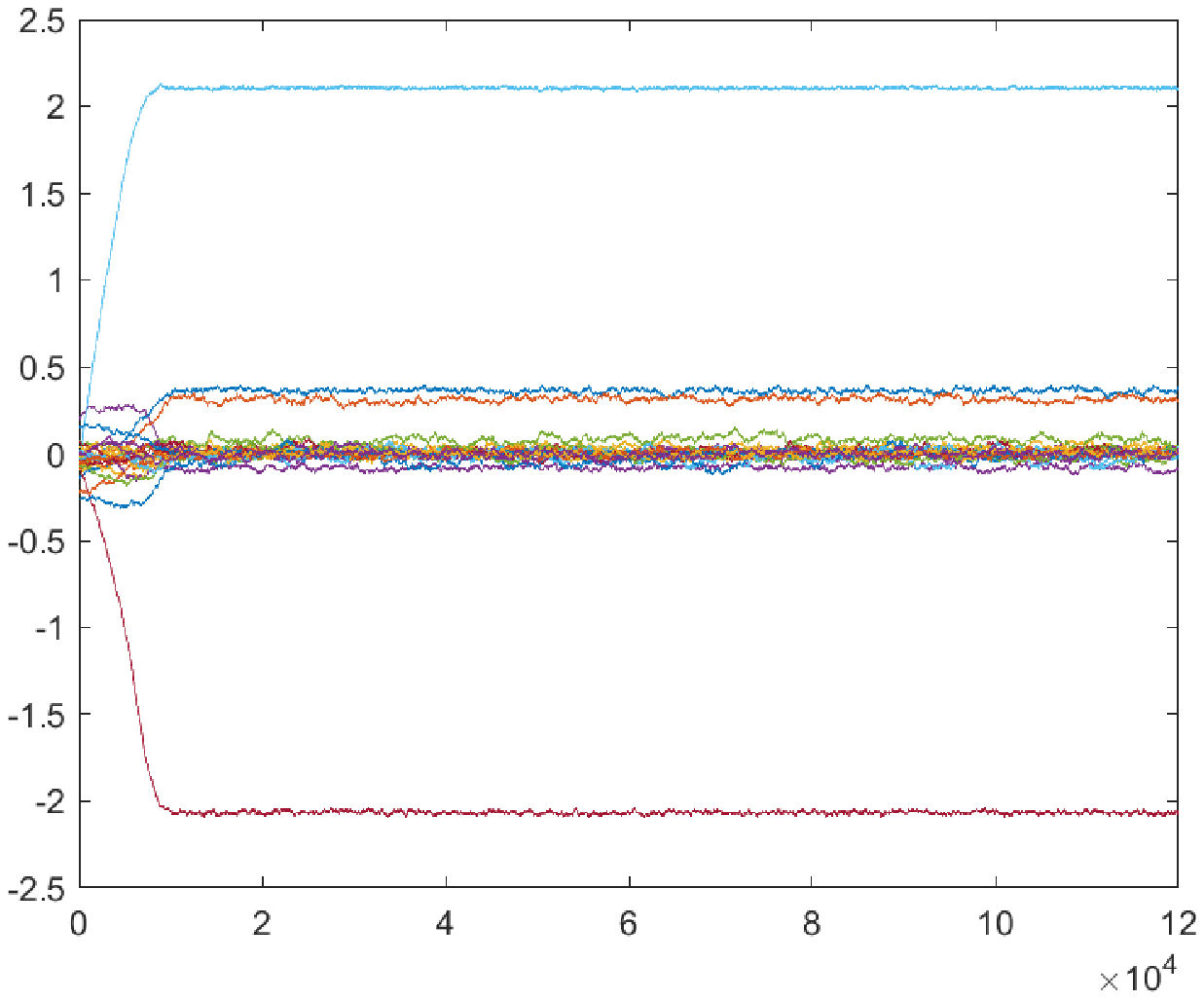}}&
\resizebox{0.3\textwidth}{!}{\includegraphics{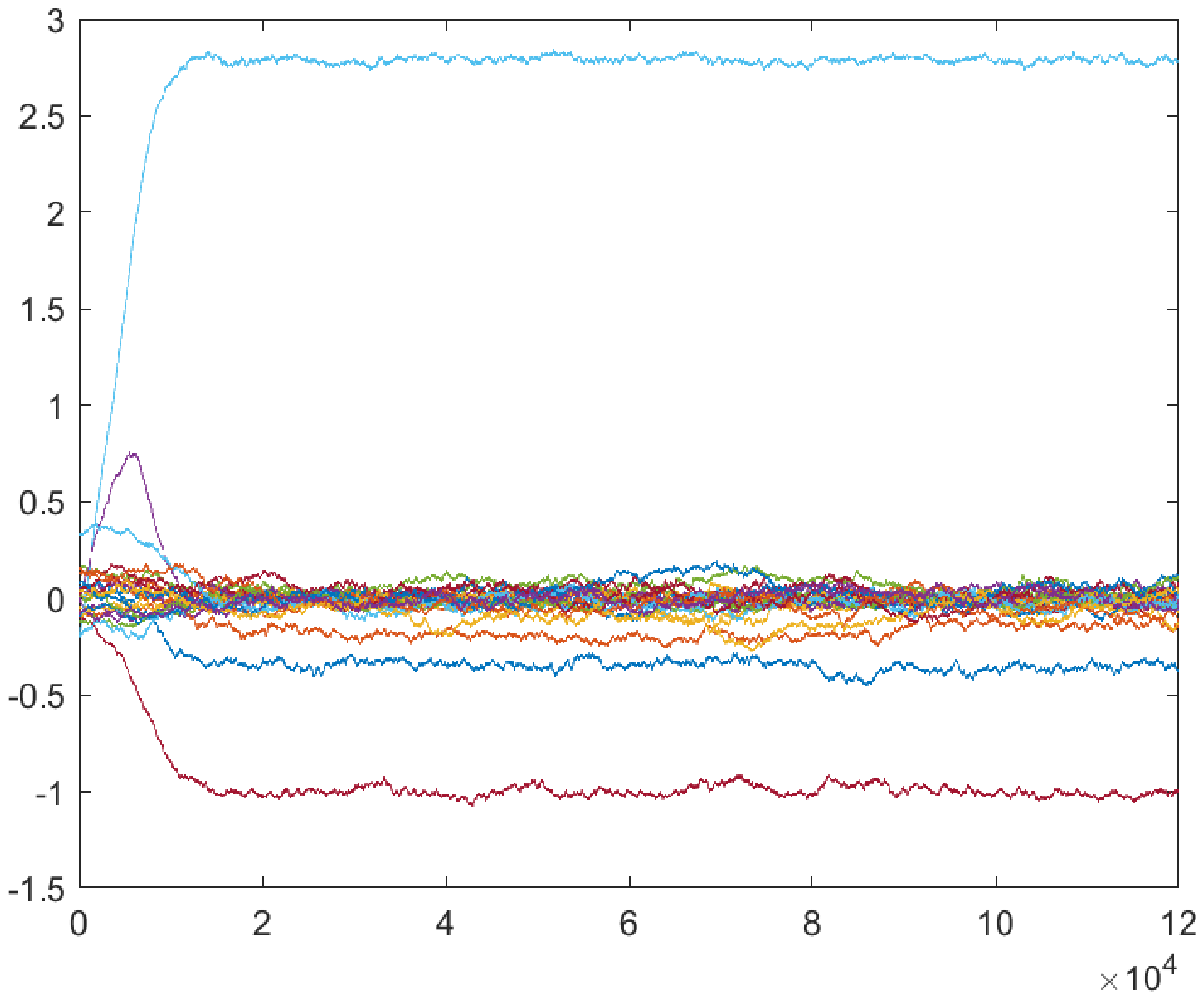}} \\
\resizebox{0.3\textwidth}{!}{\includegraphics{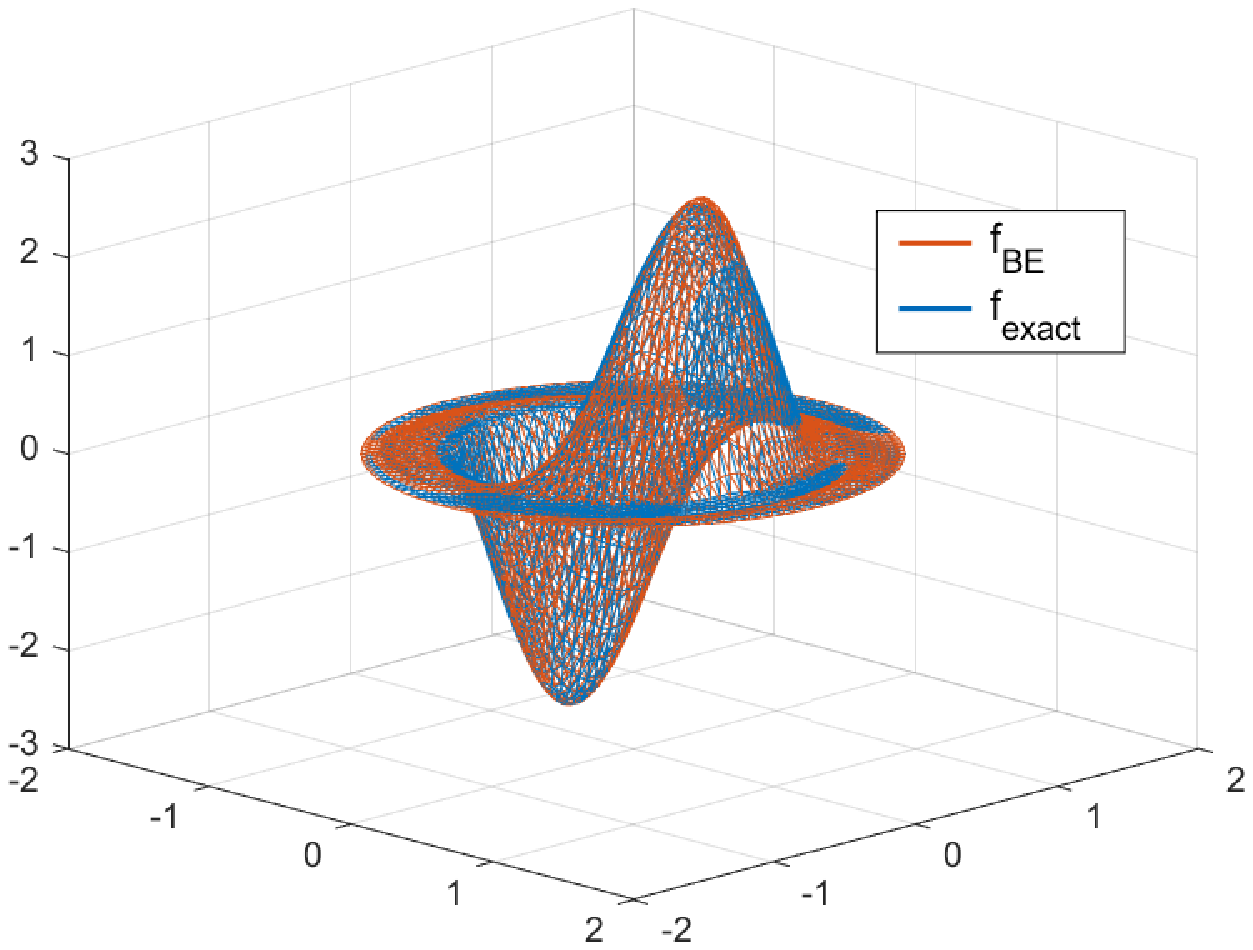}}&
\resizebox{0.3\textwidth}{!}{\includegraphics{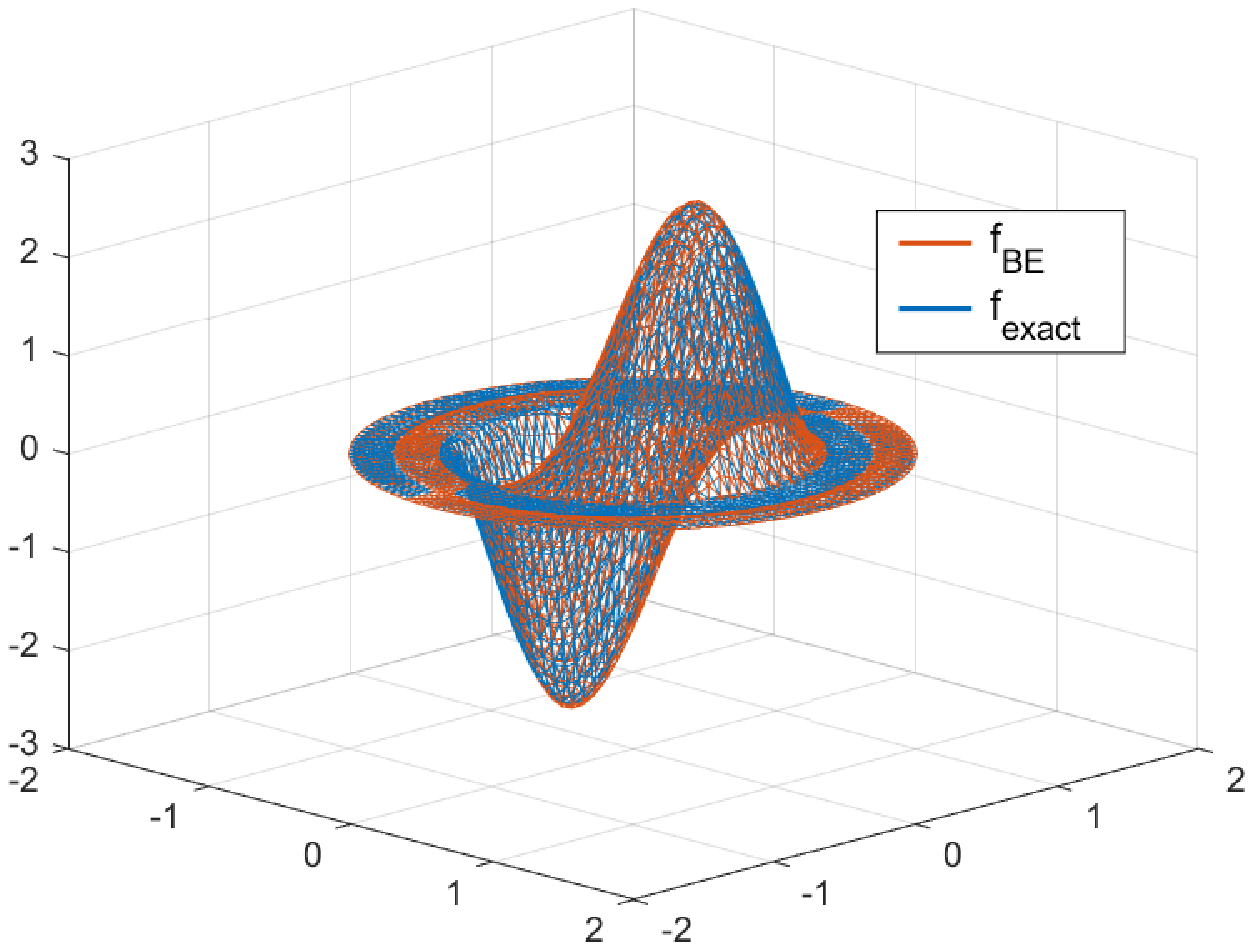}}&
\resizebox{0.3\textwidth}{!}{\includegraphics{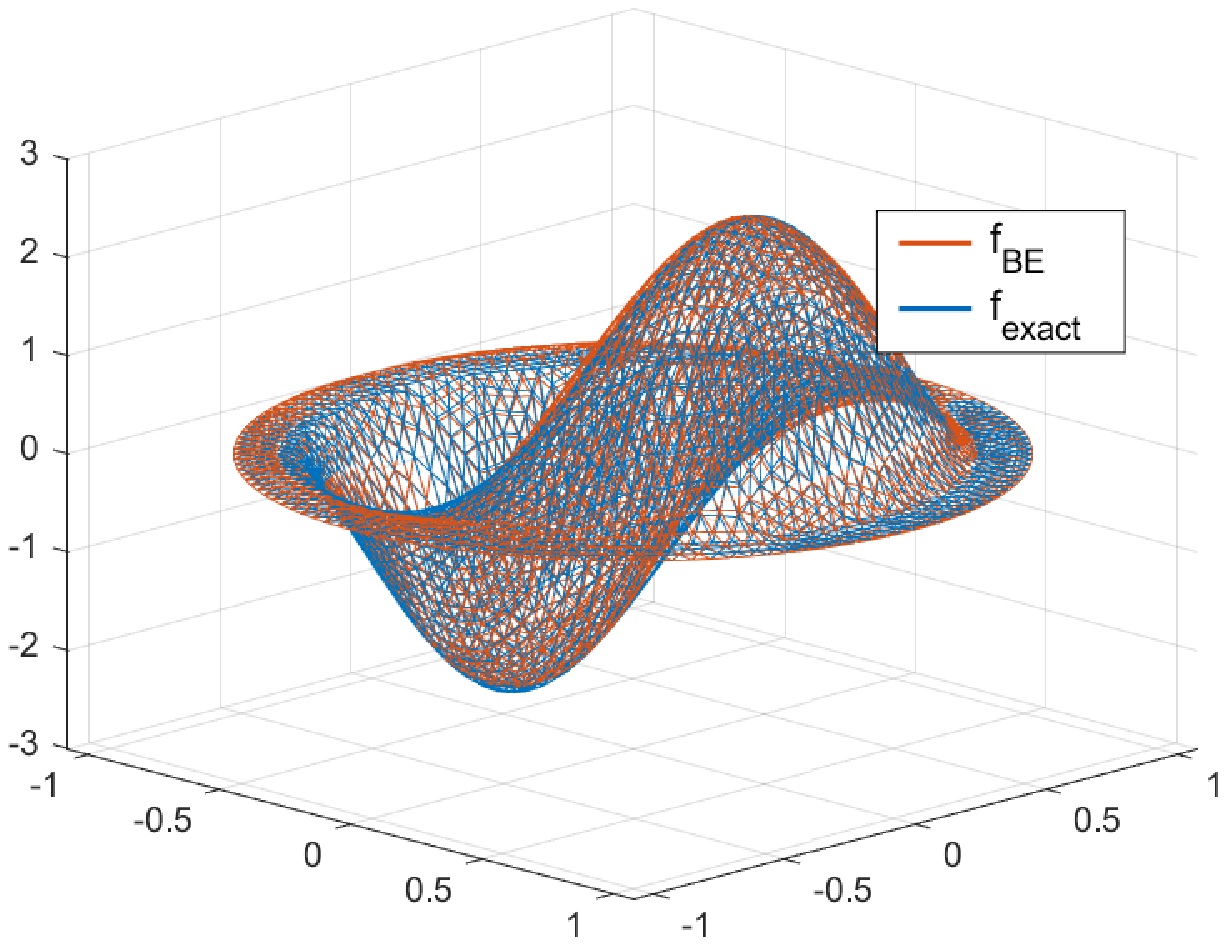}} \\
\end{tabular}
\end{center}
\caption{Example 1 (reconstructed support). First row: contour plots of the indicators for the DSM. Second row: the histograms of the coefficients for $f_{BE}$. Third row: the reconstructed $f_{BE}$ and exact $f$. Left column: $\Gamma_1$. Middle column: $\Gamma_2$. Right column: $\Gamma_3$.}
\label{fig:example1fig1}
\end{figure}

\vskip 0.2cm
\textbf{Example 2:} Let
\[
f(x)=2(0.81-(x_{1}^2+x_{2}^2))\chi_{x_{1}^2+x_{2}^2 \le 0.81},
\]
where $\chi$ is the characteristic function. The exact support of $f(x)$ is $B(0,0.9)$. The contour plots of the indicator functions by the DSM are shown in the first row of Fig.~\ref{fig:example2fig1} for $\Gamma_1$, $\Gamma_2$ and $\Gamma_3$. The radii of the discs $\hat{B}$'s are $0.9055$, $1.1180$ and $1.0817$, which are listed in Table~\ref{table1}. The histograms of the coefficients are shown in the second row of Fig.~\ref{fig:example2fig1}. The reconstructed $f_{BE}$'s and the exact $f$ are shown in the third row of Fig.~\ref{fig:example2fig1}. The errors are listed in Table~\ref{table2}. It can be seen that as the measurement aperture becomes less, the errors increase.

\begin{table}[ht]
\caption{Exact support of $f(x)$ and the radii of the discs by the DSM.}
    \centering
    \resizebox{\textwidth}{!}{
    \begin{tabular}{ c | c  c  c  c  c  }
 \Xhline{1pt}
\multirow{2}{*}{Exact support}  & Example 1 & Example 2 & Example 3 & Example 4 & Example 5 \\
&  B(0,0.9) &   B(0,0.9) & $ B$(0,0.7471)$^*$ &  $a,b=0.9,1.08$   &  B(0,0.9) \\\midrule
 $\Gamma_1$ & 1.3601  & 0.9055   &  0.8246  & 1.7205  & 0.9849  \\
 $\Gamma_2$  & 1.4213   &  1.1180 &    1.0198   &   1.5000 &    1.2166  \\
 $\Gamma_3$ &    1.0817&    1.0817 &    1.0630 &     1.2806&    1.1705 \\
 \Xhline{1pt}
\end{tabular} }
\label{table1}
\end{table}

\begin{table}[ht]
\caption{Absolute error (AE)  $\|f-f_{BE}\|_2$ and the relative error (RE) $\frac{\|f_{BE}-f\|_2}{\|f\|_2}$.}
    \centering
    \resizebox{\textwidth}{!}{
    \begin{tabular}{ c | c c c c| c c | c c| c  c | c  c   }
 \Xhline{1pt}
 & \multicolumn{4}{c|}{Example 1} & \multicolumn{2}{c|}{Example 2} & \multicolumn{2}{c|}{Example 3} & \multicolumn{2}{c|}{Example 4} & \multicolumn{2}{c}{Example 5} \\
 & AE$^e$ & RE$^e$ &       AE& RE &      AE& RE       &AE& RE&          AE& RE         &AE& RE \\ \midrule
 $\Gamma_1$ &  0.1274   & 6.02\% &   0.1184   & 5.61\% &    0.0455     &  3.06\% &   0.0735&  7.17\%  &  0.3414&  25.97\%  &   0.2127  &  13.43\%  \\
 $\Gamma_2$  &    0.0833  &  3.94\% &    0.1309  &  6.20\% &   0.0606    &  4.07\% &   0.1691 &  16.62\% &    0.3406    &  25.81\%   &   0.2697 &  17.13\%\\
 $\Gamma_3$ &   0.0346 &  1.63\% &  0.1224 &  5.79\% &      0.0667&  4.48\% &    0.2752 &  26.99\% & 0.4083&  30.88\% &        0.3021 &   19.14\% \\
 \Xhline{1pt}
\end{tabular} }
\label{table2}
\end{table}

\begin{figure}[h!]
\begin{center}
\begin{tabular}{lll}
\resizebox{0.3\textwidth}{!}{\includegraphics{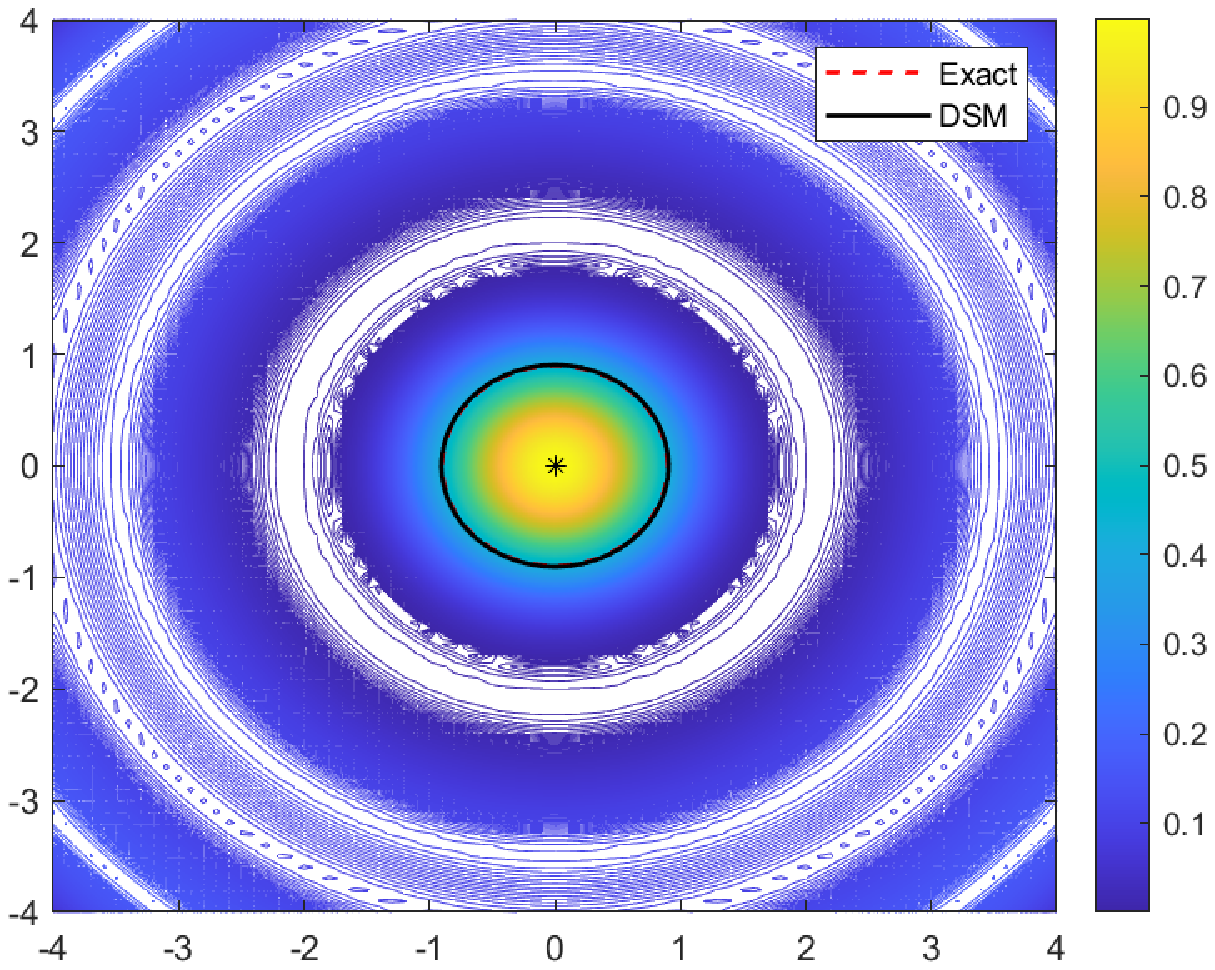}}&
\resizebox{0.3\textwidth}{!}{\includegraphics{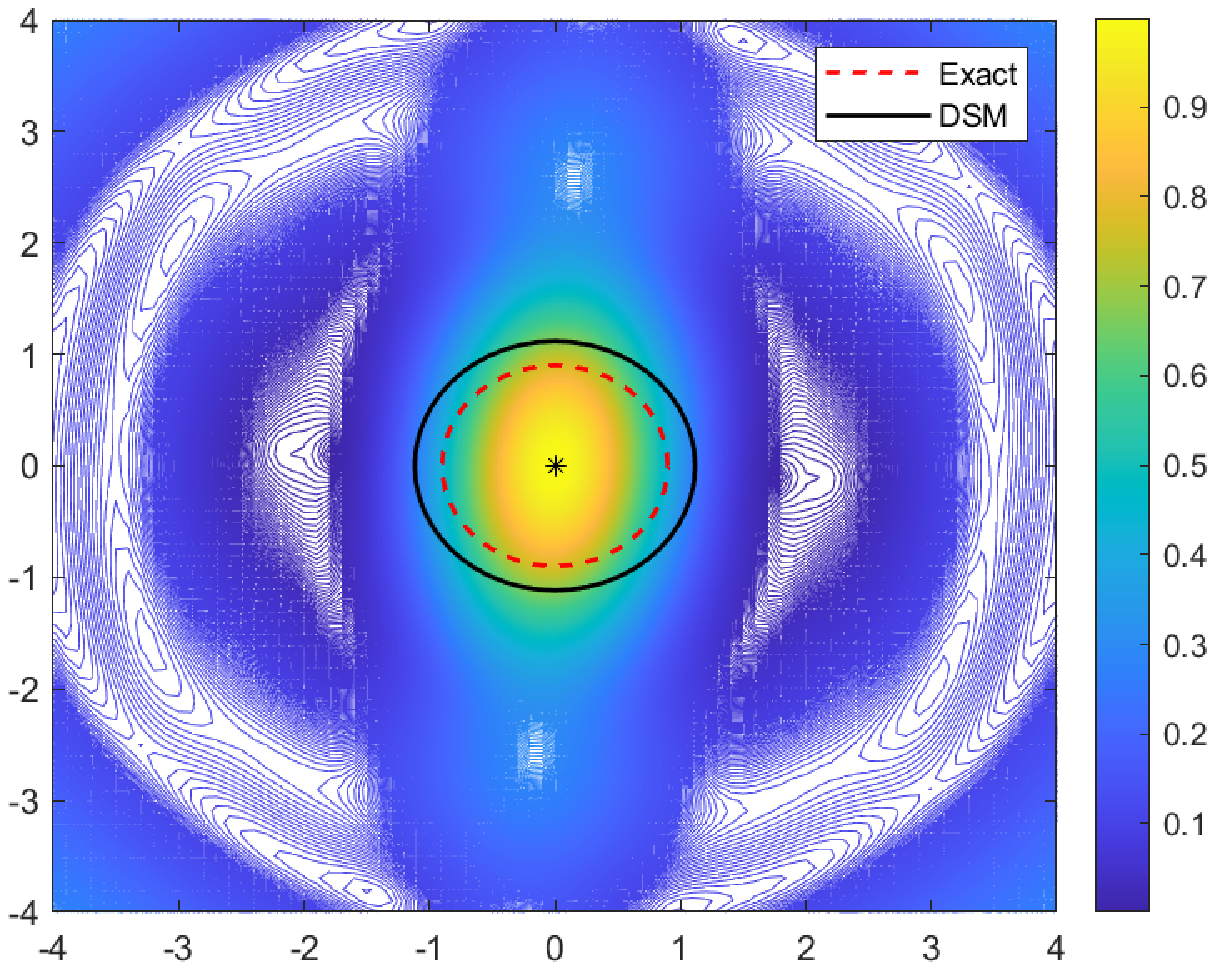}}&
\resizebox{0.3\textwidth}{!}{\includegraphics{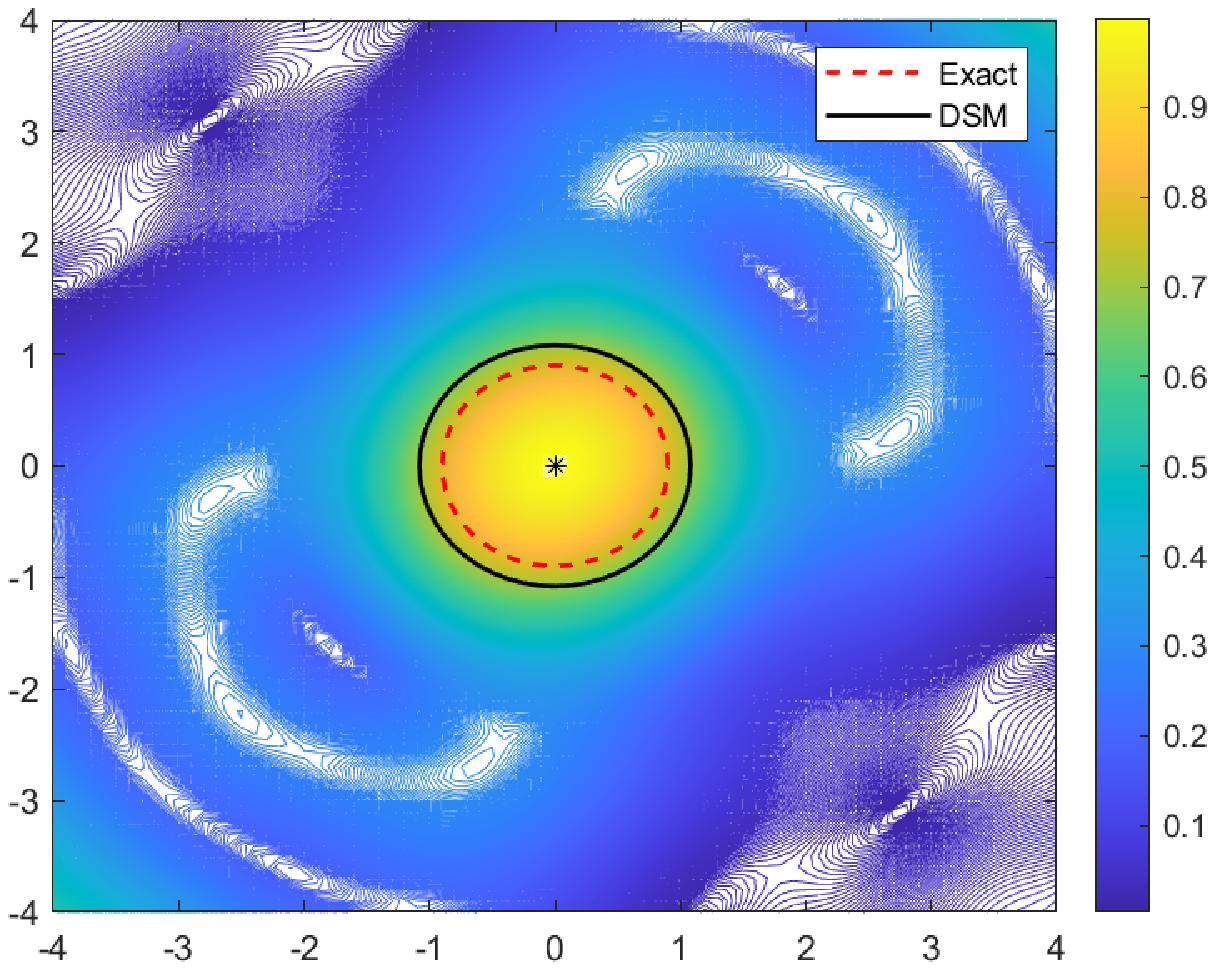}} \\
\resizebox{0.3\textwidth}{!}{\includegraphics{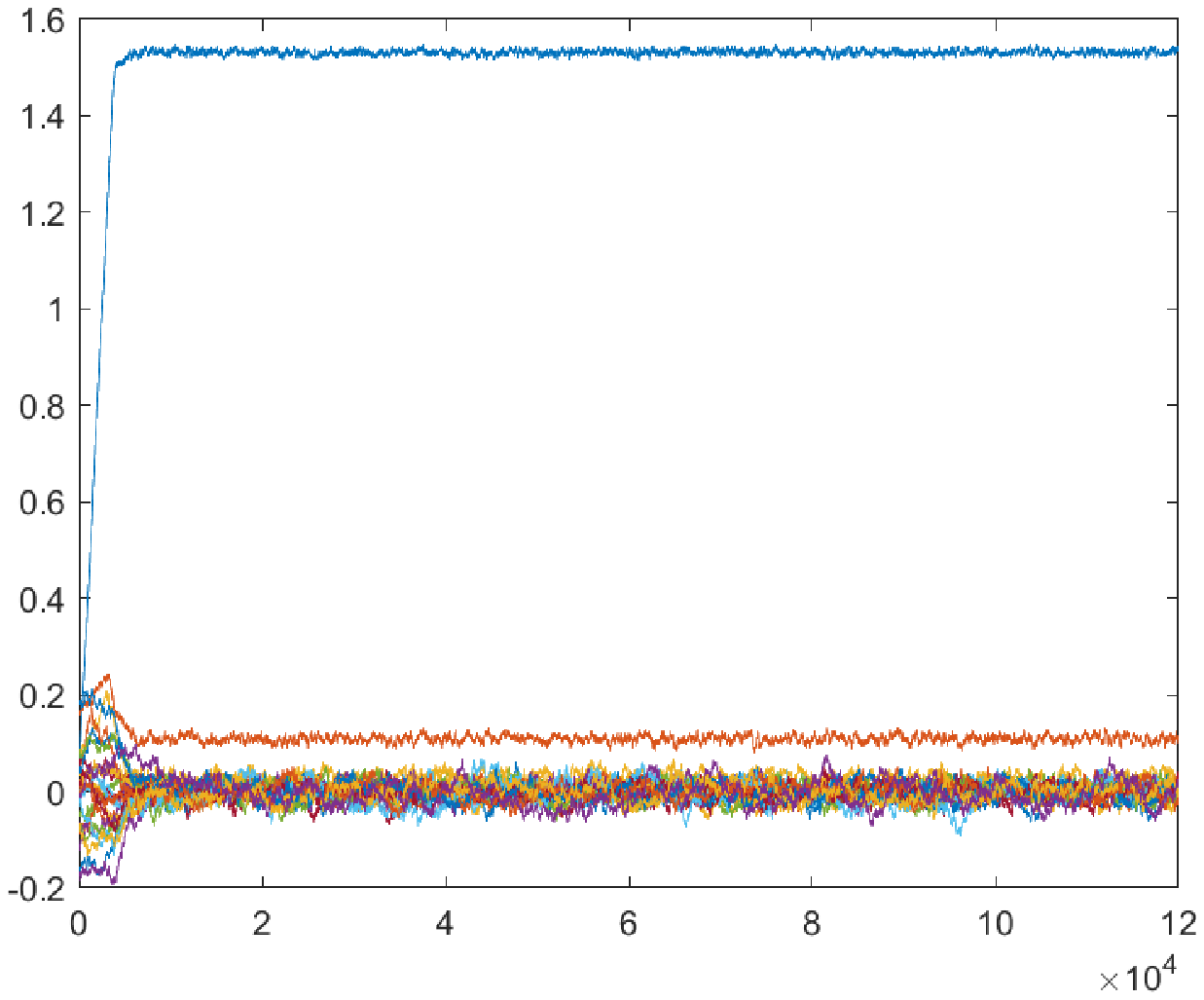}}&
\resizebox{0.3\textwidth}{!}{\includegraphics{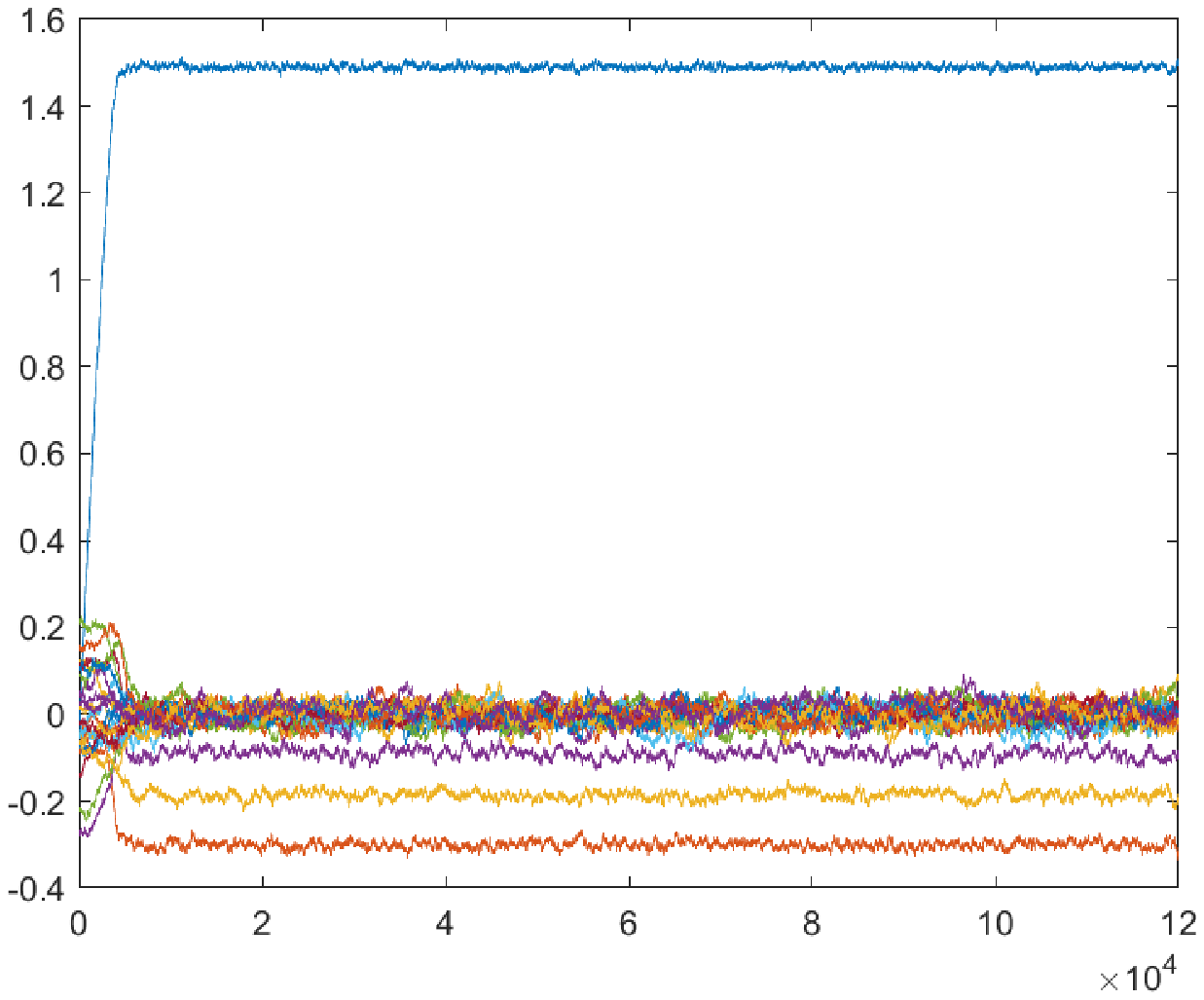}}&
\resizebox{0.3\textwidth}{!}{\includegraphics{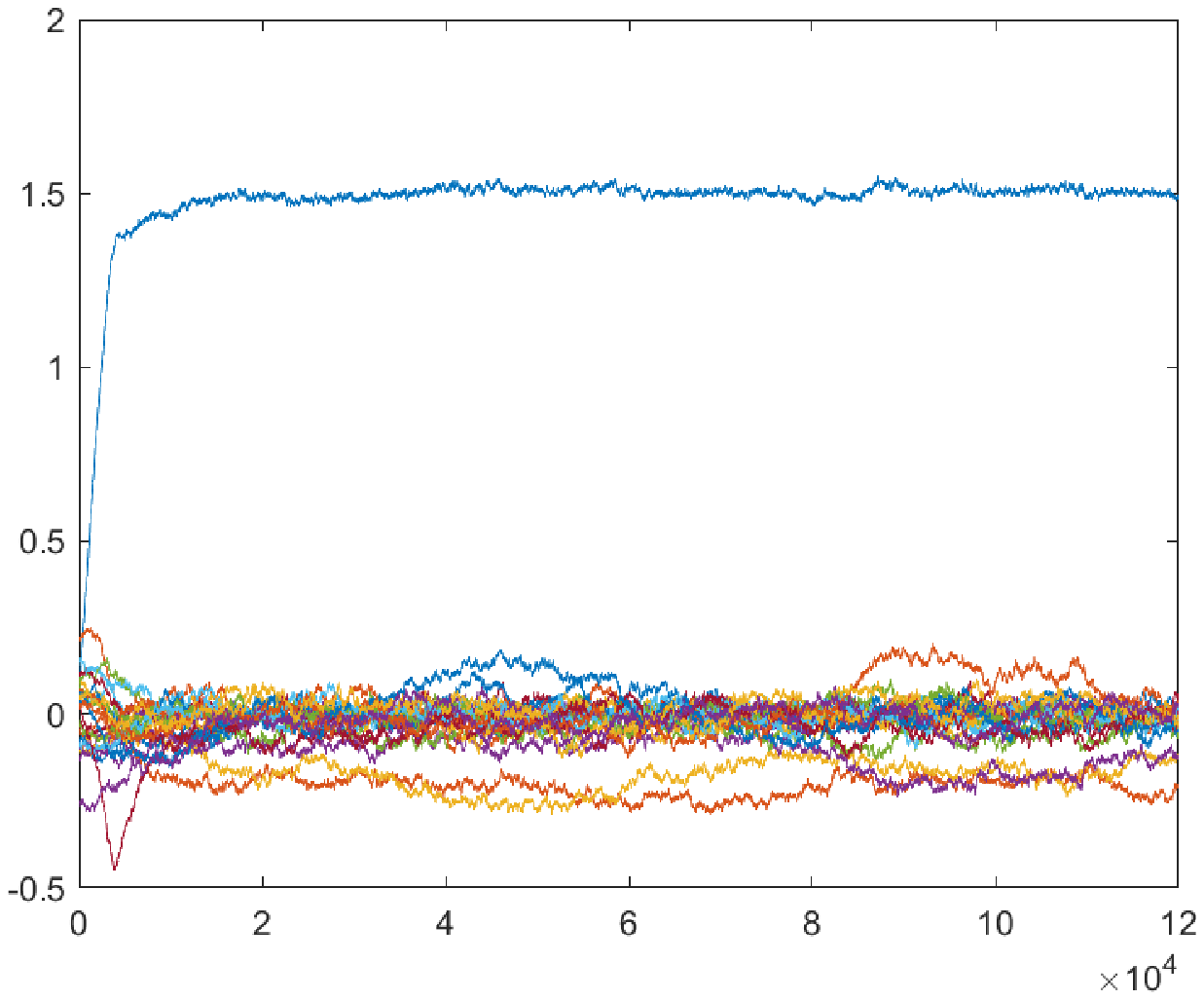}} \\
\resizebox{0.3\textwidth}{!}{\includegraphics{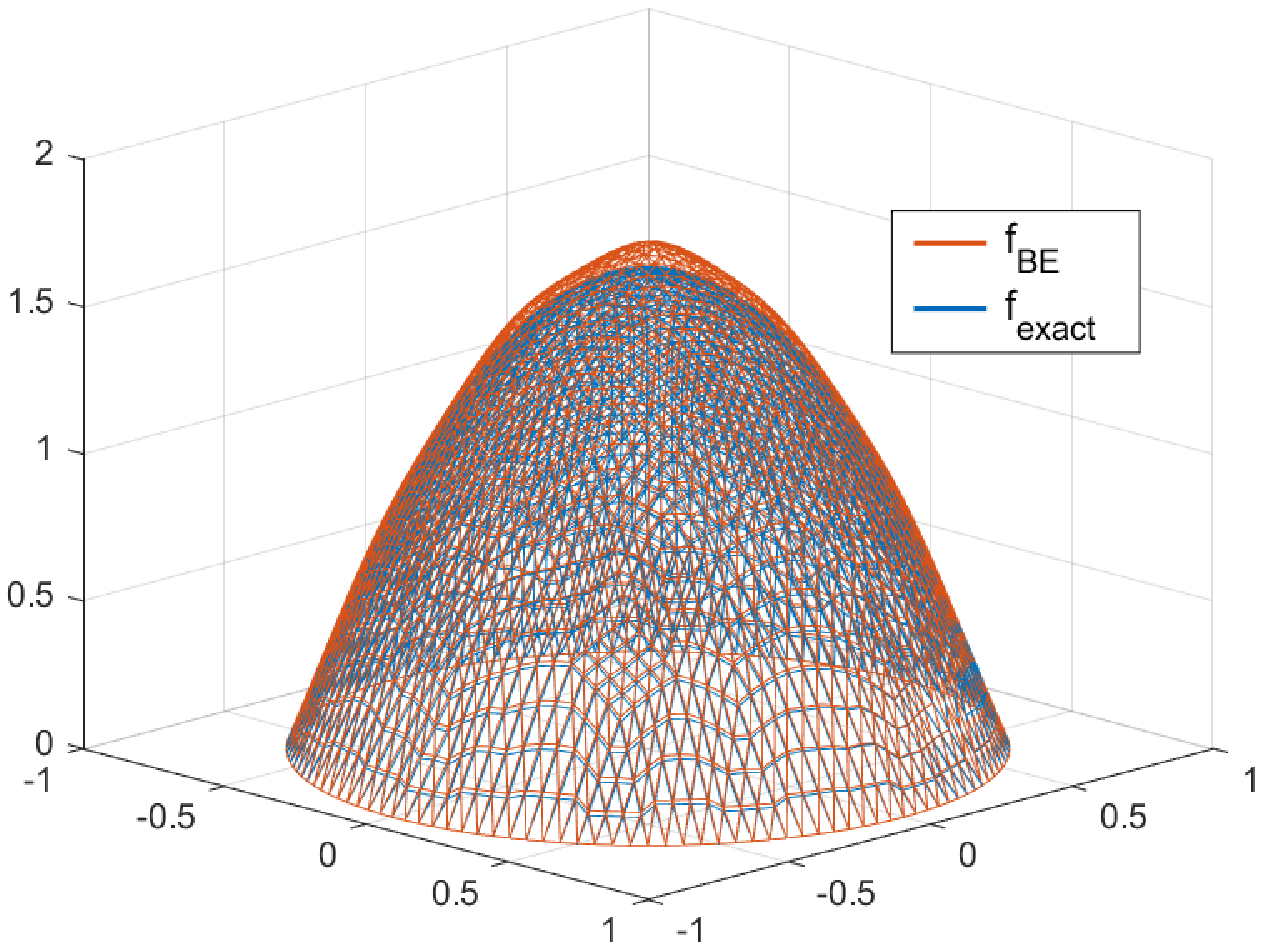}}&
\resizebox{0.3\textwidth}{!}{\includegraphics{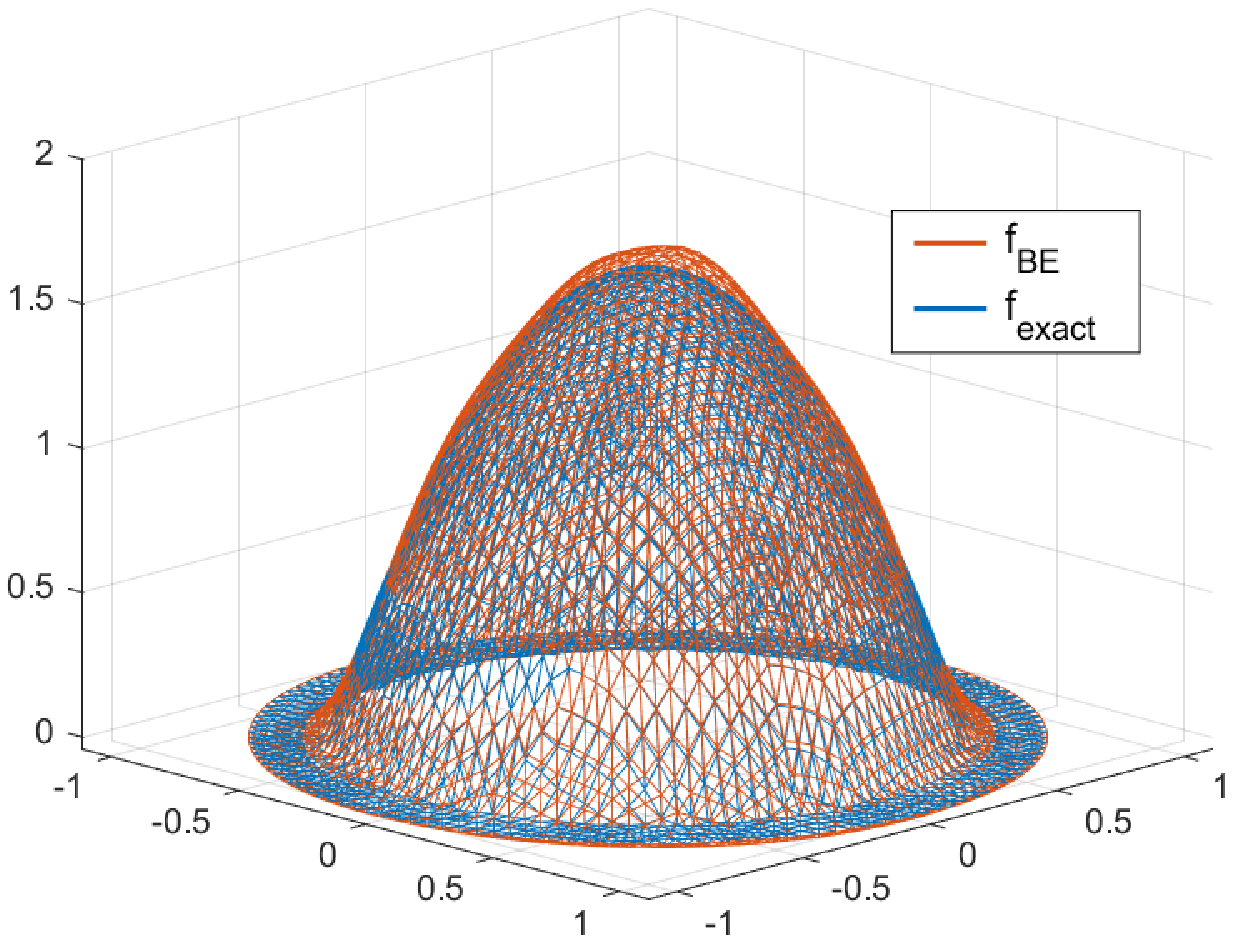}}&
\resizebox{0.3\textwidth}{!}{\includegraphics{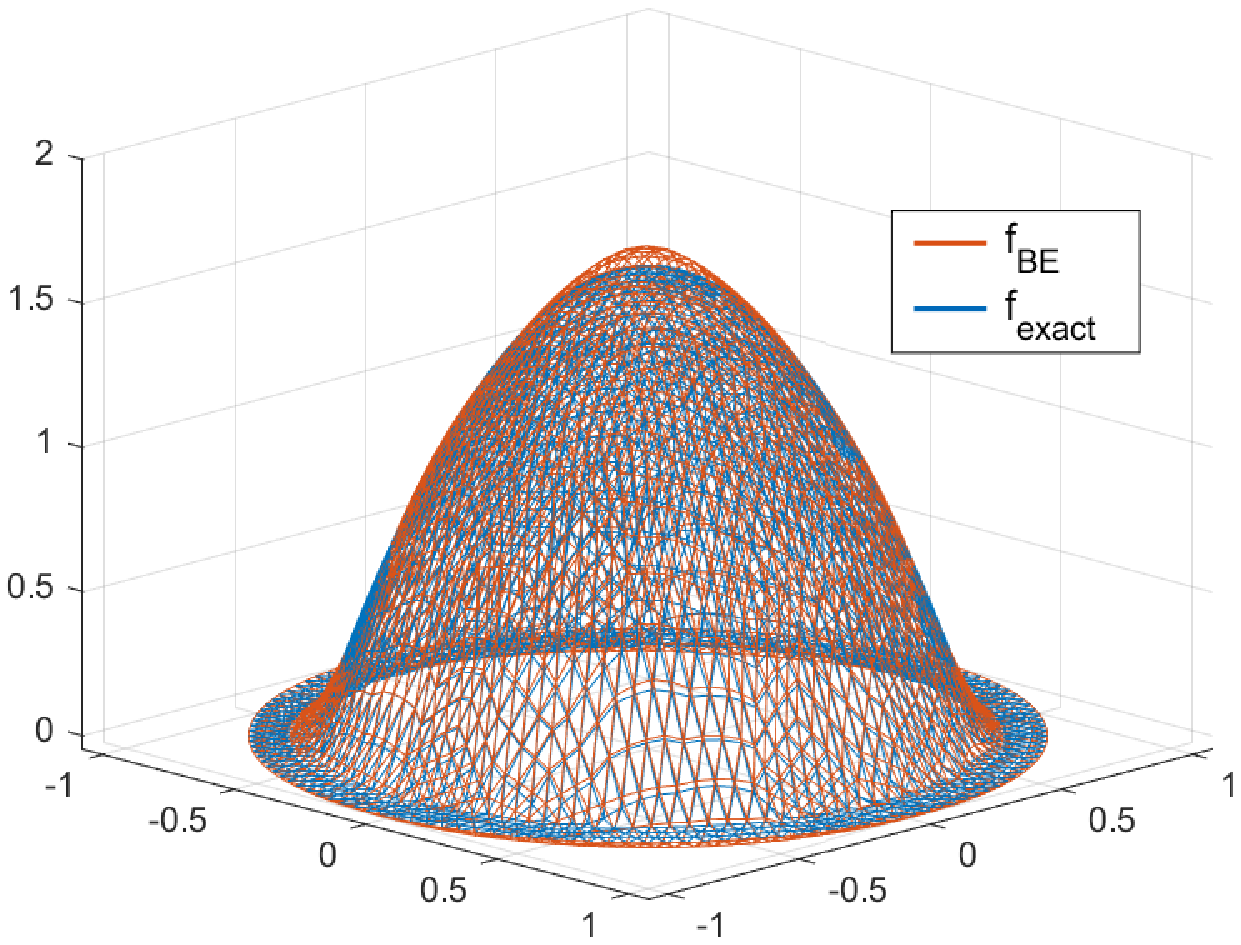}} \\
\end{tabular}
\end{center}
\caption{Example 2. First row: contour plots of the indicators for the DSM. Second row: the histograms of the coefficients for $f_{BE}$. Third row: the reconstructed $f_{BE}$ and exact $f$. Left column: $\Gamma_1$. Middle column: $\Gamma_2$. Right column: $\Gamma_3$.}
\label{fig:example2fig1}
\end{figure}

\vskip 0.2cm
\textbf{Example 3:} Let
\[
f(x)=5\exp(-45x_1^2-30x_2^2)).
\]
In this case, $f(x) \ne 0$ for all $x\in \mathbb{R}^2$. However, $f(x)$ is very close to $0$ when $|x|$ is large and the approximation \eqref{eigenfuncexpansion} for $f(x)$ is still valid approximately for $\hat{B}$ large enough. We consider a rough support of $f(x)$: $B^*=\{x\in \mathbb{R}^2| |f(x)|\leq 10^{-10}\}$. We have $B^* \approx B(0,0.7471)$. The contour plots of the indicator functions by the DSM are shown in the first row of Fig.~\ref{fig:example3fig1} for $\Gamma_1$, $\Gamma_2$ and $\Gamma_3$. The reconstructed domains $\hat{B}$ contains $B^*$ and are close to it for all three apertures. The radii of the reconstructed discs $\hat{B}$'s are $0.8246$, $1.0198$ and $1.0630$, which are listed in Table~\ref{table1}. The histograms of the coefficients are shown in the second row of Fig.~\ref{fig:example3fig1}. The reconstructed $f_{BE}$'s and the exact $f$ are shown in the third row of Fig.~\ref{fig:example3fig1}. The errors are listed in Table~\ref{table2}. Again when the measurement aperture becomes less the errors increase.

\begin{figure}[h!]
\begin{center}
\begin{tabular}{lll}
\resizebox{0.3\textwidth}{!}{\includegraphics{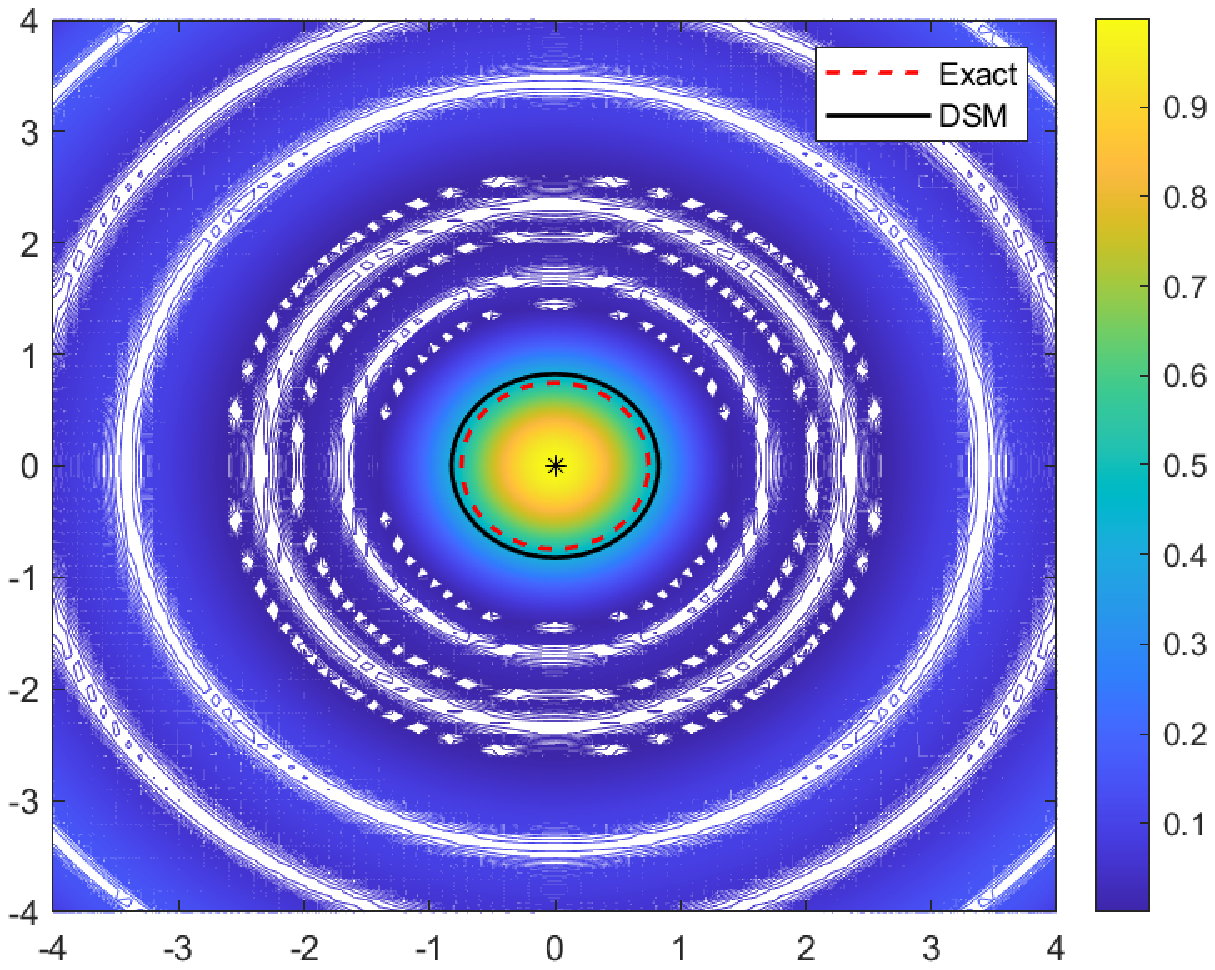}}&
\resizebox{0.3\textwidth}{!}{\includegraphics{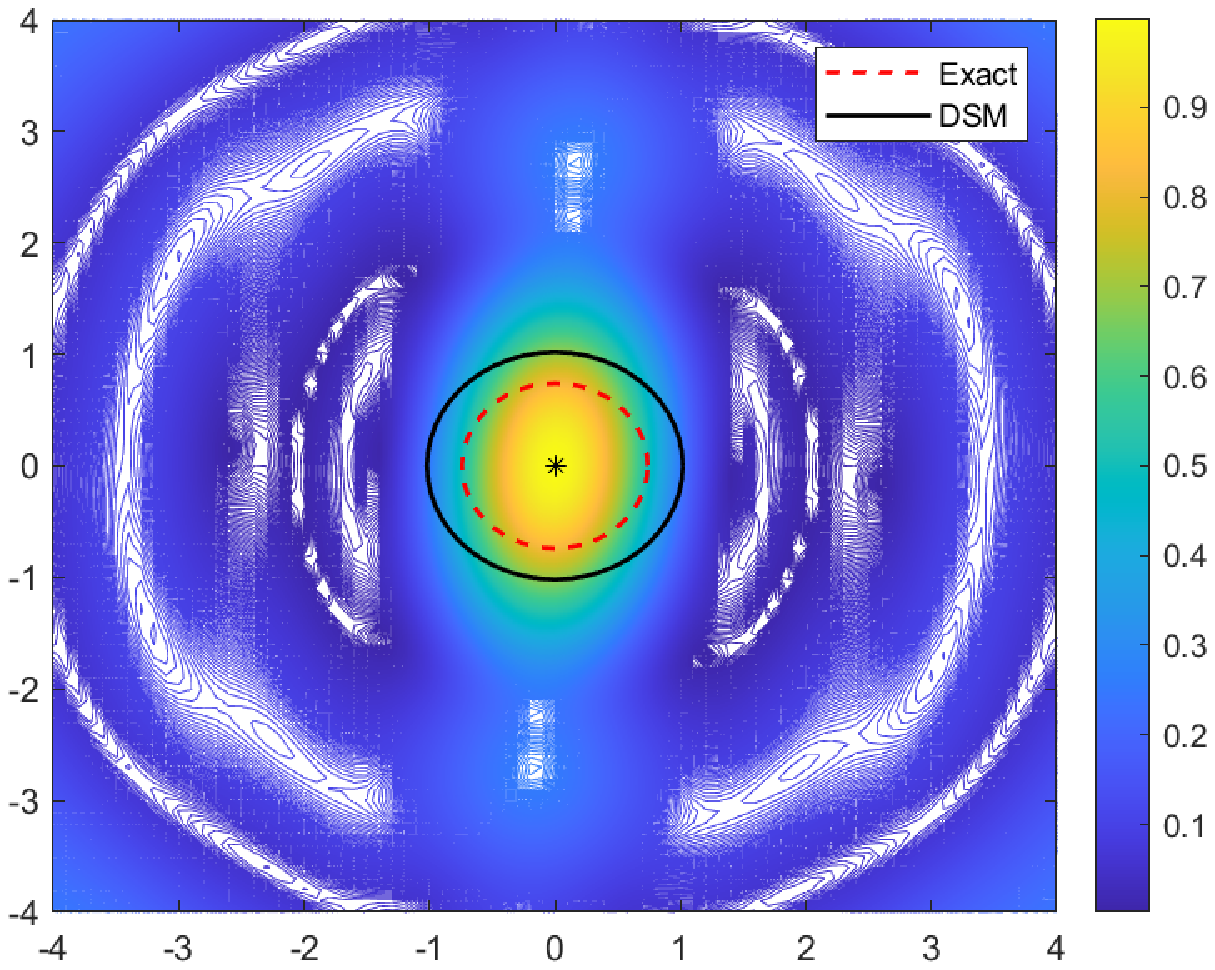}}&
\resizebox{0.3\textwidth}{!}{\includegraphics{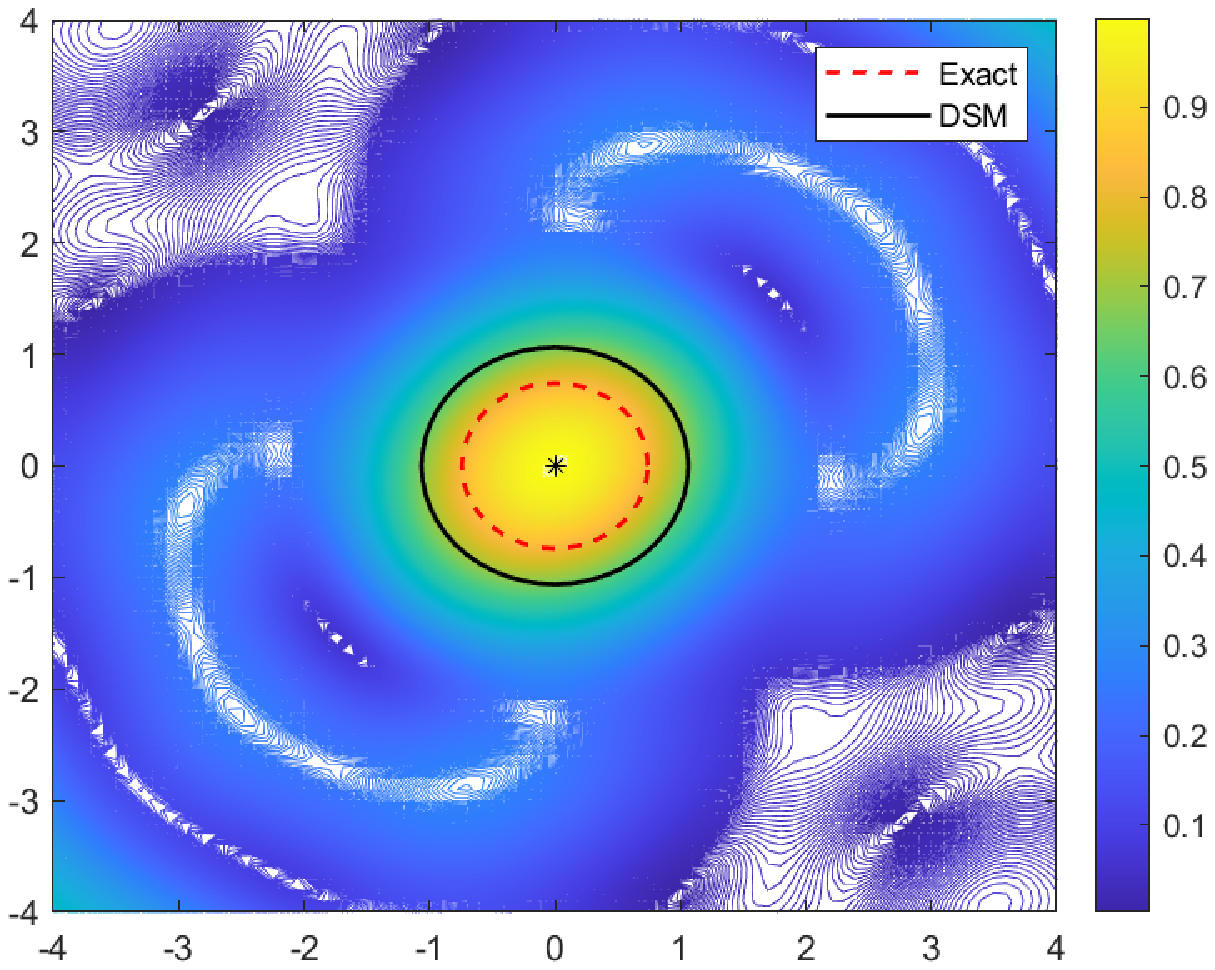}} \\
\resizebox{0.3\textwidth}{!}{\includegraphics{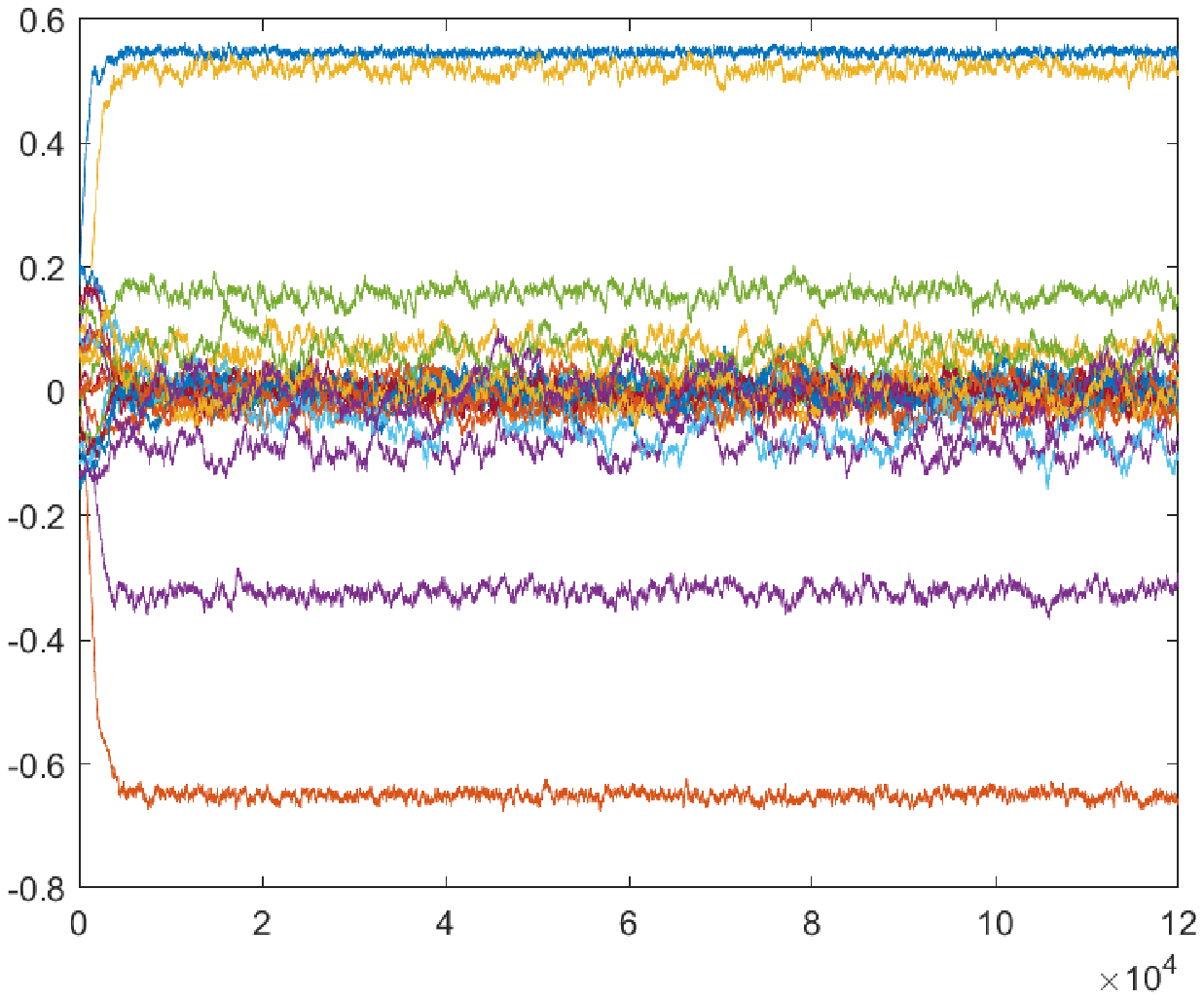}}&
\resizebox{0.3\textwidth}{!}{\includegraphics{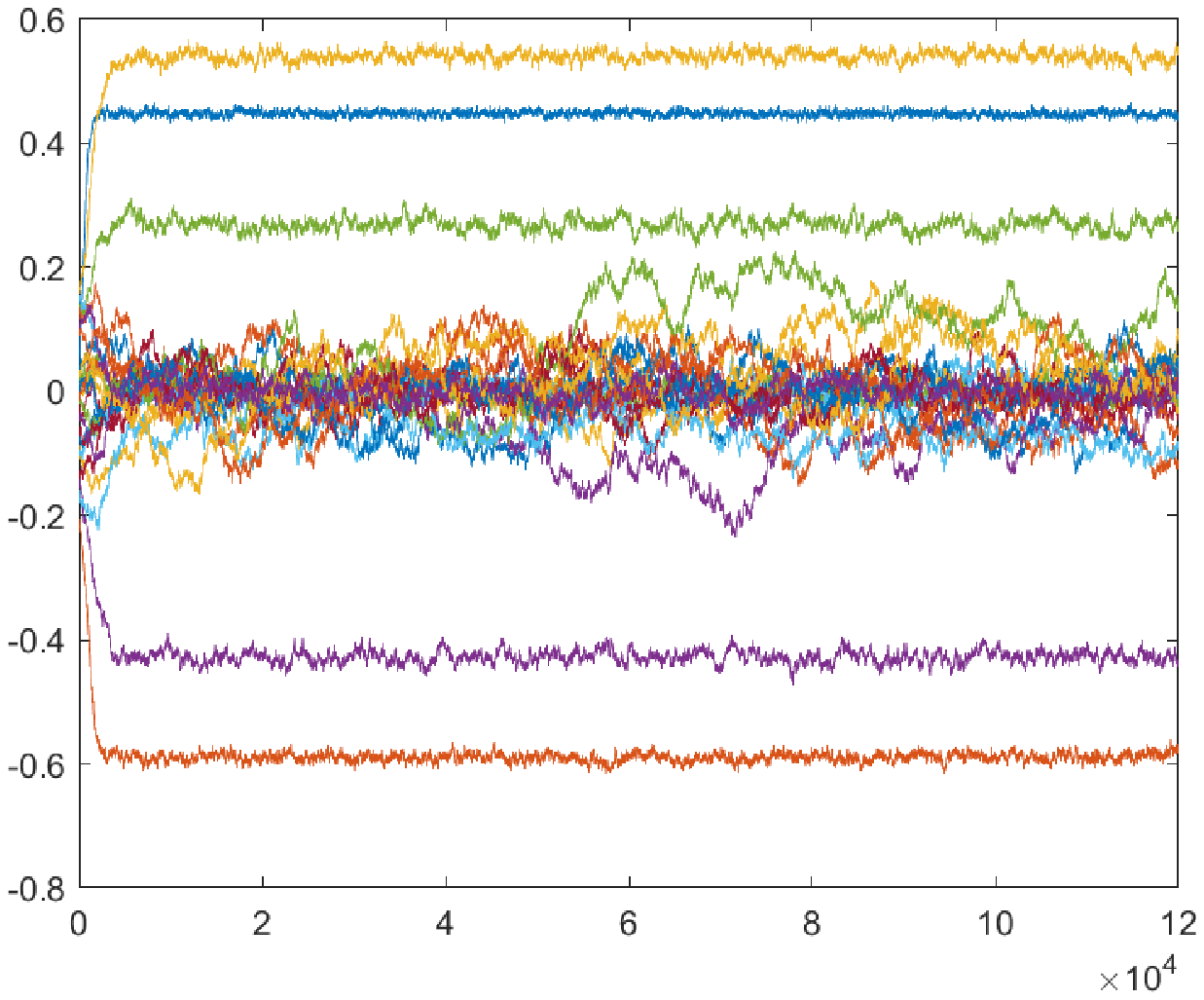}}&
\resizebox{0.3\textwidth}{!}{\includegraphics{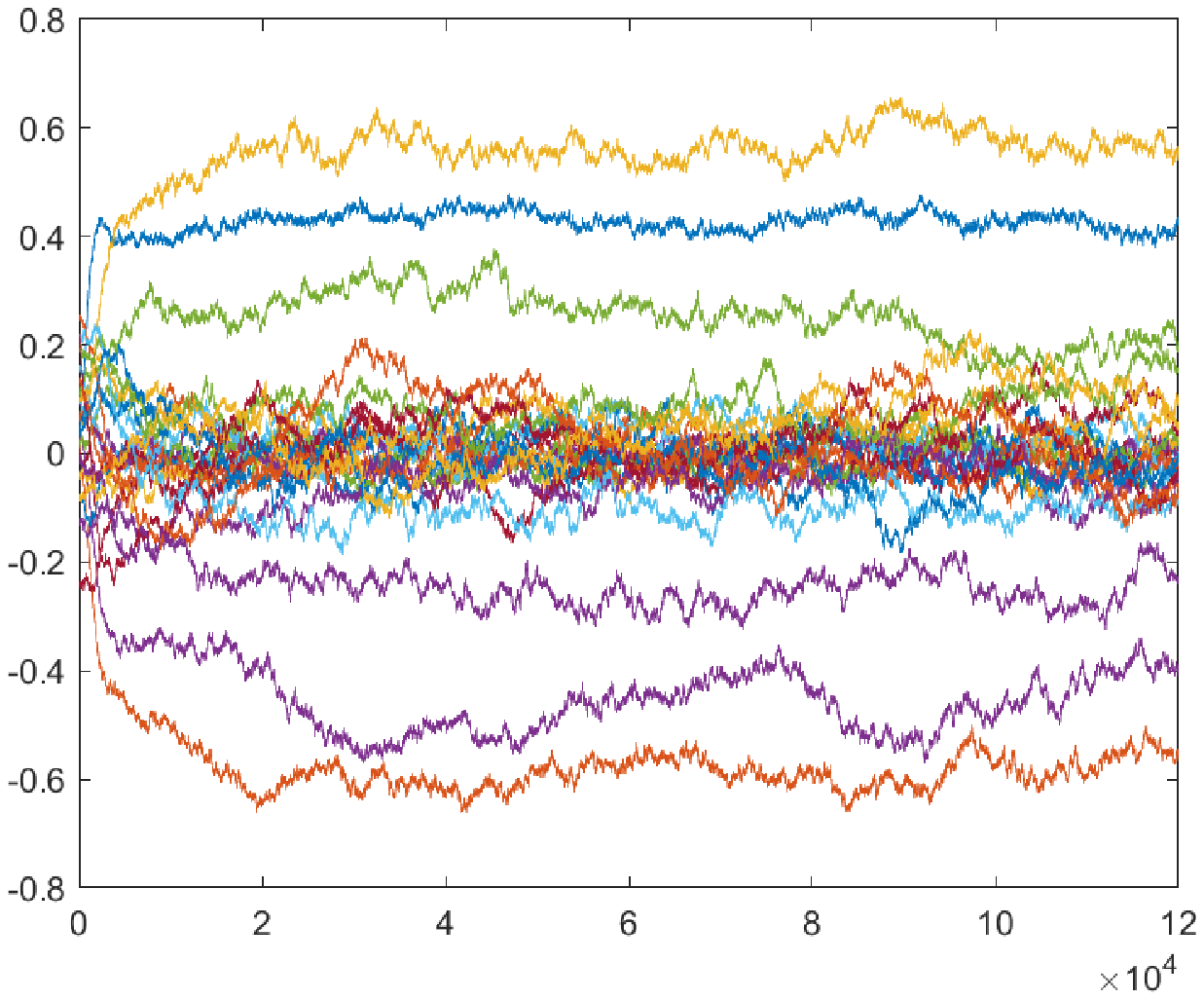}} \\
\resizebox{0.3\textwidth}{!}{\includegraphics{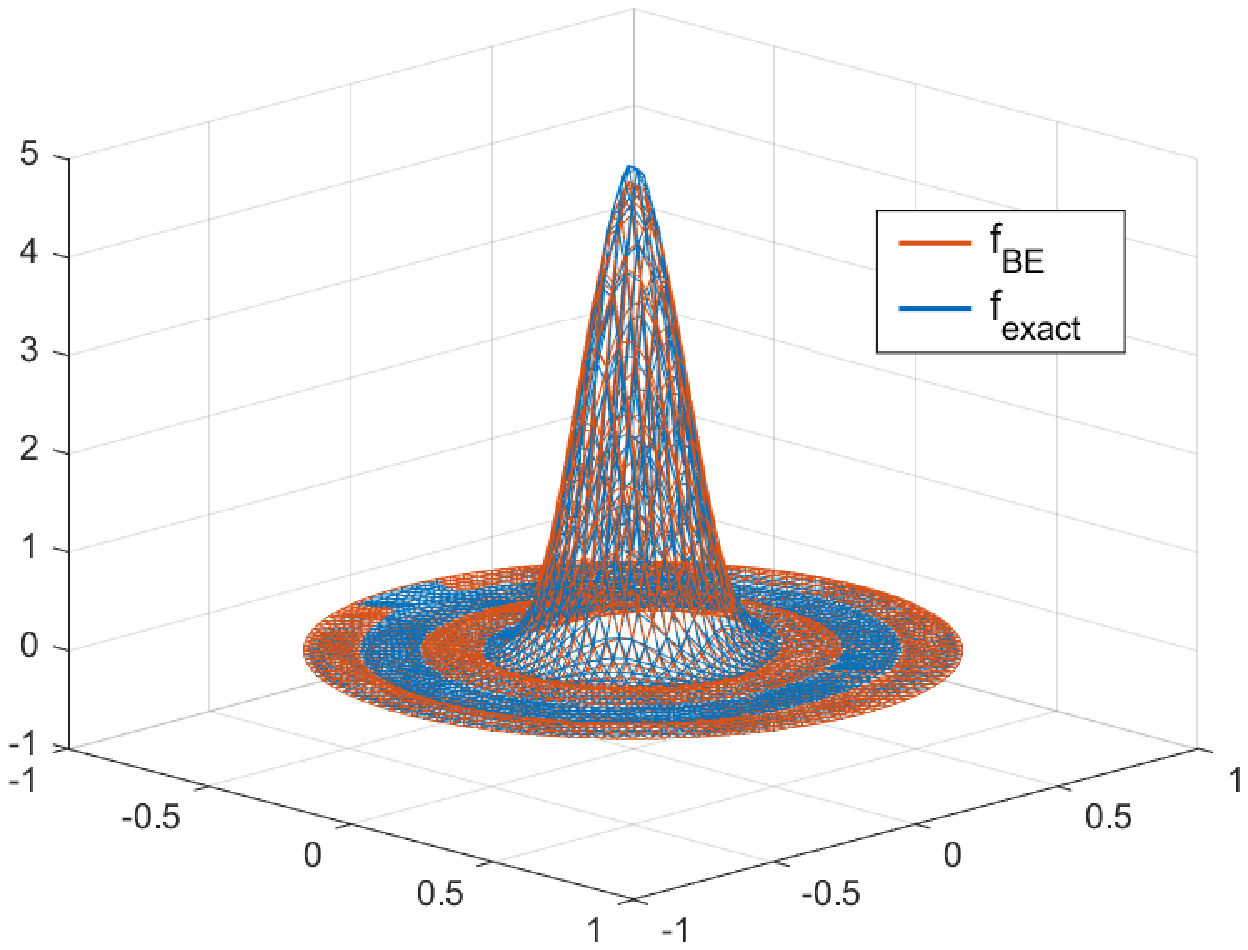}}&
\resizebox{0.3\textwidth}{!}{\includegraphics{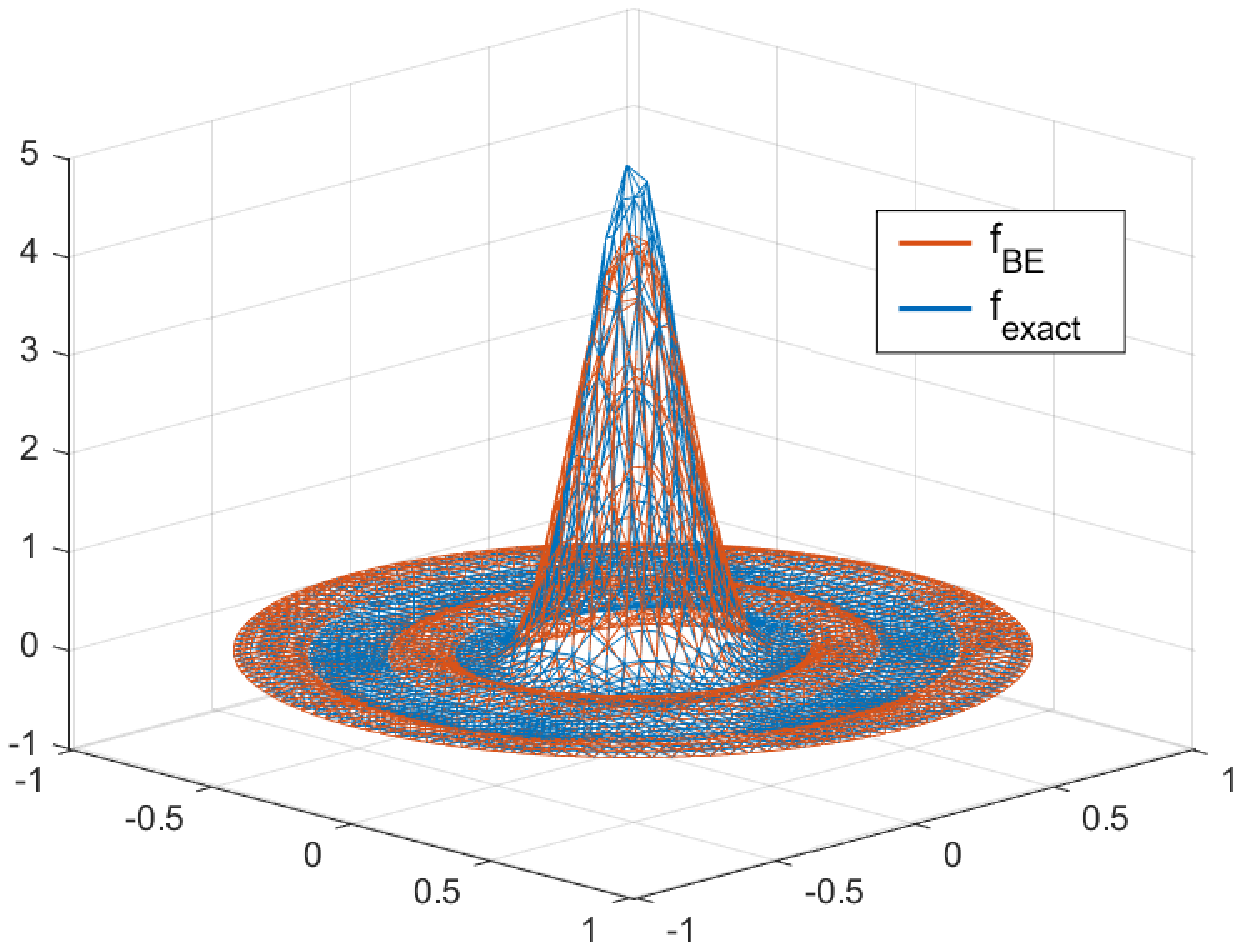}}&
\resizebox{0.3\textwidth}{!}{\includegraphics{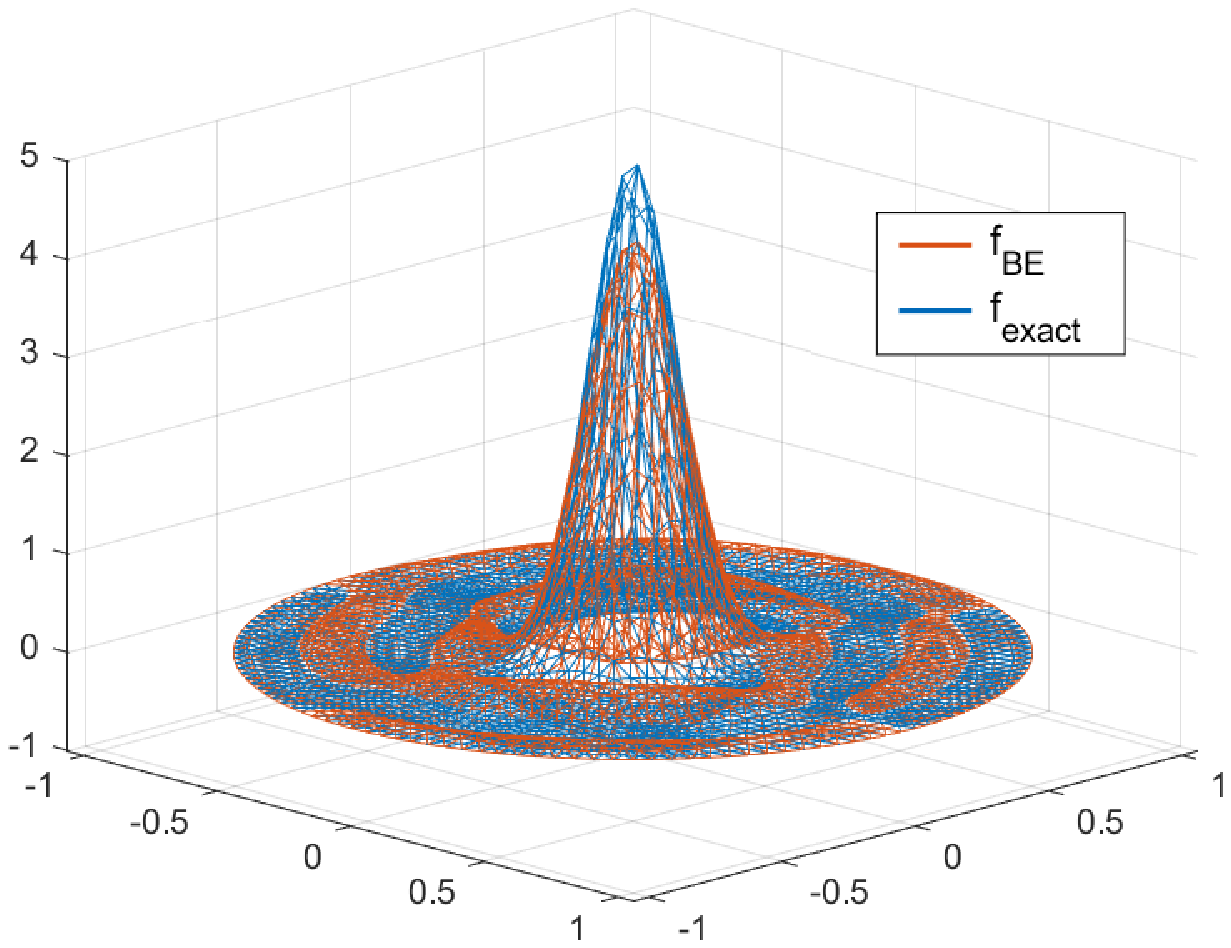}} \\
\end{tabular}
\end{center}
\caption{Example 3. First row: contour plots of the indicators for the DSM. Second row: the histograms of the coefficients for $f_{BE}$. Third row: the reconstructed $f_{BE}$ and exact $f$. Left column: $\Gamma_1$. Middle column: $\Gamma_2$. Right column: $\Gamma_3$.}
\label{fig:example3fig1}
\end{figure}

\vskip 0.2cm
\textbf{Example 4:} Let
\[
f(x)=15x_1x_2(0.81-(x_{1}^2+(x_{2}/1.2)^2))\chi_{\{(x_{1}^2+(x_{2}/1.2)^2)<=0.81\}}.
\]
The compact support of $f(x)$ is an ellipse with minor radius 0.9 and major radius 1.08. The approximate discs by the DSM (first row of Fig.~\ref{fig:example4fig1}) provide reliable estimates for the support $f(x)$, which are given in Table~\ref{table1}. The histograms of the coefficients are shown in the second row of Fig.~\ref{fig:example4fig1}. The reconstructed $f_{BE}$'s and the exact $f$ are shown in the third row of Fig.~\ref{fig:example4fig1}. The errors are listed in Table~\ref{table2}. Again the errors increase as the measurement aperture becomes less.
\begin{figure}[h!]
\begin{center}
\begin{tabular}{lll}
\resizebox{0.3\textwidth}{!}{\includegraphics{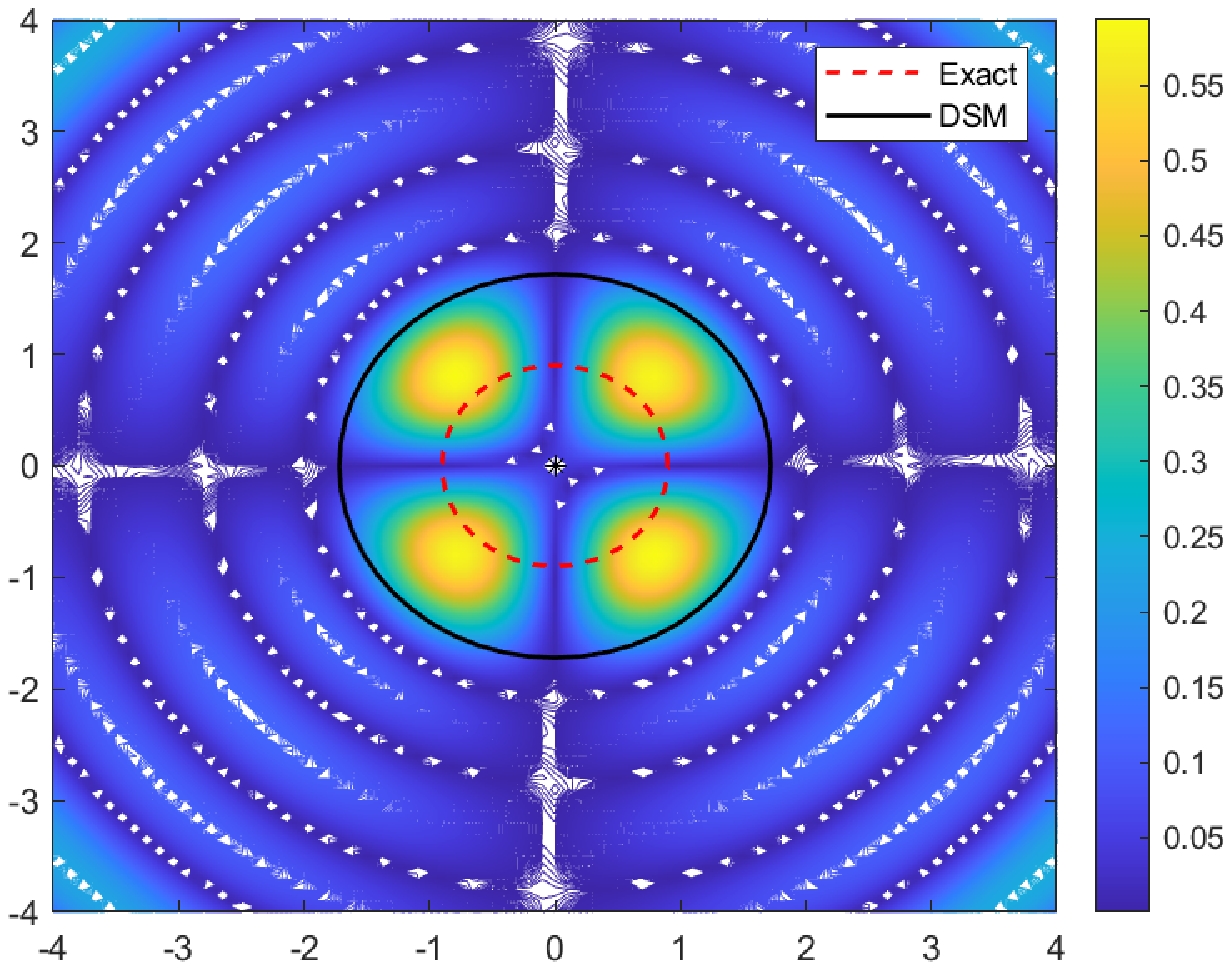}}&
\resizebox{0.3\textwidth}{!}{\includegraphics{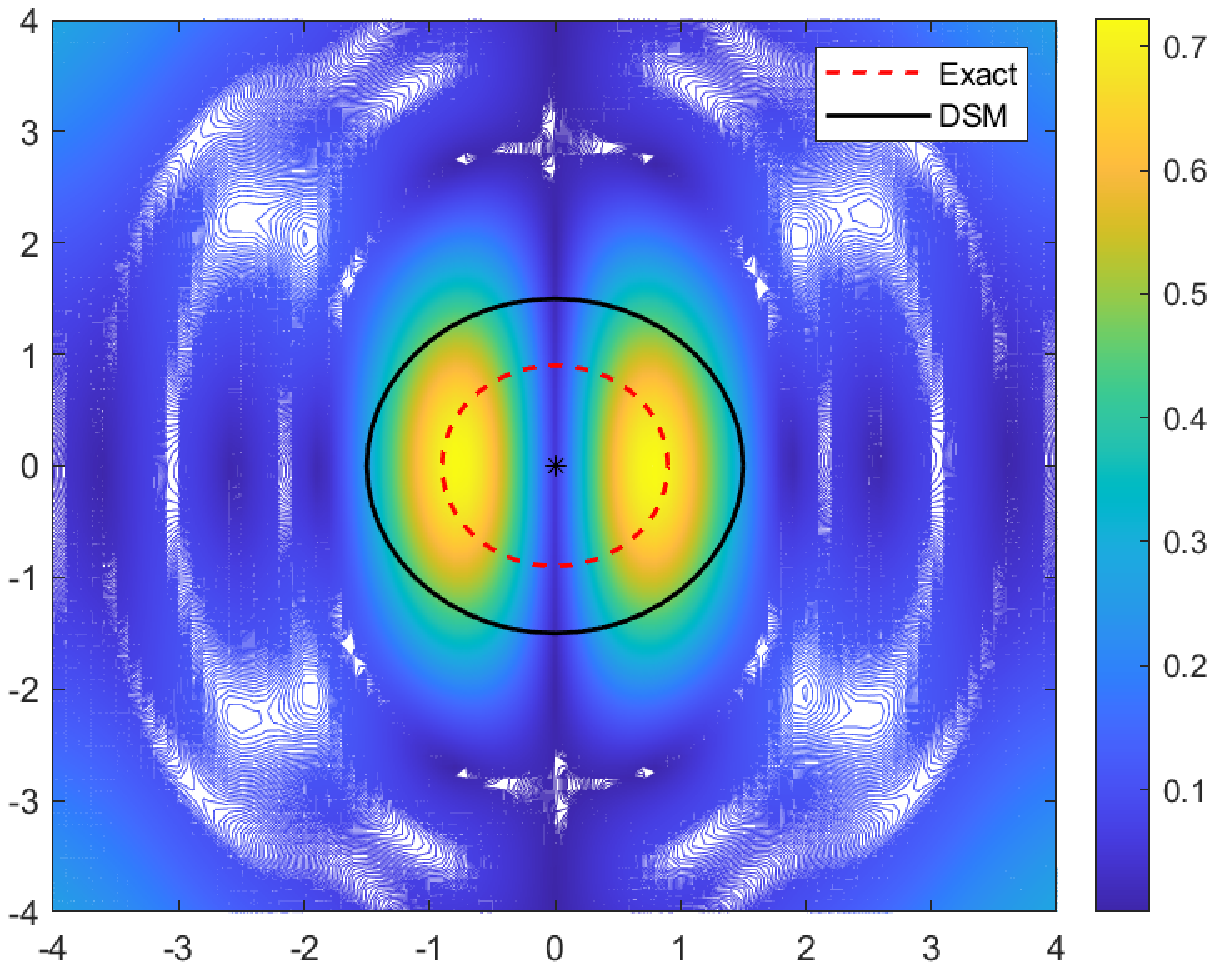}}&
\resizebox{0.3\textwidth}{!}{\includegraphics{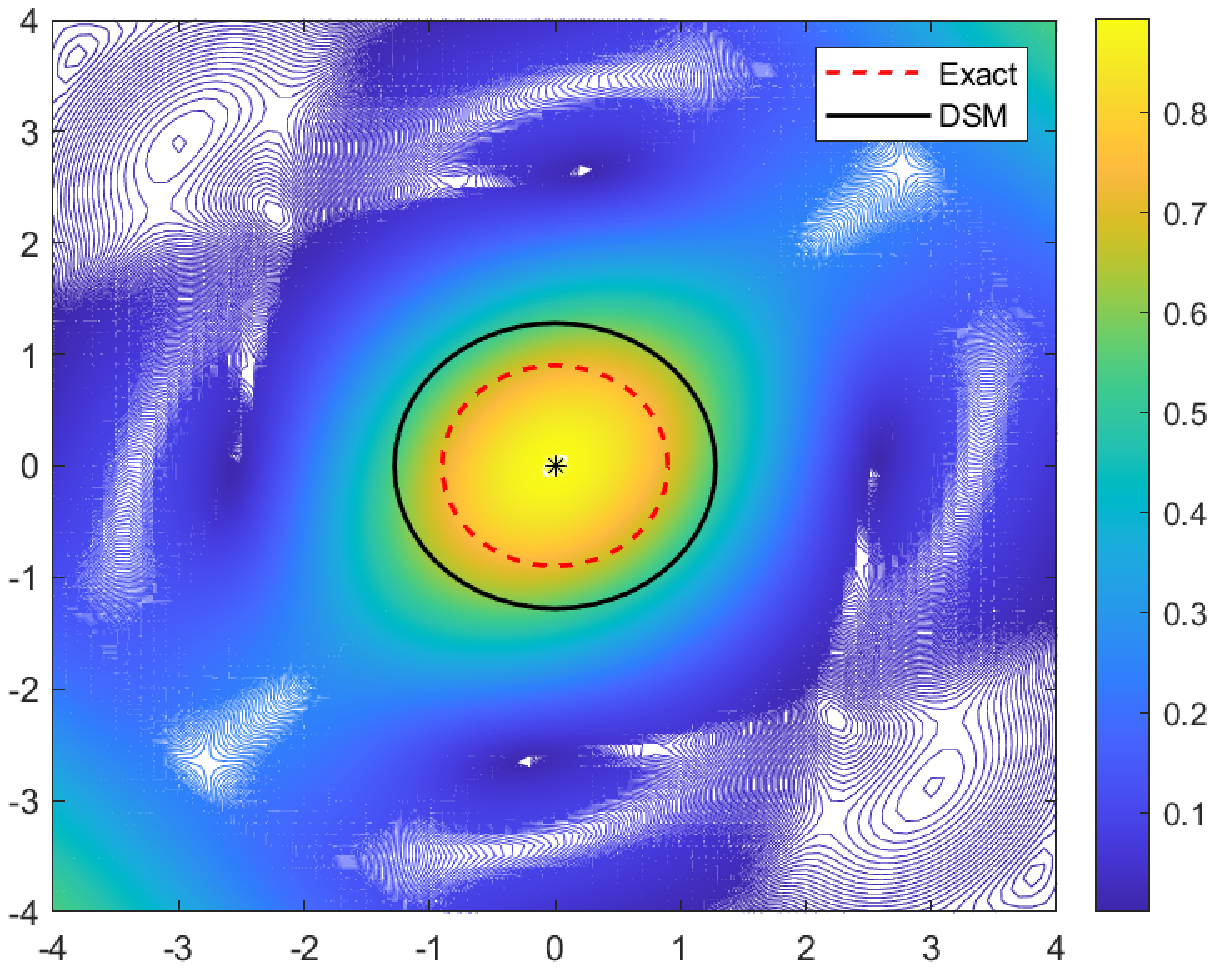}} \\
\resizebox{0.3\textwidth}{!}{\includegraphics{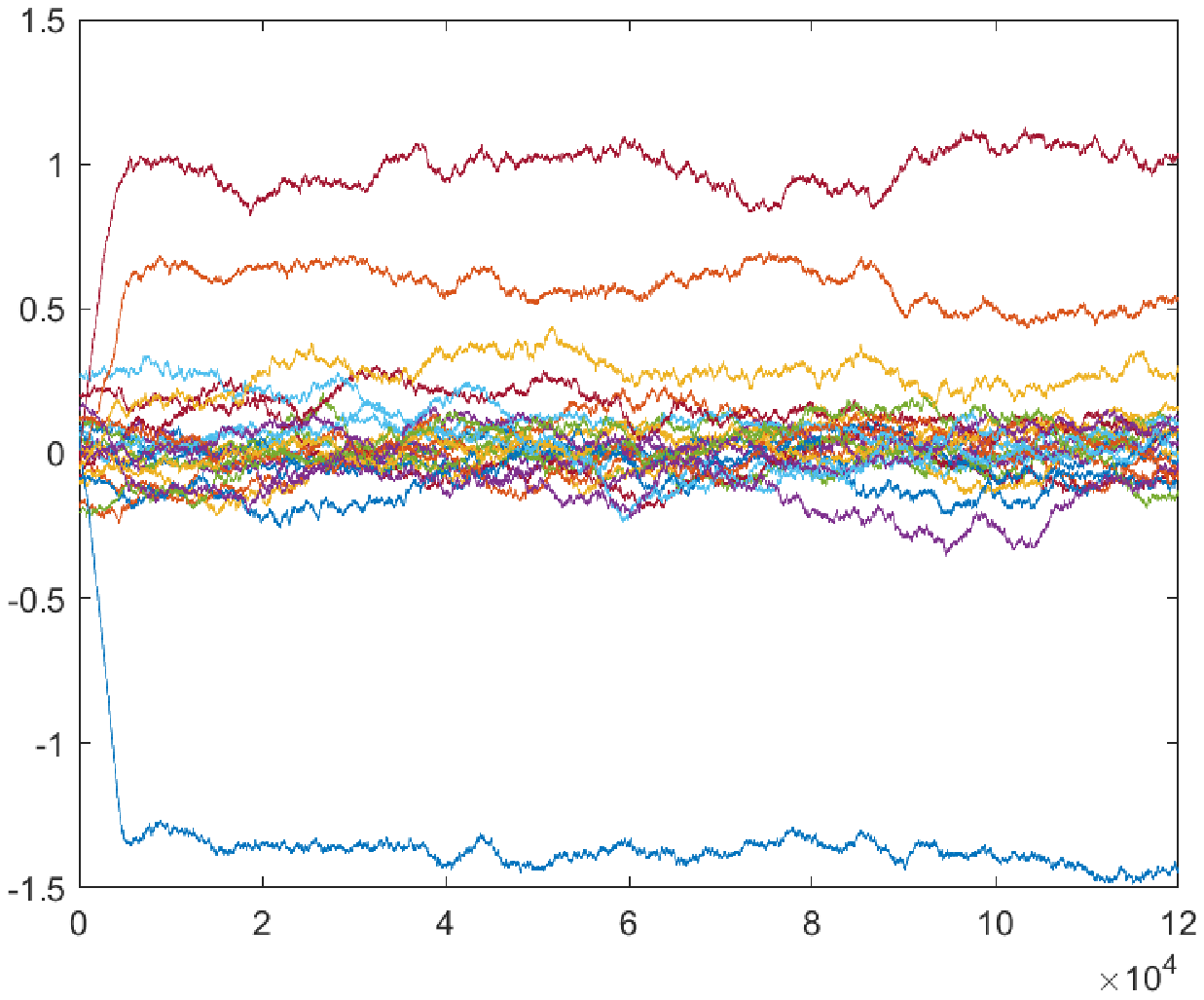}}&
\resizebox{0.3\textwidth}{!}{\includegraphics{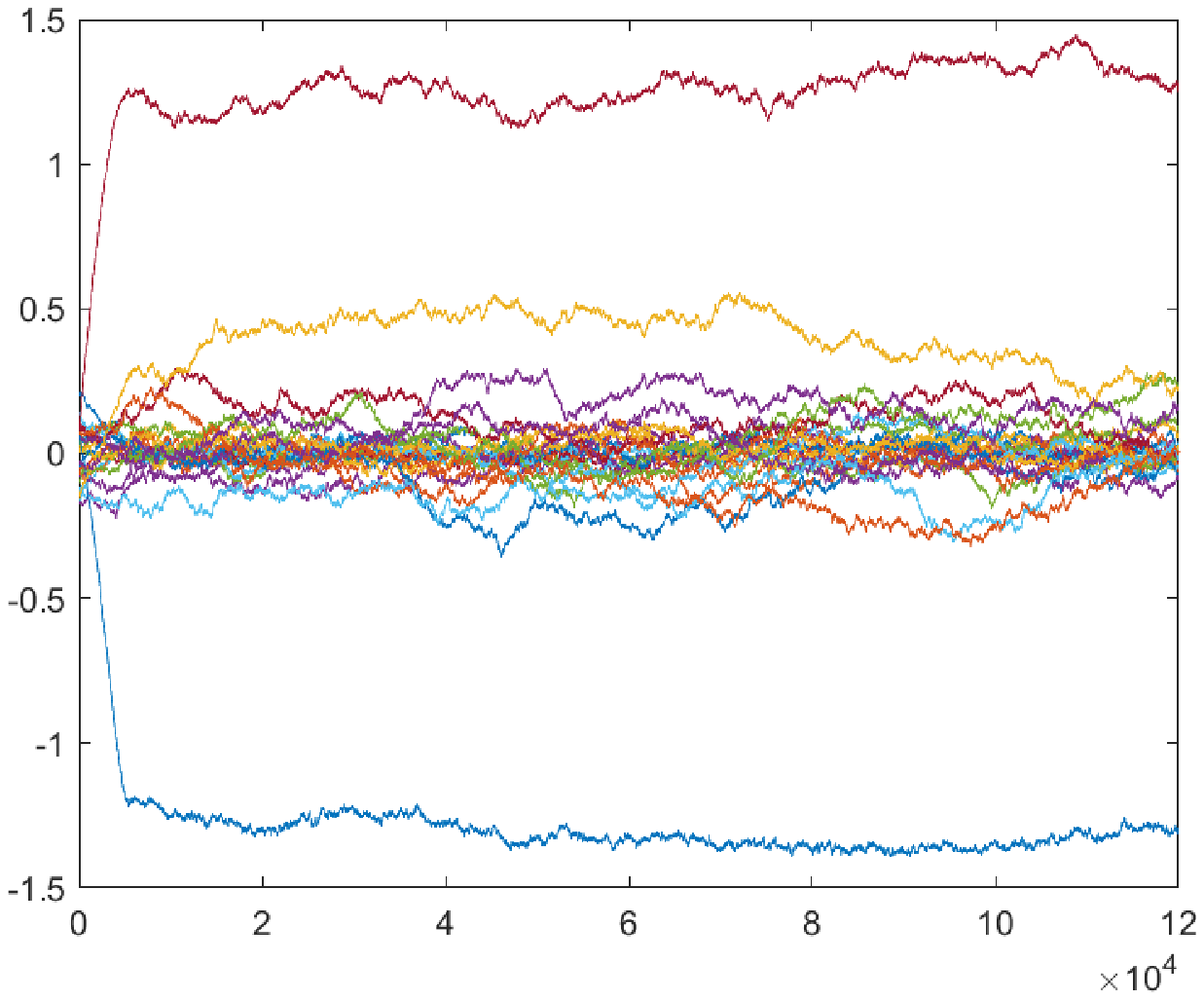}}&
\resizebox{0.3\textwidth}{!}{\includegraphics{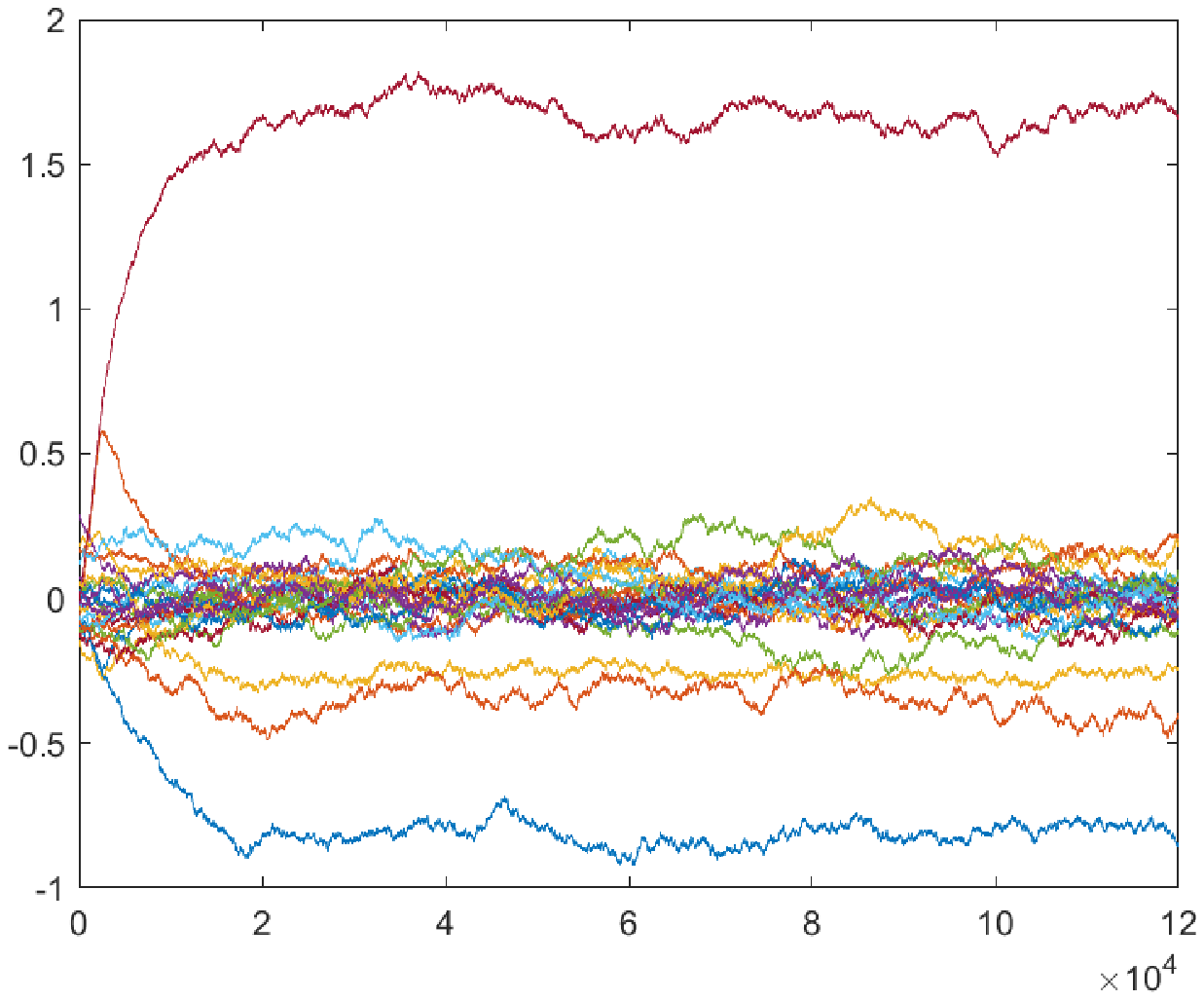}} \\
\resizebox{0.3\textwidth}{!}{\includegraphics{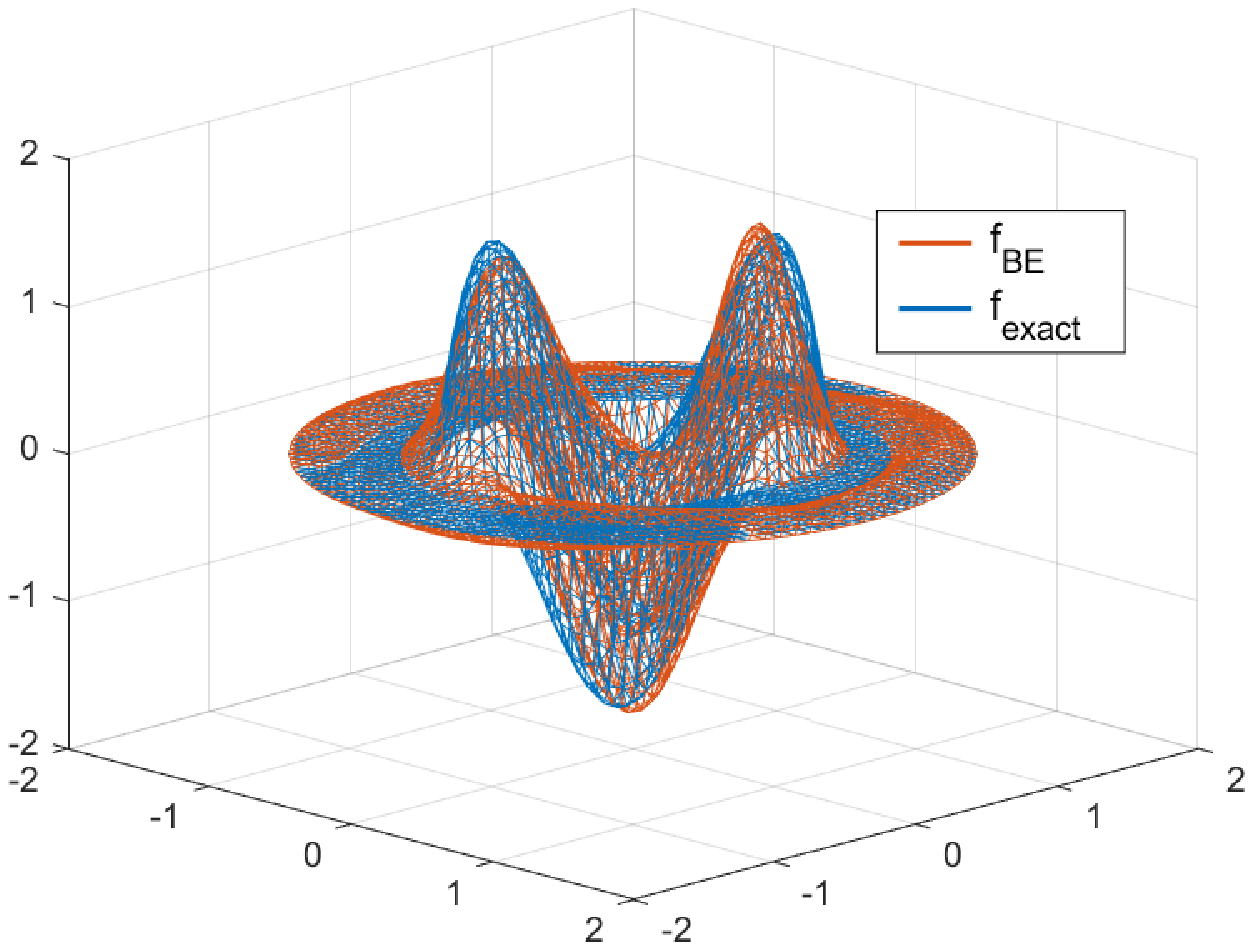}}&
\resizebox{0.3\textwidth}{!}{\includegraphics{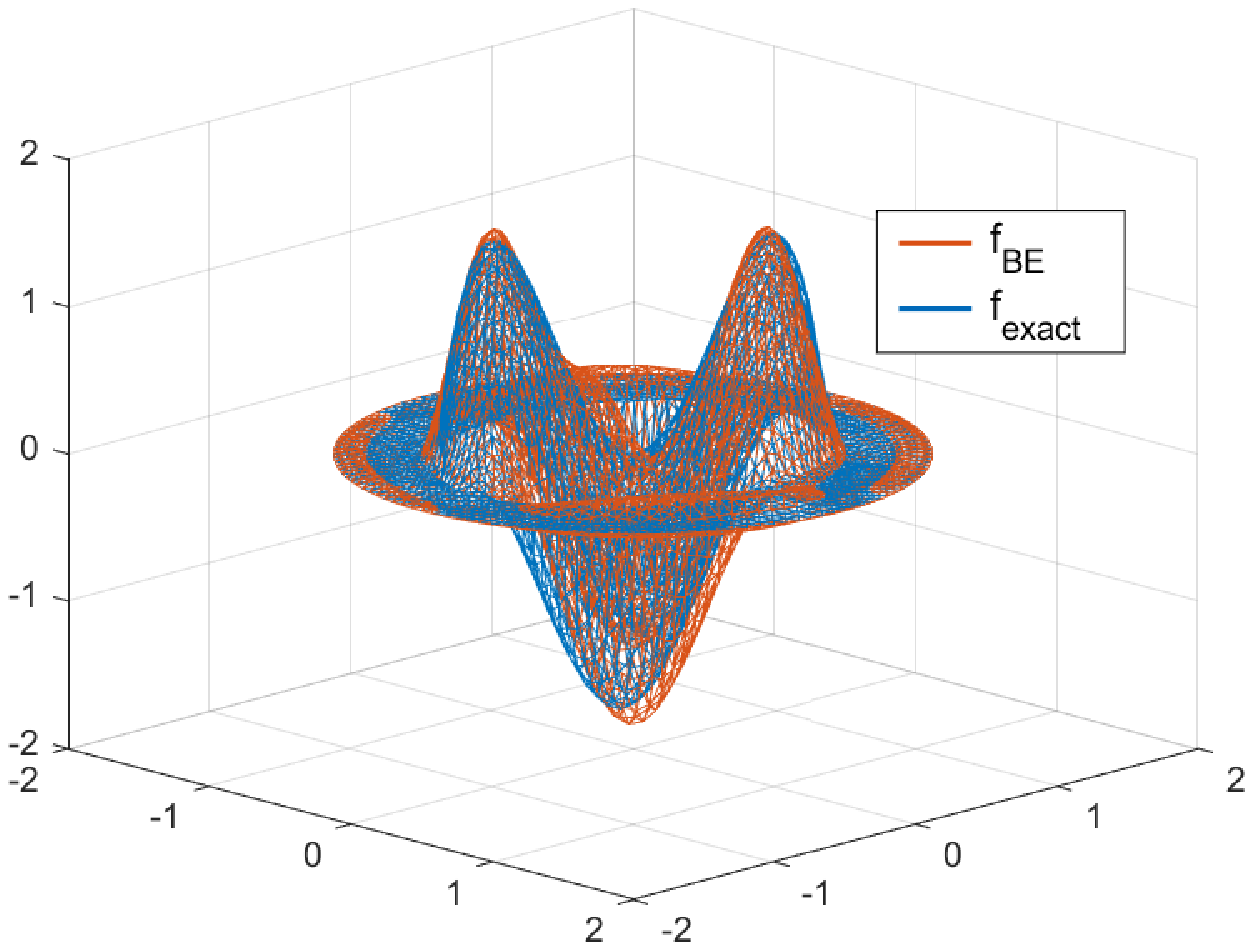}}&
\resizebox{0.3\textwidth}{!}{\includegraphics{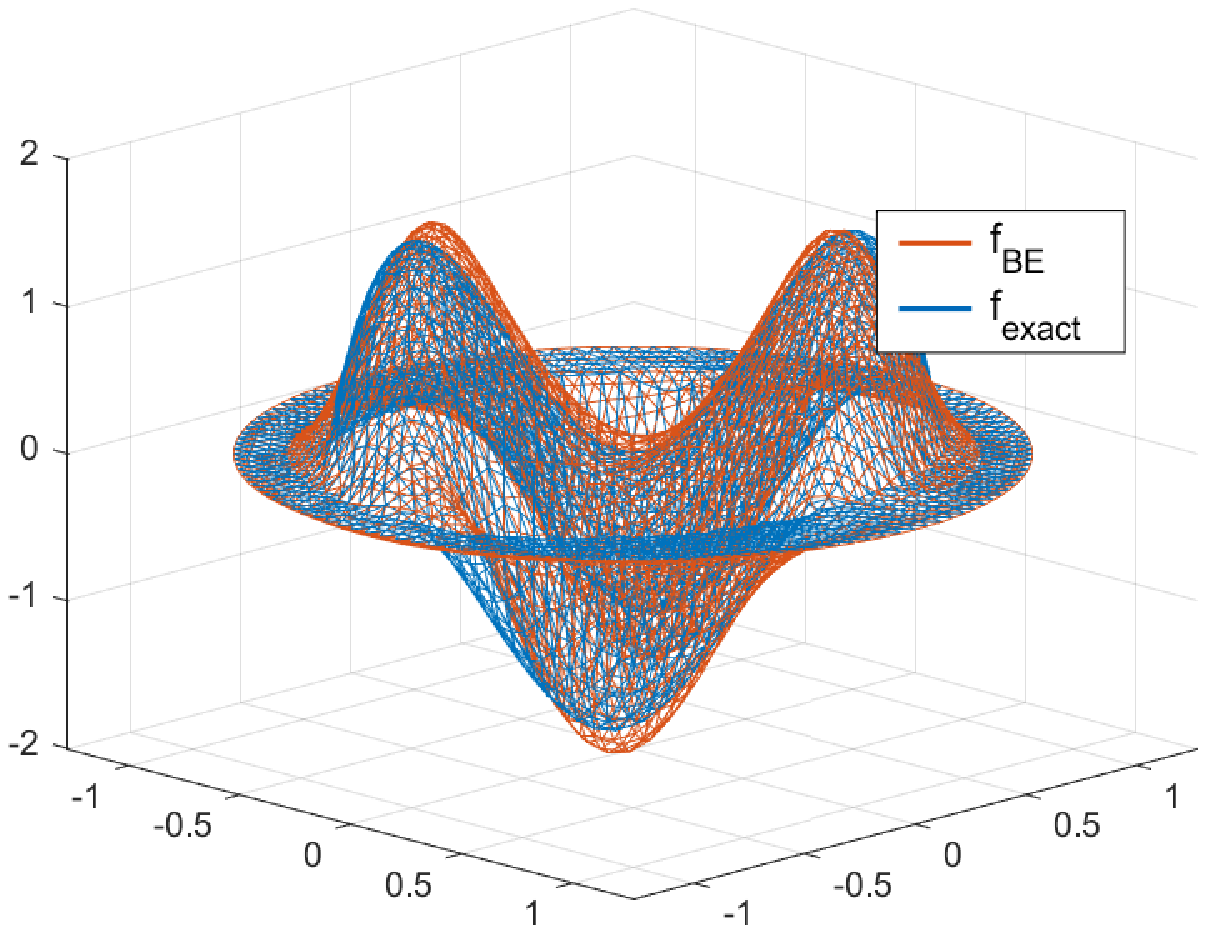}} \\
\end{tabular}
\end{center}
\caption{Example 4. First row: contour plots of the indicators for the DSM. Second row: the histograms of the coefficients for $f_{BE}$. Third row: the reconstructed $f_{BE}$ and exact $f$. Left column: $\Gamma_1$. Middle column: $\Gamma_2$. Right column: $\Gamma_3$.}
\label{fig:example4fig1}
\end{figure}


\vskip 0.2cm
\textbf{Example 5:} The last example is a discontinuous source function. Let
\[
f(x)=\chi_{(x_1^2+x_2^2<=0.81)}.
\]
The contour plots of the indicator functions by the DSM are shown in the first row of Fig.~\ref{fig:example5fig1} for $\Gamma_1$, $\Gamma_2$ and $\Gamma_3$. The radii of the reconstructed discs $\hat{B}$'s are $0.9849$, $1.2166$ and $1.1705$ listed in Table~\ref{table1}. The histograms of the coefficients are shown in the second row of Fig.~\ref{fig:example5fig1}. The reconstructed $f_{BE}$'s and the exact $f$ are shown in the third row of Fig.~\ref{fig:example5fig1}. The errors are listed in Table~\ref{table2}. The main features of the discontinuous source $f(x)$ such as the value and discontinuity are reconstructed well.

\begin{figure}[h!]
\begin{center}
\begin{tabular}{lll}
\resizebox{0.3\textwidth}{!}{\includegraphics{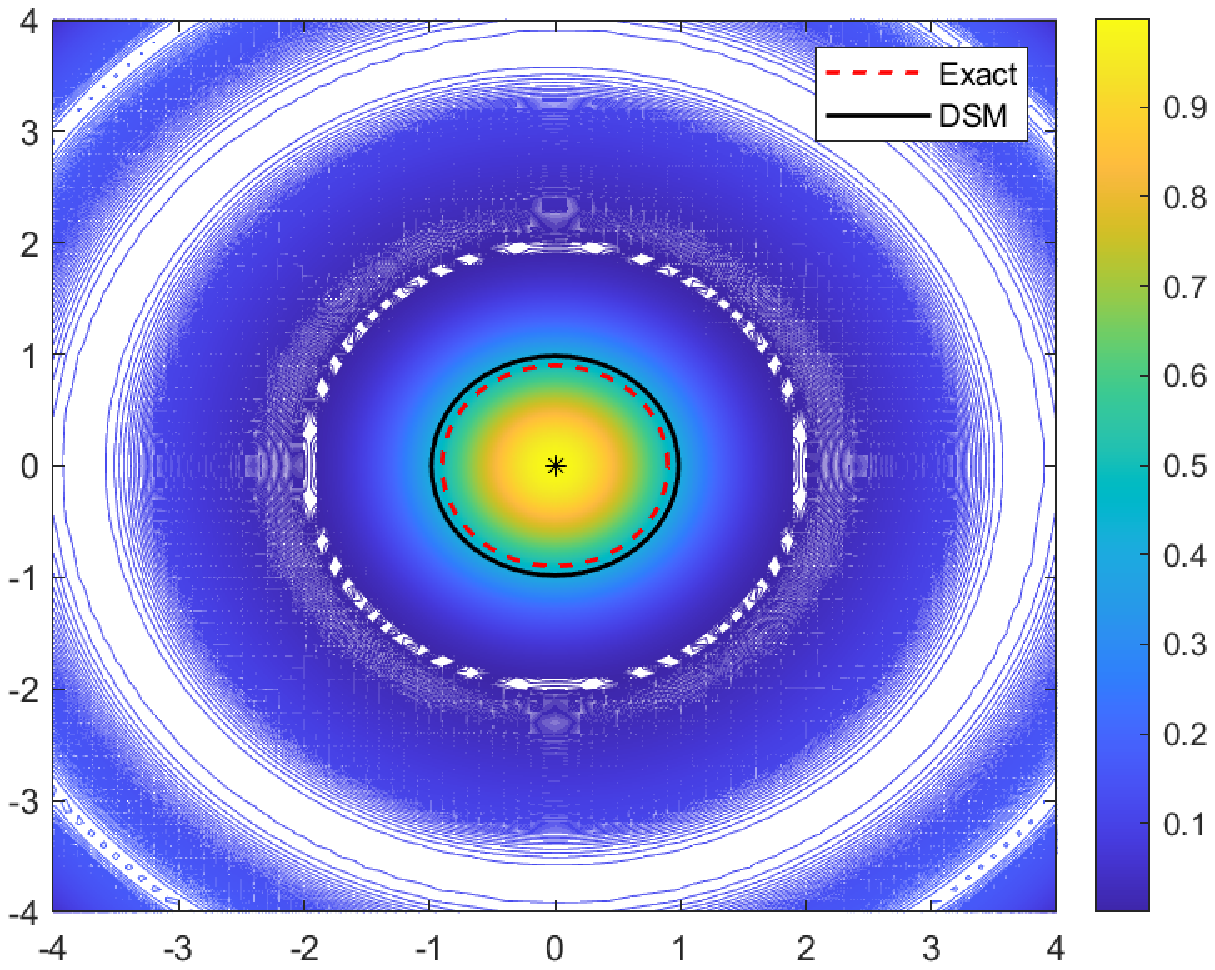}}&
\resizebox{0.3\textwidth}{!}{\includegraphics{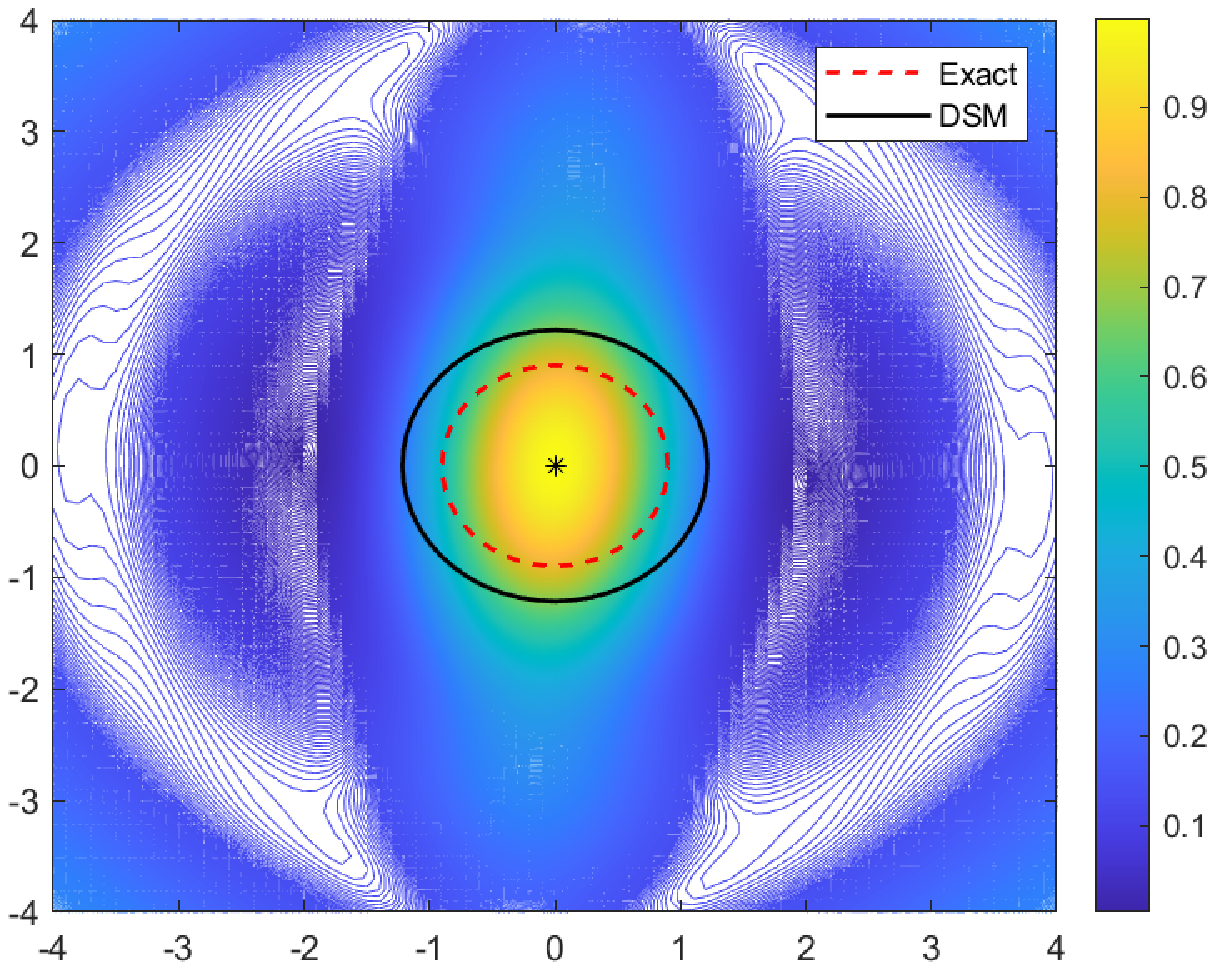}}&
\resizebox{0.3\textwidth}{!}{\includegraphics{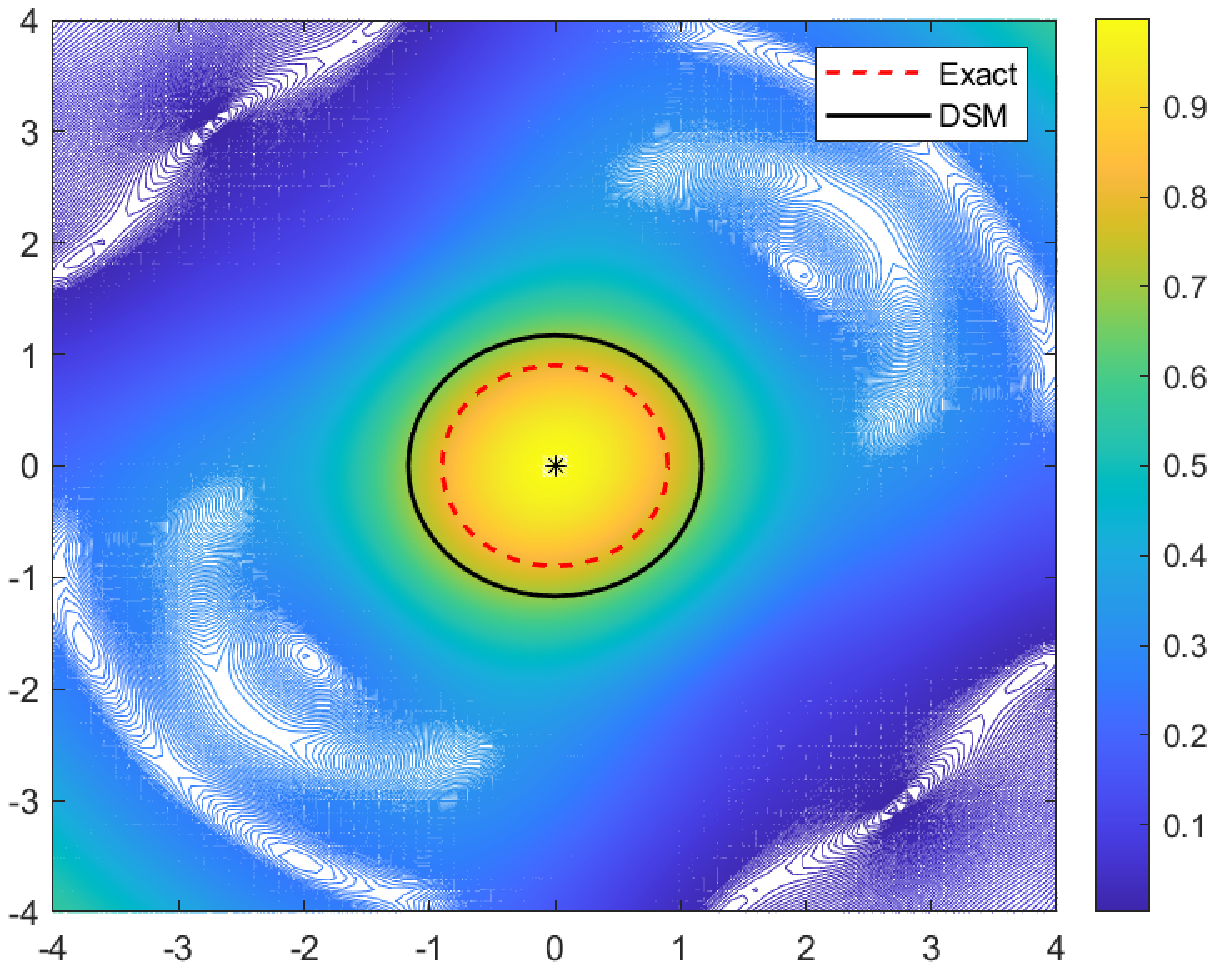}} \\
\resizebox{0.3\textwidth}{!}{\includegraphics{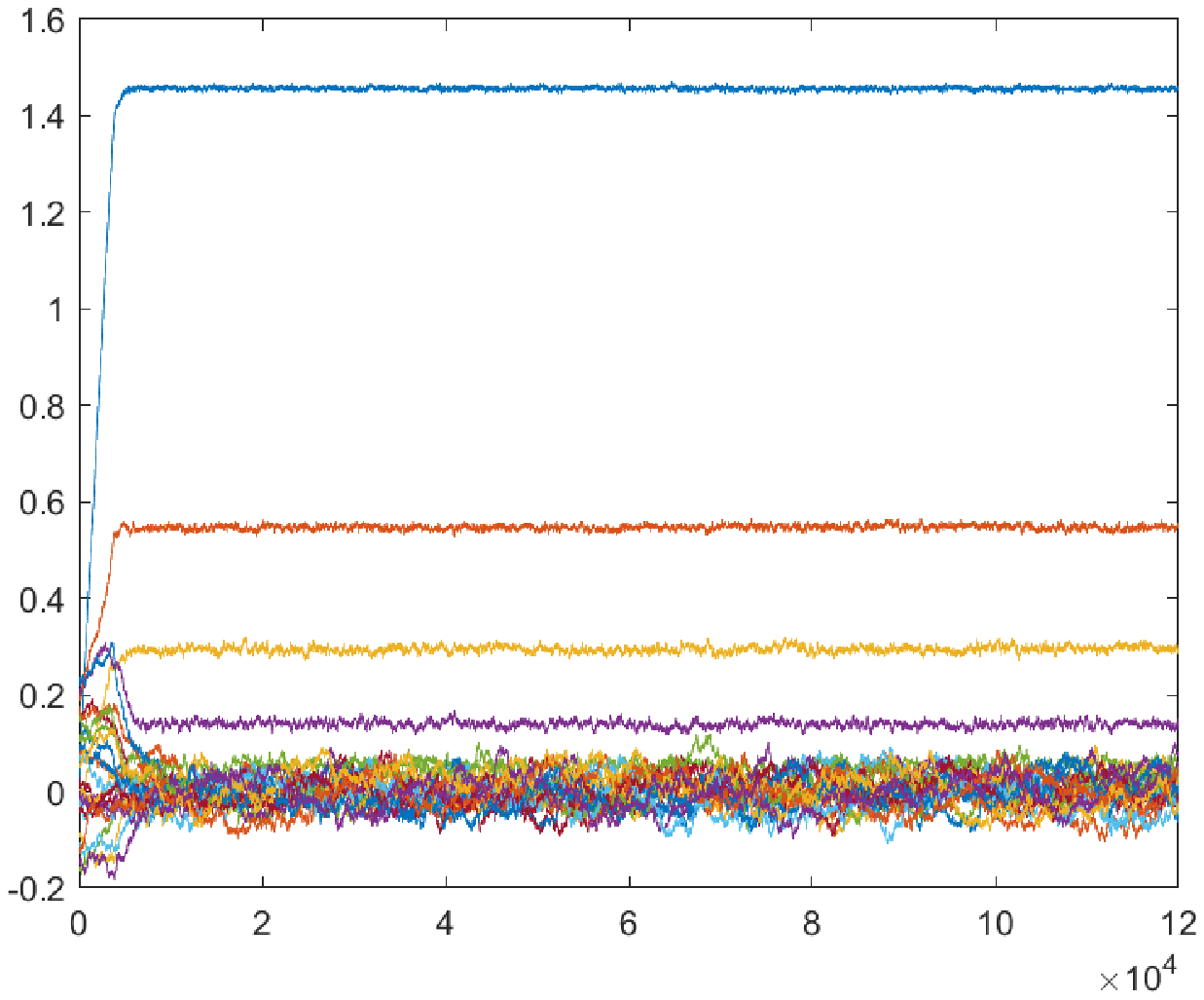}}&
\resizebox{0.3\textwidth}{!}{\includegraphics{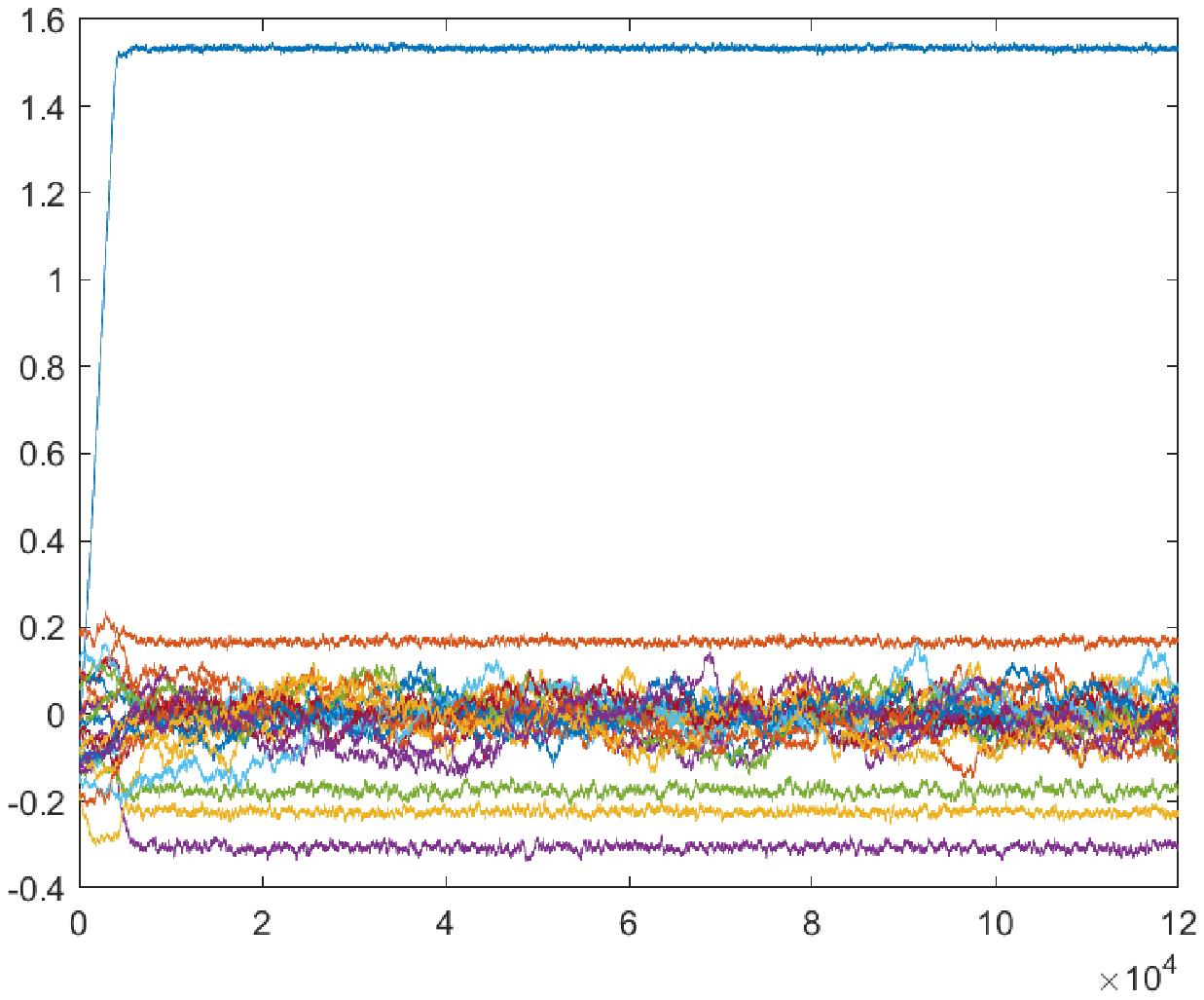}}&
\resizebox{0.3\textwidth}{!}{\includegraphics{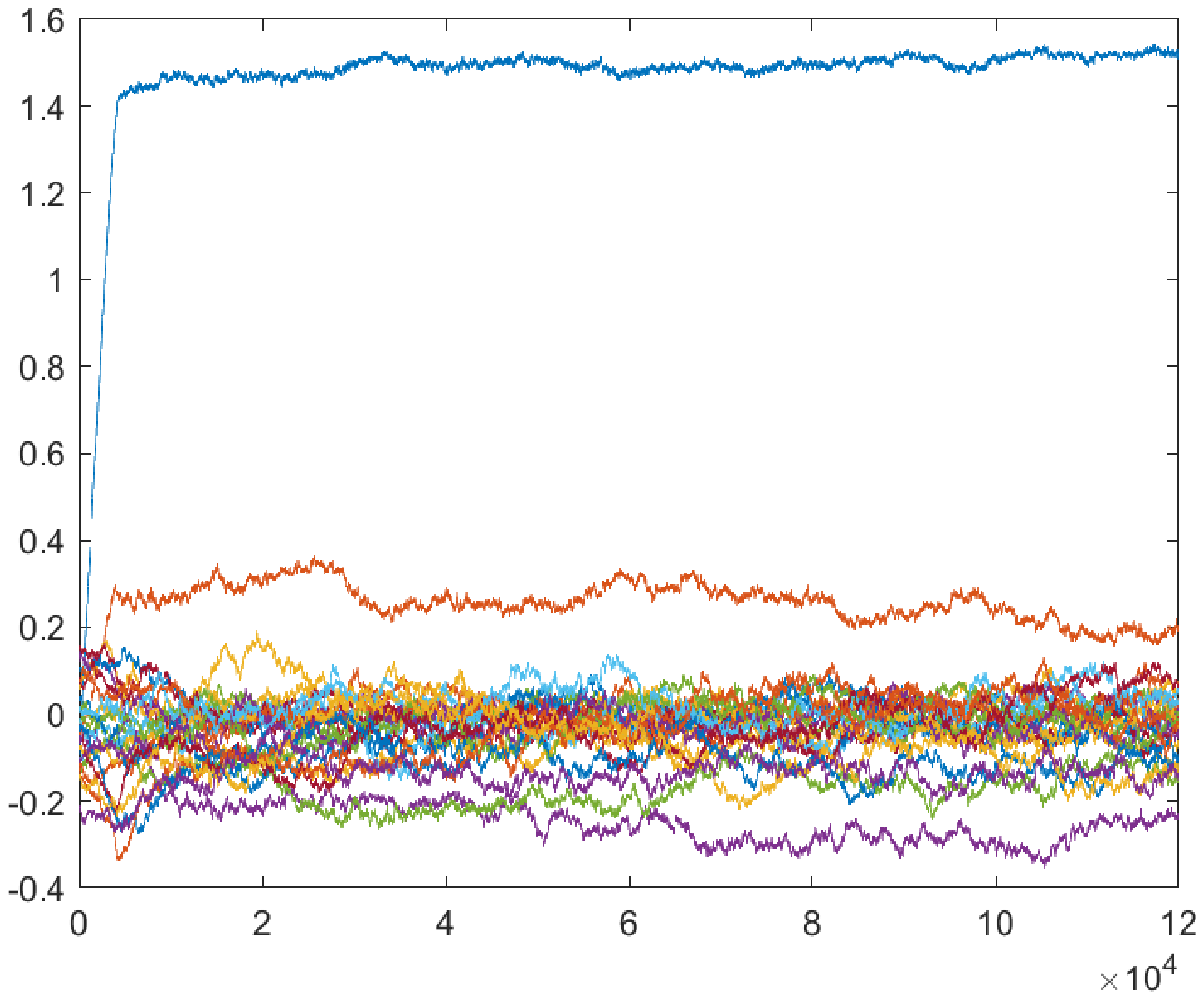}} \\
\resizebox{0.3\textwidth}{!}{\includegraphics{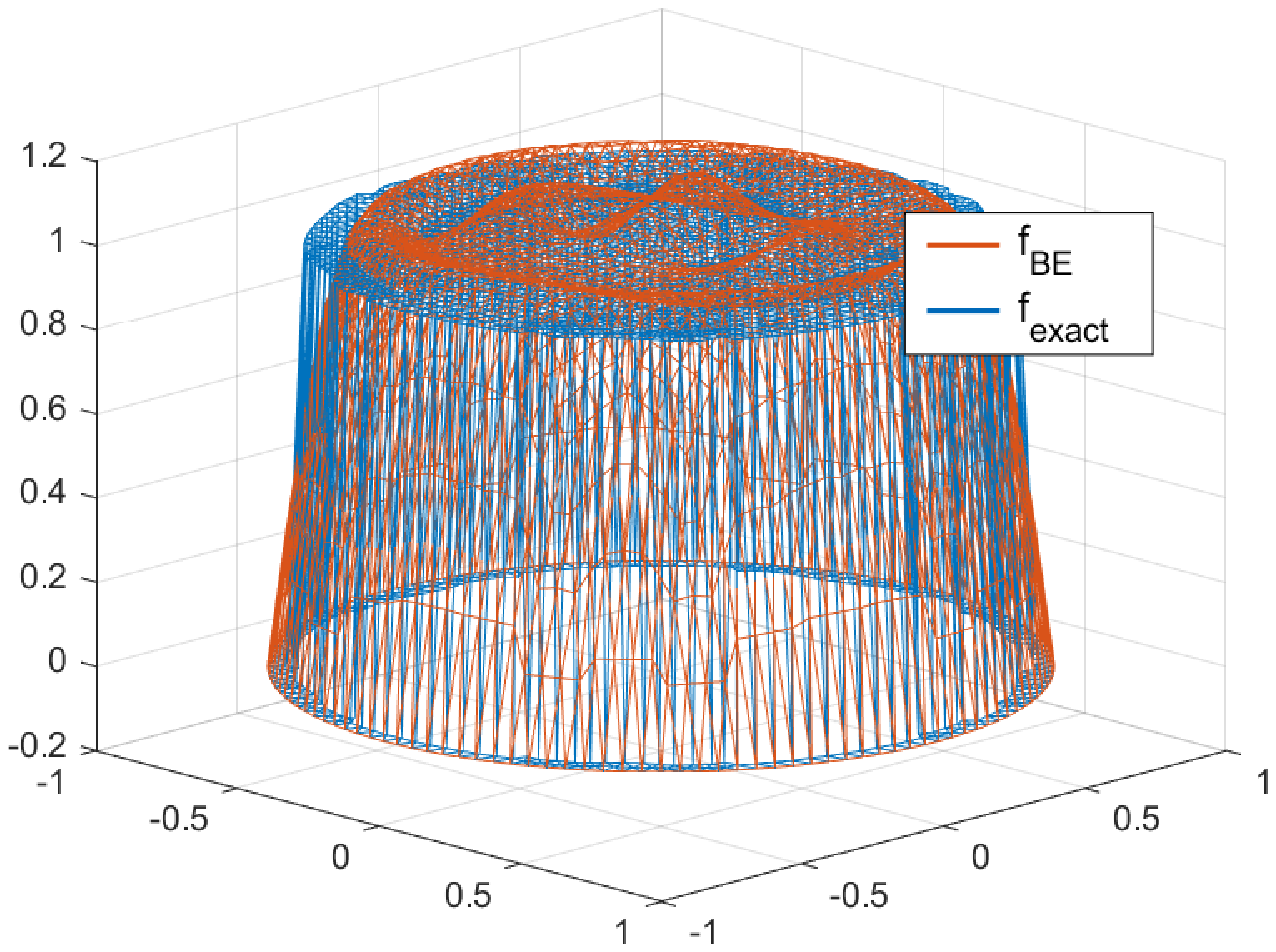}}&
\resizebox{0.3\textwidth}{!}{\includegraphics{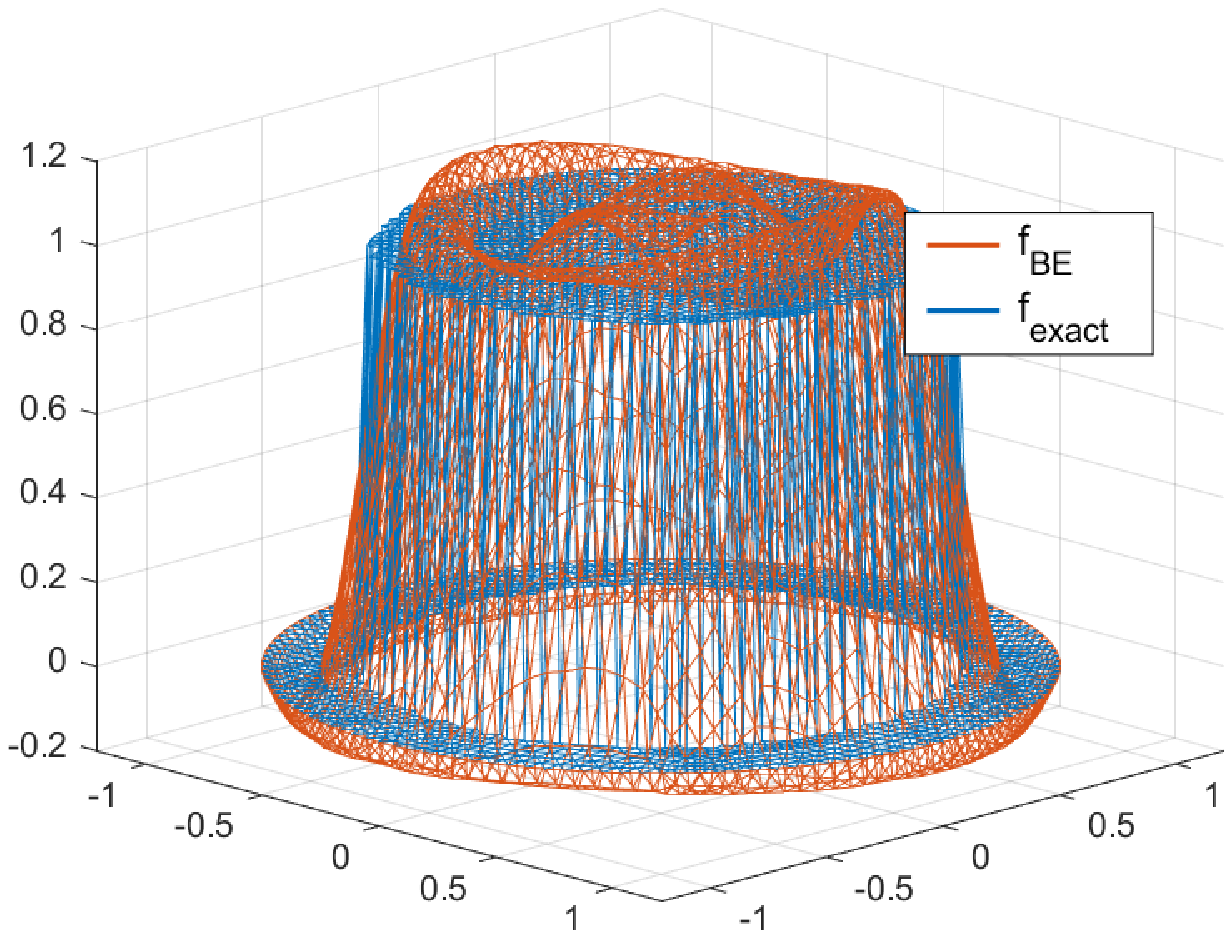}}&
\resizebox{0.3\textwidth}{!}{\includegraphics{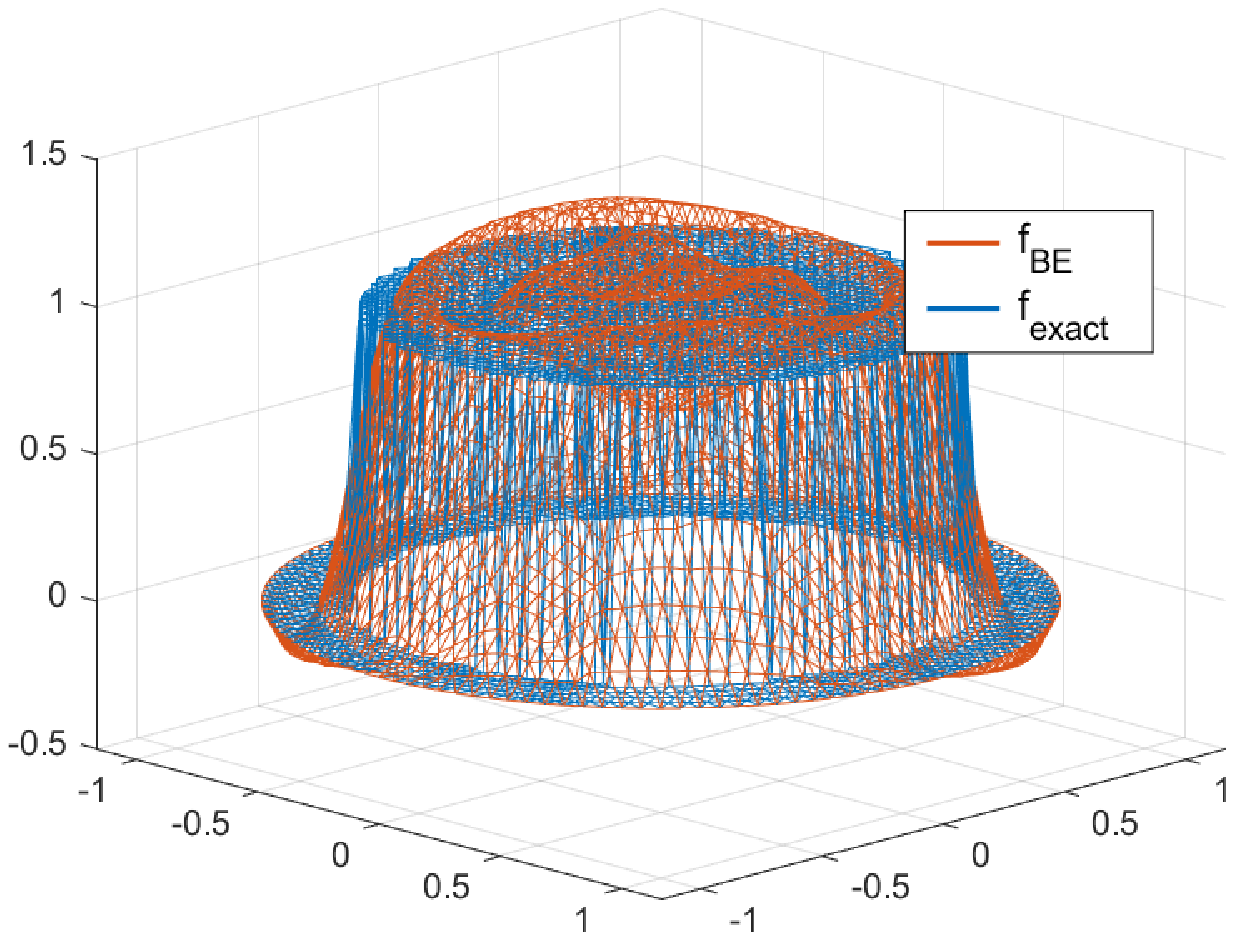}} \\
\end{tabular}
\end{center}
\caption{Example 5. First row: contour plots of the indicators for the DSM. Second row: the histograms of the coefficients for $f_{BE}$. Third row: the reconstructed $f_{BE}$ and exact $f$. Left column: $\Gamma_1$. Middle column: $\Gamma_2$. Right column: $\Gamma_3$.}
\label{fig:example5fig1}
\end{figure}

\section{Conclusions}
In this paper, we combine the DSM and Bayesian approach to reconstruct an extended source using the multiple frequency limited aperture far field data. In the first step, the DSM is used to obtain an approximation of the compact support (a disc) of the source. Using the eigenfunctions of the disc, we expand the source and employ the Bayesian inverse to recover the expansion coefficients.

Numerical examples, including a discontinuous source function, show the effectiveness of the proposed method. It is observed that as the aperture becomes smaller the reconstruction error increases. Nonetheless, the results are satisfactory for limited aperture data.

The cutoff value for the indicator function of the DSM is chosen by trial and error. We are investigating other methods to avoid choosing ad-hoc cutoff values. Algorithms that can improve the acceptance rate of the samplings in the MCMC method are also worth efforts to improve efficiency. Another interesting topic is the case when the source function is also frequency dependent, i.e, $f$ depends on $k$ as well.

\section*{Acknowledgements}
\noindent
The research of ZZ is supported by Hong Kong RGC grant (project 17307921), National Natural Science Foundation of China (project 12171406), and a seed funding from the HKU-TCL Joint Research Center for Artificial Intelligence.




\end{document}